\def\BibTeX{{\rm B\kern-.05em{\sc i\kern-.025em b}\kern-.08em
		T\kern-.1667em\lower.7ex\hbox{E}\kern-.125emX}}
\begin{document}
	\title{Distributed Fractional Bayesian  Learning for Adaptive Optimization}
	\author{Yaqun Yang, Jinlong  Lei ({\it Member, IEEE}), Guanghui Wen ({\it Senior Member, IEEE}),  and Yiguang Hong ({\it Fellow, IEEE})
		\thanks{The paper is sponsored by the National Key Research and Development Program of China under No 2022YFA1004701,  the National Natural Science Foundation of China under  No. 62373283, and the Fundamental
			Research Funds for the Central Universities, and partially by Shanghai Municipal Science and Technology Major Project No. 2021SHZDZX0100, and National Natural Science Foundation of China (Grant No. 62088101). }			
			\thanks{Yaqun Yang is with the Department of Control Science and Engineering, Tongji University, Shanghai, 201804, China. (email: yangyaqun@tongji.edu.cn) }
  	\thanks{Jinlong Lei  and  Yiguang Hong are with the Department of Control Science and Engineering, Tongji University, Shanghai, 201804, China;  also with State Key Laboratory of Autonomous Intelligent Unmanned Systems,  and Frontiers Science Center for Intelligent Autonomous Systems, Ministry of Education, and the Shanghai Institute of Intelligent Science and Technology, Tongji University, Shanghai, 200092, China. (email: leijinlong@tongji.edu.cn, yghong@iss.ac.cn)}
    	\thanks{Guanghui Wen is with the Department of Systems Science, School of Mathematics, Southeast University, Nanjing, 211189, China. (email: ghwen@seu.edu.cn)}}
	
	\maketitle
	\begin{abstract}
		  This paper considers a  distributed  adaptive optimization problem, where all agents  only have  access to their local cost functions  with a common unknown parameter, whereas they mean to collaboratively estimate the true parameter  and find the optimal solution over a connected network.   A general mathematical framework for such a problem has not been studied yet. We aim to provide valuable insights for addressing parameter uncertainty in distributed optimization problems and simultaneously find the optimal solution. Thus, we propose a novel distributed  scheme,  which utilizes distributed fractional Bayesian learning
		  through weighted  averaging on  the log-beliefs
		  to update  the  beliefs of  unknown parameter, and  distributed gradient descent for renewing the estimation of  the optimal solution.  Then under suitable assumptions,  we prove that all agents'  beliefs and    decision variables  converge almost surely to  the true parameter  and the optimal solution under the true parameter, respectively. We further establish   a sublinear  convergence  rate  for the belief sequence. Finally, numerical experiments are implemented to corroborate the  theoretical analysis.
	\end{abstract}
	
	\begin{IEEEkeywords}
		Fractional Bayesian Learning, Distributed Gradient Descent, Consensus Protocol, Multiagent System.
	\end{IEEEkeywords}
	\section{Introduction}
	\subsection{Backgrounds and Motivations}
	Distributed optimization has been widely used for modeling and resolving cooperative decision-making problems in large-scale multi-agent systems including economic dispatch, smart grids, automatic controls, and machine learning (see e.g., \cite{ML1, ML2}).  However, in many complex situations, agents need to make decisions with uncertainty. For example, in Robotics, planning the task for a robot requires predicting other agents' reactive behaviors which might be unknown at the very beginning \cite{trautman2010unfreezing}, while autonomous vehicles need to interpret the intentions of others and make trajectory planning for itself\cite{ chahine2023intention}. 
	Also, for the Markowitz profile problem, one should learn the uncertain parameters of expectation or covariance matrices associated with the stocks model, and then find the best solution to the optimal portfolio \cite{cornuejols2006optimization}.  These challenges together motivate us to investigate distributed decision-making problems involving model uncertainty.

	Generally speaking, the resolution of decision-making problems with model uncertainty consists of two processes: model construction and decision making\cite{wilson2018adaptive}, i.e., agents need to estimate the unknown model function (the classical setting is characterized by known function structure while with unknown parameters) and find the optimal solution to it.  The commonly used approaches include the sequential and simultaneous methods. However, a sequential method that considers {\it optimization after prediction}  may not be applicable to complex decision-making scenarios, since large-scale parameter learning problems lead to a long waiting time for solving the original problem. Besides, as has been analyzed in \cite{oco},  this scheme provides an approximate solution to model parameters, which propagates the corrupt error into the objective optimization. In some practical scenarios,  {\it optimization after prediction}
	may lead to a ``frozen robot" problem as pointed out in \cite{le2021lucidgames}. Therefore, developing dynamic learning coupled algorithm that consider  {\it prediction while optimization} is crucial and has gained increasing popularity in recent years, see e.g., \cite{best2015bayesian,le2021lucidgames,oco}.

	It is noticed that the aforementioned works  \cite{trautman2010unfreezing, chahine2023intention, cornuejols2006optimization,best2015bayesian,le2021lucidgames} investigate the coupled phenomenon between model construction and decision making in specific scenarios and develop corresponding methods. Whereas the previously studied theoretical works
	mostly focus on the centralized problem with parameter uncertainty, and merely consider  the unidirectional coupling  of optimization and prediction where the estimation of the model parameter is independent of decision making\cite{jiang2016imperfect,mislagrange,miscen,distributedmis}. Moreover,  few of them have investigated the large-scale distributed scenarios.
In our work, we focus on the \textit{prediction while optimization} loop and rigorously derive the convergence of both model parameter estimation and optimal decision in large-scale distributed scenario. 
	

We  consider  the bidirectional coupling of parameter learning and objective optimization, which brings more difficulties to the resolutions along with theoretical analysis. Though there exist some related works, most of them assume that the unknown parameter influences the objective function in a specific structure. For example,  \cite{personalizedgra_traking} considers a distributed quadratic optimization problem with the unknown model parameter being the objective coefficients, and  adopts  the recursive least square  to estimate the parameter and gradient tracking to solve the objective optimization.
While the work \cite{personalizedgra_traking} imposes some assumptions on the intermediate process, which however is lack of strict theoretical verification. As a result, bidirectional coupling optimization problem has   not  been fully resolved here.
 In addition, \cite{guolei} uses weighted least square  to solve the unknown coefficient matrices in linear-quadratic stochastic differential games.  Our work is different from those problems which impose   particular structures on objective functions.
 
%
	
	%
		\vspace{-5pt}
		\subsection{Problem Formulation and Challenges}
		We characterize the uncertainty of the distributed optimization problem  in a parametric sense. Our primary objective is to establish the model parameters in a way that the action generated from this estimated model best matches the observed action, meanwhile, find this best-estimated model's optimal solution.
			To be specific,  we consider a distributed optimization problem with unknown model parameter $\theta$ as follows.
		\begin{align}
			&\min_{x\in \mathbb{R}} ~\frac{1}{N}\sum_{i=1}^{N} J_i(x,\theta_*), \theta_*\in \Theta, \label{problem}
		\end{align}
		where $J_i(x,\theta_*)$ represents the private cost function of  agent $i\in\mathcal{N}:=\{1,2,\cdots,N\}$. The unknown true parameter $\theta_*$  is taken  from a finite set $\Theta:=\{\theta_1,\theta_2,\cdots,\theta_M\}$. Note that we do not restrict the form of the parameters $\theta$ within the abstract function $J(x, \theta)$.  This flexibility allows our model to be applicable to a broader range of problems, enhancing its generality and adaptability. 
	
		Each agent $i\in \mathcal{N}$  has a prior belief $q_i(\theta_m), m=1,2,\cdots, M$ of the $M$ possible parameters. Given an input strategy $x$, the feedback is realized randomly from a probability distribution depending on  the system's true parameter, i.e. the noisy feedback $y_i=J_i(x,\theta_*)+\epsilon_i$ for every agent $i$.  
			 Let $f_i(y_i|x,\theta_m)$ denote the likelihood function (also called probability density function here) of observation $y_i$ for any strategy $x\in \mathbb{R}$ under parameter $\theta_m\in \Theta$. For example, if  $\epsilon_i\sim N(0, \sigma^2)$, the likelihood function
			 of observation $y$ under input $x$ and parameter $\theta_m$ should be
			 \begin{align*}
			 	f_i(y_i|x,\theta_m)=\frac{1}{\sigma\sqrt{2\pi}}\exp\left[-\frac{(y_i-J_i(x,\theta_m))^2}{2\sigma^2}\right].
			 \end{align*} 
		This setting is aligned with \cite[Example 3]{wu2020multi} and \cite[Section II]{nedic2017fast}.
		


%
			
			Though each agent only knows its local information, it can interact with other agents over a fixed connected network $\mathcal{G}=\{\mathcal{N},\mathcal{E}, W\}$ in which $\mathcal{N}=\{1,2,\cdots,N\}$ is the set of agents.   Herein, $\mathcal{E}\subseteq N\times N$ represents the edges of network, where $(i,j) \in \mathcal{E}$ if and only if agents $i$ and $j$ are connected. Each agent $i$ has a set of neighbors $\mathcal{N}_i=\{j|(i,j)\in\mathcal{E}\}$. $W=[w_{ij}]_{N\times N}$ denotes the weighted adjacency matrix, where $w_{ij}>0$  if $j\in \mathcal{N}_i$ and $w_{ij}= 0$ otherwise.
 The agents want to collaboratively  solve the problem \eqref{problem},  namely, simultaneously
find the true parameter $\theta_*\in \Theta$ and the optimal solution $x_*$ to the global objective function.
		
There are several challenges to solving this problem. Firstly, we need to develop a fully distributed strategy
based on local information and local communication. This approach is significantly more challenging than dealing with centralized issues like \cite{jiang2016imperfect,mislagrange,miscen}. Secondly,  given the parameter uncertainty in the objective optimization problem,
since the sequential method cannot attain an exact solution, we need to design a scheme that simultaneously estimates the parameter and find the optimal solution.  Thirdly, the process of simultaneously learning and optimizing the objective function is coupled in both directions, and the existence of stochastic noises will bring about difficulties in the rigorous theoretic analysis of the designed scheme.
	All in all, these challenges highlight the complex and dynamic nature of addressing such problems.
		This paper  addresses   all the aforementioned challenges associated with  the problem \eqref{problem}, and will summarize the main contributions
in section \ref{contribution}.
		%
		
		\subsection{Related Works}
		
		The theory of distributed optimization has been extensively studied, including 
		convex or non-convex objective functions, smooth or non-smooth conditions, static or time-varying networks (see e.g.,\cite{nedic2009distributed,zhu2011distributed}).  By 2020,  survey  papers related to this field have appeared one after another like \cite{survey2, yang2019survey}.
Most of the  distributed optimization works considered   precisely known objective functions, while seldom of them have  investigated the model uncertainty.

			In  recent years, the problems with both unknown parameter learning and objective optimization have gradually attracted research attention.
			One widely adopted methodology for handling external disturbances and noise is robust optimization, which accounts for uncertainties by optimizing system performance under the worst-case realization of unknown parameters. For instance, \cite{li2021distributionally,li2023distributionally} present centralized robust optimization frameworks, while \cite{fu2025cutting} investigates a distributed formulation.  In comparison,  we consider the problem from a different perspective and propose
			a dual proactive process which contains both the inference of uncertain parameters and the optimization of the objective function. There are some related works.
			For example,  \cite{ahmadi2020resolution} presented a coupled stochastic optimization scheme to solve problems with imperfect information.   \cite{oco} introduced a method to optimize decisions in a dynamic environment, where the model parameter is unavailable but may be learned by a separate process called Joint estimation-optimization.  In addition, \cite{jiang2016imperfect,mislagrange,miscen} considered centralized mis-specific convex optimization problems $f(x,\theta)$, where the unknown parameter $\theta$ of objective is a solution to some learning problem $l(\theta)$.  To be specific,  \cite{jiang2016imperfect} and \cite{miscen} both used the gradient descent method to solve the parameter learning problem and objective optimization problem under deterministic optimization and stochastic optimization scenarios respectively, whereas \cite{mislagrange}  investigated an inexact parametric augmented Lagrangian method to solve such a problem. However, the aforementioned {\it prediction while optimization} works are centralized schemes and unidirectional coupling,
			i.e. the  objective optimization depends on parameter learning while the parameter learning problem is independent of objective optimization. Although distributed coupled optimization has also been investigated, for example,  \cite{distributedmis} proposed a  distributed stochastic optimization with imperfect information and  \cite{personalizedgra_traking} presented a distributed problem with a composite structure consisting of an exact engineering part and an unknown personalized part.  However,  \cite{distributedmis}  still  focused on unidirectional coupling, while   \cite{personalizedgra_traking} imposed some assumptions on the the intermediate process.

%
		
		
		It is worth noting that  the coupling between parameter learning  and equilibrium searching   have been little investigated in the field of game theory. For example, \cite{huang2023distributed} considered parameter learning and decision-making in game theory and developed a non-Bayesian method for parameter estimating.   Moreover, \cite{wu2020multi} examined the learning dynamics influenced by strategic agents engaging in multiple rounds of a game with an unknown parameter that affects the payoff, although this paper operates under the centralized scheme.
		
		Inspired by \cite{wu2020multi}, we consider using a Bayesian type scheme to learn the model parameter.  Bayesian inference is widely used in belief updating of uncertainty parameters\cite{bissiri2016general,wu2020multi}.   The standard Bayesian method fully generates past observations to update the parameter estimation. In comparison, 
		fractional Bayesian methods have emerged as a powerful tool in statistical inference and machine learning, offering robustness to model misspecification and enhanced flexibility in handling complex data structures \cite{krog2018bayesian,erazo2024parameter}. These methods modify the traditional Bayesian framework by incorporating fractional likelihood functions, which can significantly improve the performance of inference under model uncertainty \cite{molavi2017foundations,epstein2010non,bhattacharya2019bayesian}.
		In addition, 
		The authors of \cite{lalitha2018social} have shown that in distributed learning,  the fractional Bayesian inference with distributed log-belief consensus can get a fast convergence rate.  As such, we consider this variation of Bayesian inference to estimate the unknown parameter of our problem. As for the adaptive optimization method with the objective function computing, we consider the classical distributed gradient descent \cite{pu2021sharp}, which also has good performance in convex optimization.  
	
Furthermore, in order to clearly explain how our proposed framework differs from the most related existing approaches, we provide Table \ref{tab}.
	\begin{table*}[h]
		\footnotesize
		\caption{Work comparison with previous studies of \textit{Prediction while Optimization} settings}	\label{tab}
		\begin{adjustbox}{center}
			\begin{tabular}{|c|c|c|}
				\hline
				Problem & Algorithm & Explanations \\
				\hline 
				\makecell{$\min_{x} f(x,\theta_*), \theta_*\in \arg\min_{\theta} l(\theta)$} & \makecell{\textbf{Parameter prediction:}\\$
					\theta_{k+1}=\theta_{k}-\alpha \nabla l(\theta_{k})$\\
					\textbf{Decision optimization:}\\$x_{k+1} = x_k -\beta \nabla_x f(x_k,\theta_{k})$}  & \makecell{Unidirectional coupling algorithm; \\some works on centralized scenario \cite{jiang2016imperfect,mislagrange,miscen},\\ few works on distributed scenario \cite{distributedmis,mywork1}, \\while  distributed scenario uses linear consensus. } \\ 
				\hline
				\makecell{$\min_x \frac{1}{2}x^T P x+q^Tx +r$\\ with unknown $P, q,r$;\\
					Realized data  $(x_s, y_s)_{s=1}^t$, where \\$ y_s=\frac{1}{2}x_s^T P x_s+q^Tx_s +r+ \epsilon$} &  \makecell{\textbf{Parameter prediction:}\\ Use data $(x_s, y_s)_{s=1}^t$ to do recursive least square\\
					\textbf{Decision optimization:} \\Gradient descent based on current \\ parameter $\{\hat{P}_t, \hat{q}_t, \hat{r}_t\}$ } & \makecell{Only for quadratic problem,\\ so that use least square for parameter\\ learning  \cite{personalizedgra_traking, guolei}} \\
				\hline 
				\makecell{\textbf{Our work: } \\
					$\min_x \frac{1}{N}\sum_{i=1}^{N} J_i(x,\theta_*)$\\ $\theta_*\in \{\theta_1,\theta_2,...,\theta_M\}$}
				& \makecell{\textbf{Parameter prediction:} \\ Use feedback data do fractional Bayesian learning \\
					\textbf{Decision optimization:} \\Gradient descent based on average function\\ of all possible parameter}  &  \makecell{Applicable to a broader range of problems \\ instead of special objective structure; \\Bidirectional  coupling algorithm bring difficulties; \\ Consensus averaging on a \\ reweighting of the log-belief.}\\
				\hline
			\end{tabular}
		\end{adjustbox}
	\end{table*}
		
		\subsection{Main Contribution}
		\label{contribution}
To solve the distributed optimization problem  \eqref{problem} with  unknown parameter $\theta_*$, we design an efficient algorithm and give its convergence analysis. Below are our contributions.
		\begin{enumerate}
			\item We propose a general mathematical formulation for distributed  optimization problem with parameter uncertainty.
The formulation  models the bidirectional coupling between parameter learning and objective optimization.
 Though there   has been  a few research on some  practical applications, the general mathematical model has not been abstracted and studied yet. Thus, our formulated model can expand  upon prior theoretical  works with known objective functions, and the type with  fixed model structure influenced by unknown parameter which however   is independent of the  objective computation.
			\item We design a novel distributed fractional Bayesian learning dynamics and adaptive optimization algorithm, which considers model construction and decision-making simultaneously in the {\it Prediction while Optimization} scheme.  To be specific, we use fractional Bayesian learning for updating beliefs of  the unknown parameter, which adopts a distributed consensus protocol that averages on a reweighting of the log-belief for the belief consensus.  This is more reasonable  and robust than standard Bayesian learning, and the  belief consensus protocol   is shown to be faster than the normal distributed linear consensus protocol by experiment.
We then utilize  the distributed gradient descent method  to update the optimal solution, whereas each agent's  gradient is computed based on  the expectation of  its local  objective  function over its private belief.
			
			\item  Finally, we rigorously prove that  all agents' belief converge almost surely  to a common belief  that is consistent with the   true parameter,	and that  the decision variable of every agent converges to the optimal result under this common true belief. Besides, we also give the convergence rate analysis of belief.
		\end{enumerate}

		\section{Algorithm and Assumptions}
		
		In this section, we propose a distributed fractional Bayesian learning method to solve the problem (\ref{problem}) with some basic assumptions.
		
		\subsection{Algorithm Design}
		To solve the problem (\ref{problem}),  we  need to update the belief of the unknown parameter set $\Theta$, and  get the adaptive decision based on the current belief. 
		At each step $t$, every agent $i\in \mathcal{N}$ maintains its private belief $q_i^{(t)}$ and  local decision $x_i^{(t)}$.
		Firstly,  each agent updates its belief by Bayesian fractional posterior \eqref{b_up} based on its current observation,   exchanges information with its  neighbors $\mathcal{N}_i=\{j|(i,j)\in\mathcal{E} \}$ over the distributed network $\mathcal{G}$ and performs a non-Bayesian consensus  using log-beliefs  \eqref{q_up} to renew  the  belief $q_i^{(t+1)}$.
		Secondly, we  obtain an adaptive decision based on the updated belief.
		Each agent $i\in \mathcal{N}$ calculates a local function $\sum_{\theta \in \Theta} J_i\left(x_i^{(t)}, \theta\right) q_i^{(t+1)}(\theta)$ by  averaging its private cost function $J_i (x , \theta  )$ across its    belief  $q_i^{(t+1)}$,  and then performs a  gradient descent method based on this local function  and  shares the intermediate result with its neighbors. This formate a communication after computation form, which is quiet common in all-reduce distributed algorithm\cite{bluefog}.  After  receiving  its neighbors' temporary decision information over the static connected network,  agent $i\in \mathcal{N}$ renews the decision $x_i^{(t+1)}$ by a distributed linear consensus protocol.
		Finally, we feed the results of the current iteration into the unknown system to obtain the corresponding output data with noise and proceed to the next loop. The pseudo-code for the algorithm is outlined in Algorithm \ref{alg:CDSA}.
		
		\begin{algorithm}
			\caption{Distributed Fractional Bayesian Learning in Optimization}
			\label{alg:CDSA}
			\begin{algorithmic}
				\STATE{\textbf{Initialization:} For each $i\in\mathcal{N}$: $(x_i^{(0)},y_i^{(0)})$ ; stepsize sequence $\{\alpha^{(t)}\geq0\}_{t\geq0}$; weigh matrix $W=[w_{ij}]_{N\times N}$; prior distribution $q_i^{(0)}=\frac{1}{M}\boldsymbol{1}_M$}
				\STATE{\textbf{Belief update:} for each agent $i\in\mathcal{N}$, and $m=1,\cdots,M$}
				\STATE{\quad Update local fractional Bayesian posterior belief}
				\begin{equation}
					b_i^{(t)}(\theta_m)=\frac{f_i(y_i^{(t)}|x_i^{(t)},\theta_m)^{\alpha(t)}q_i^{(t)}(\theta_m)}{\sum_{\theta\in\Theta}f_i(y_i^{(t)}|x_i^{(t)},\theta)^{\alpha(t)}q_i^{(t)}(\theta)},\label{b_up}
				\end{equation}
				\STATE{\quad Receive information $b_j^{(t)}(\theta_m)$ from $j\in\mathcal{N}_i$ and perform a fractional Bayesian rule to update the private belief}
				\begin{align}		
					q_i^{(t+1)}(\theta_m)=\frac{\exp(\sum_{j\in \mathcal{N}_i}w_{ij}log(b_j^{(t)}(\theta_m)))}{\sum_{\theta\in\Theta}\exp(\sum_{j\in \mathcal{N}_i}w_{ij}log(b_j^{(t)}(\theta)))} \label{q_up}
				\end{align}
				\STATE{\textbf{Decision update:} Given the current private belief $q_i^{(t+1)}$,  each agent $i\in\mathcal{N}$ evaluates its local expected cost by}
				\begin{align}
					\tilde{J}_i(x_i^{(t)},\boldsymbol{\theta})=\sum_{\theta\in\Theta}J_i(x_i^{(t)},\theta)q_i^{(t+1)}(\theta).\label{be_av}
				\end{align}
				\STATE{\qquad Then, perform a local gradient descent and share the current local state with neighboring nodes, namely}
			\begin{align}
				x_i^{(t+1)}= \sum_{j\in \mathcal{N}_i}w_{ij}\left[x_j^{(t)}-\alpha^{(t)}\frac{\partial}{\partial x}	\tilde{J}_j(x_j^{(t)},\boldsymbol{\theta})\right]	\label{decision_update}
			\end{align}
				\STATE{\textbf{Obtain the new data:}  Every agent $i\in\mathcal{N}$   gets   new data based on the renewed  decision under true parameter $\theta_*$.}
				\begin{align}
					y_i^{(t+1)}=J_i(x_i^{(t+1)},\theta_*)+\epsilon_i.\label{y_data}
				\end{align}
			\end{algorithmic}
		\end{algorithm}
		
		\textbf{Remark 1. } Compared to the standard Bayesian   posterior  in multi-agent Bayesian learning  \cite{wu2020multi}, we use Bayesian fractional posterior distribution in (\ref{b_up}). It has been demonstrated to be valuable in Bayesian inference because of its flexibility in incorporating historical information. This method modifies the likelihood of historical data using a fractional power $\alpha^{(t)}$\cite{han2023study}. The parameter $\alpha$ controls the relative weight of loss-to-data to loss-to-prior. If $0<\alpha< 1$,  the loss-to-prior is given more prominence than newly generated data in the Bayesian update; $\alpha=1$ is the standard Bayesian; $\alpha>1$ that means we pay more attention to data,  and in the extreme case  with large $\alpha$, the Bayesian estimator degenerates into maximum likelihood estimator as in frequentist inference \cite{bissiri2016general}. It has been shown in \cite{grunwald2012safe} that for small $\alpha,$  fractional Bayesian inference outperform standard Bayesian  for the underlying unknown distribution in several settings.
		
		
		\textbf{Remark 2.}   Different from the standard linear consensus in distributed scenarios \cite{nedic2016tutorial},
		we  adopt  (\ref{q_up})  that implements distributed consensus averaging on a reweighting of the log-beliefs.
		It is worth noting that  the standard linear consensus protocol
		simplified into a vector form $\boldsymbol{x}(t+1)=\boldsymbol{W}\boldsymbol{x}(t)$ \cite{fastdis_ave} has a convergence rate of $\mathcal{O}(\rho_w^t)$, where $\rho_w$ is the spectral radius of $W-\frac{\boldsymbol{1}\boldsymbol{1}^T}{N}$. Log-belief consensus $\log \boldsymbol{x}(t+1)=\boldsymbol{W}\log \boldsymbol{x}(t)$ can be recast  as  $ \boldsymbol{y}(t+1)=W\boldsymbol{y}(t)$ with $\boldsymbol{y}(t)\triangleq\log\boldsymbol{x}(t)$, where $\boldsymbol{y}(t)$ converges at rate $\mathcal{O}(\rho_w^t)$, hence $\boldsymbol{x}(t)$ displays a exponential faster  rate than $\boldsymbol{y}(t)$. Thus, the utilized  method (\ref{q_up})  is likely to bring    a  faster rate of consensus.
	
\textbf{Remark 3.}  This   work primarily  focuses on unknown parameter in a discrete set $\Theta=\{\theta_1,\theta_2,...,\theta_M\}$, while it might  be potentially  extended into continuous parameter case.
As for continuous bounded set $\Theta$, the general update of fractional Bayesian posterior belief  in (\ref{b_up}) should be $g^{(t+1)}(\theta)=\frac{f(y^{(t)}|x^{(t)},\theta)^{\alpha(t)}g^{(t)}(\theta)}{\int_{\theta\in\Theta}f(y^{(t)}|x^{(t)},\theta)^{\alpha(t)}g^{(t)}(\theta)}$ for all $ \theta\in\Theta$, where $g^{(t)}(\theta)$ is the probability density function of $\theta$ on the set $\Theta$ at time $t$. Since computing the full posterior belief in each step, which involves continuous integration in the denominator, can be computationally intensive.  It is possible to use the Maximum A Posteriori (MAP) estimator $g_M^{(t)}=\arg\max_{\theta\in\Theta} g^{(t)}(\theta)= \arg \max_{\theta\in\Theta}  g^{(1)}(\theta) \prod_{j=1}^{k-1} f(y^{(t)}|x^{(t)},\theta)^{\alpha(t)}$ as a shift. See \cite[Section 6]{wu2020multi} for more details. 
		
		\subsection{Assumptions}
		To prove  the convergence of sequences $\{x_i^{(t)}\}_{t\geq 0}$ and $\{q_i^{(t)}\}_{t\geq 0}$ generated by Algorithm \ref{alg:CDSA} for all agents $i\in \mathcal{N}$, we give some assumptions as follows.
		\newtheorem{assum}{\textbf{Assumption}}
		\begin{assum}[Bounded Belief]\label{assum_boundbelief}
			Every realized cost has bounded information content, i.e.,  there exists a positive constant $B$ such that
			\begin{equation}
				\max_{i}\max_{\theta^{'},\theta^{''}\in\Theta}\max_{x}\sup_{y_i}\left|\log{\frac{f_i(y_i|x,\theta^{'})}{f_i(y_i|x,\theta^{''})}}\right|< B
			\end{equation}
			In addition, for each $i\in\mathcal{N}$, $f_i(y_i|x,\theta)$ is continuous in $x$ for all $\theta\in\Theta$.
		\end{assum}

	  Bounded private beliefs suggest that an agent $i\in\mathcal{N}$ can only reveal a limited amount of information about the unknown parameter. Conversely, the unbounded belief   $\sup_{y_i}\left|\log{\frac{f_i(y_i|x,\theta^{'})}{f_i(y_i|x,\theta^{''})}}\right|=\infty$ corresponds to a situation where an agent may receive arbitrarily strong signals favoring the true parameter\cite{acemoglu2011bayesian}.
		In this case, the information of agent $i$ is enough for  revealing the true parameter, and hence it is unnecessary to use the observation of multiple agents. Therefore, Assumption \ref{assum_boundbelief} is imposed to preclude the degraded case and make the multi-agent setting meaningful. 
		
		
		\begin{assum}[Graph and Weighted Matrix]\label{assum_graph}
			The  graph $\mathcal{G}$ is static, undirected and connected. The weighted adjacency matrix W is nonnegative and doubly stochastic, i.e.,
			\begin{equation}
				W\boldsymbol{1}=\boldsymbol{1}, \boldsymbol{1}^TW=\boldsymbol{1}^T.
			\end{equation}
		\end{assum}
	
This assumption is crucial in the development of distributed algorithms, based on which every agent's information can be merged after multiple rounds of communication. Then  consensus will be obtained.  With  Assumption \ref{assum_graph}, we can get the following lemma  from \cite{fastdis_ave}.
		\newtheorem{lemma}{\textbf{Lemma}}
		\begin{lemma}\cite[Theorem 1]{fastdis_ave}\label{W_lim}
			Let Assumption \ref{assum_graph} hold. Then $$\lim_{t\rightarrow\infty}W^t=\frac{\boldsymbol{1}\boldsymbol{1}^T}{N}$$ holds with exponential rate  $\mathcal{O}(\rho_w^t)$, where $ \rho_w \in [0,1)$ is the  the spectral radius of  $W-\frac{\boldsymbol{1}\boldsymbol{1}^T}{N}$.
		\end{lemma}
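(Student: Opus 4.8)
The plan is to exploit the symmetry of $W$ guaranteed by Assumption~\ref{assum_graph} together with a spectral argument, thereby reducing the claim to a norm bound on powers of the ``error matrix'' $W-\frac{\boldsymbol{1}\boldsymbol{1}^T}{N}$. First I would note that an undirected graph forces $W=W^T$, so $W$ is real symmetric and admits an orthonormal eigenbasis with real eigenvalues $1=\lambda_1\ge\lambda_2\ge\cdots\ge\lambda_N\ge-1$, while double stochasticity makes $\tfrac{1}{\sqrt N}\boldsymbol{1}$ the unit eigenvector for $\lambda_1=1$. Connectedness (together with primitivity of $W$, which holds once each agent retains a strictly positive self-weight and which excludes the bipartite case $\lambda_N=-1$) then yields, via the Perron--Frobenius theorem, that $1$ is a simple eigenvalue and the only eigenvalue of modulus one, whence $\max_{2\le i\le N}|\lambda_i|<1$.

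Next I would connect this quantity to $\rho_w$. Let $P:=\frac{\boldsymbol{1}\boldsymbol{1}^T}{N}$ denote the orthogonal projector onto $\mathrm{span}(\boldsymbol{1})$. The matrix $W-P$ acts as $W$ on the invariant subspace $\boldsymbol{1}^{\perp}$ and as $0$ on $\mathrm{span}(\boldsymbol{1})$, so its eigenvalues are $0,\lambda_2,\dots,\lambda_N$; therefore $\rho_w=\rho(W-P)=\max_{2\le i\le N}|\lambda_i|\in[0,1)$, matching the statement.

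The crux is the identity $W^t-P=(W-P)^t$ for every $t\ge1$, which I would prove by induction using $P^2=P$ and $WP=PW=P$ (the latter two being exactly the restatements $W\boldsymbol{1}=\boldsymbol{1}$ and $\boldsymbol{1}^TW=\boldsymbol{1}^T$ of Assumption~\ref{assum_graph}): assuming $W^t-P=(W-P)^t$, one gets
\begin{align*}
(W-P)^{t+1}&=(W^t-P)(W-P)=W^{t+1}-W^tP-PW+P^2\\
&=W^{t+1}-P-P+P=W^{t+1}-P .
\end{align*}
Since $W-P$ is symmetric, its spectral norm equals its spectral radius, so $\bigl\|W^t-\tfrac{\boldsymbol{1}\boldsymbol{1}^T}{N}\bigr\|_2=\|(W-P)^t\|_2=\rho_w^{\,t}\to0$, which delivers both the limit and the exponential rate $\mathcal{O}(\rho_w^t)$.

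The one place that needs care, as opposed to the routine symmetric linear algebra above, is the strict bound $\rho_w<1$: simplicity of the Perron eigenvalue $1$ is immediate from connectedness, but discarding $\lambda_N=-1$ requires aperiodicity/primitivity of $W$, which under Assumption~\ref{assum_graph} follows from strictly positive diagonal weights and can otherwise be inherited directly from \cite[Theorem~1]{fastdis_ave}.
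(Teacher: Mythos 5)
The paper gives no proof of this lemma at all: it is imported verbatim as a citation of \cite[Theorem 1]{fastdis_ave}, so there is nothing in the text to compare your argument against line by line. Your proof is the standard spectral one and is essentially correct: the identity $W^t-P=(W-P)^t$ via $WP=PW=P^2=P$ is exactly right, the eigenvalue bookkeeping $\rho(W-P)=\max_{2\le i\le N}|\lambda_i|$ is right, and for symmetric $W$ the equality of spectral norm and spectral radius gives the clean rate $\|W^t-P\|_2=\rho_w^t$. Two caveats, both of which you already half-acknowledge, are worth making explicit. First, Assumption~\ref{assum_graph} literally gives only a symmetric \emph{support} ($w_{ij}>0$ iff $j\in\mathcal{N}_i$), not $W=W^T$; without symmetry the identity $W^t-P=(W-P)^t$ still holds, but the exact norm equality degrades to $\|(W-P)^t\|_2^{1/t}\to\rho_w$ by Gelfand's formula, which still yields the claimed $\mathcal{O}(\rho_w^t)$ rate. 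Second, and more substantively, you are right that $\rho_w<1$ does \emph{not} follow from Assumption~\ref{assum_graph} as stated: the $2\times 2$ permutation matrix on a connected two-node graph is nonnegative, doubly stochastic, has symmetric support, and satisfies $\rho_w=1$ with $W^t$ oscillating. The cited theorem of \cite{fastdis_ave} in fact takes $\rho_w<1$ as a hypothesis rather than a conclusion, so the lemma as written in the paper quietly strengthens the citation; your observation that positive self-weights (as with the Metropolis--Hastings weights used in the experiments) restore primitivity and hence $\rho_w<1$ is the correct way to close that gap. Your argument, with that proviso stated as an added hypothesis, is a complete and self-contained proof of a result the paper only asserts.
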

		\begin{assum}[Stepsize Policy]\label{assum_step}
			The stepsize sequence $\{\alpha^{(t)}\}_{t\geq0}$  with $0<\alpha^{(t)}<1$ satisfies
 \[\sum_{t=0}^{\infty}\alpha^{(t)}=\infty {\rm~ and ~}\sum_{t=0}^{\infty}(\alpha^{(t)})^2\leq \infty.\]
		\end{assum}
		
		This assumption indicates that $\lim_{t \rightarrow \infty}\alpha^{(t)}=0$.
		
		 In the following, we impose some assumptions regarding  the strong  convexity  and Lipschitz smooth on the cost functions. 
			
			\begin{assum}[Function Properties]\label{assum_func}
				For every $i\in\mathcal{N}$, $J_i(x,\theta)$  is strongly convex and Lipschitz smooth in $x$ with constant $\mu$ and $L$ for any fixed $\theta  \in  \Theta$, i.e., for any $x,x'\in \mathbb{R}$, we have
				\begin{align}
					\left(\nabla_x J_i\left(x^{\prime}, \theta\right)-\nabla_x J_i(x, \theta)\right)^T\left(x^{\prime}-x\right) & \geq \mu\left\|x^{\prime}-x\right\|^2, \notag \\
					\left\|\nabla_x J_i\left(x^{\prime}, \theta\right)-\nabla_x J_i(x, \theta)\right\| & \leq L\left\|x^{\prime}-x\right\|.\notag
				\end{align}
			\end{assum}
			
%
			
			Finally, we impose the following  condition  on  the likelihood function $f_i(y_i|x,\theta)$ (viz. Probability Density Function),  which   can  guarantee the uniqueness of  true parameter $\theta_*$.
			\begin{assum}[Uniqueness of  true parameter $\theta_*$ ]\label{assum_optimal}
				For every $\theta\neq \theta_*$, there exists at least one agent $i\in \mathcal{N}$ with the KL divergence $D_{KL}\left(f_i(y_i|x,\theta_*)||f_i(y_i|x,\theta)\right)>0$ for all $x\in \mathbb{R}$. Here,  the  KL divergence between the distribution of observed $y$ with decision $x$ under parameter $\theta_*$ and $\theta\in\Theta$ is given by
				\begin{align}
					D_{KL}\!\left(f(y|x,\theta_*)||f(y|x,\theta)\right)\!=\!\int_{y}\!f(y|x,\theta_*)log\!\left(\!\frac{f(y|x,\theta_*)}{f(y|x,\theta)}\!\right)\!dy.\notag
				\end{align}
			\end{assum}
			
			\section{Convergence Analysis}
			In this section, we give the convergence analysis of  Algorithm \ref{alg:CDSA}.  We not only  show the convergence of the belief $q_i^{(t)}(\cdot)$ about  the unknown parameters, but also  present the convergence analysis of the decision variable $x_i^{(t)}$. The overall proof process is illustrated in figure \ref{fig_proof} for ease of reading. 
			\begin{figure*}[h]
			\centerline{\includegraphics[width=1.7\columnwidth]{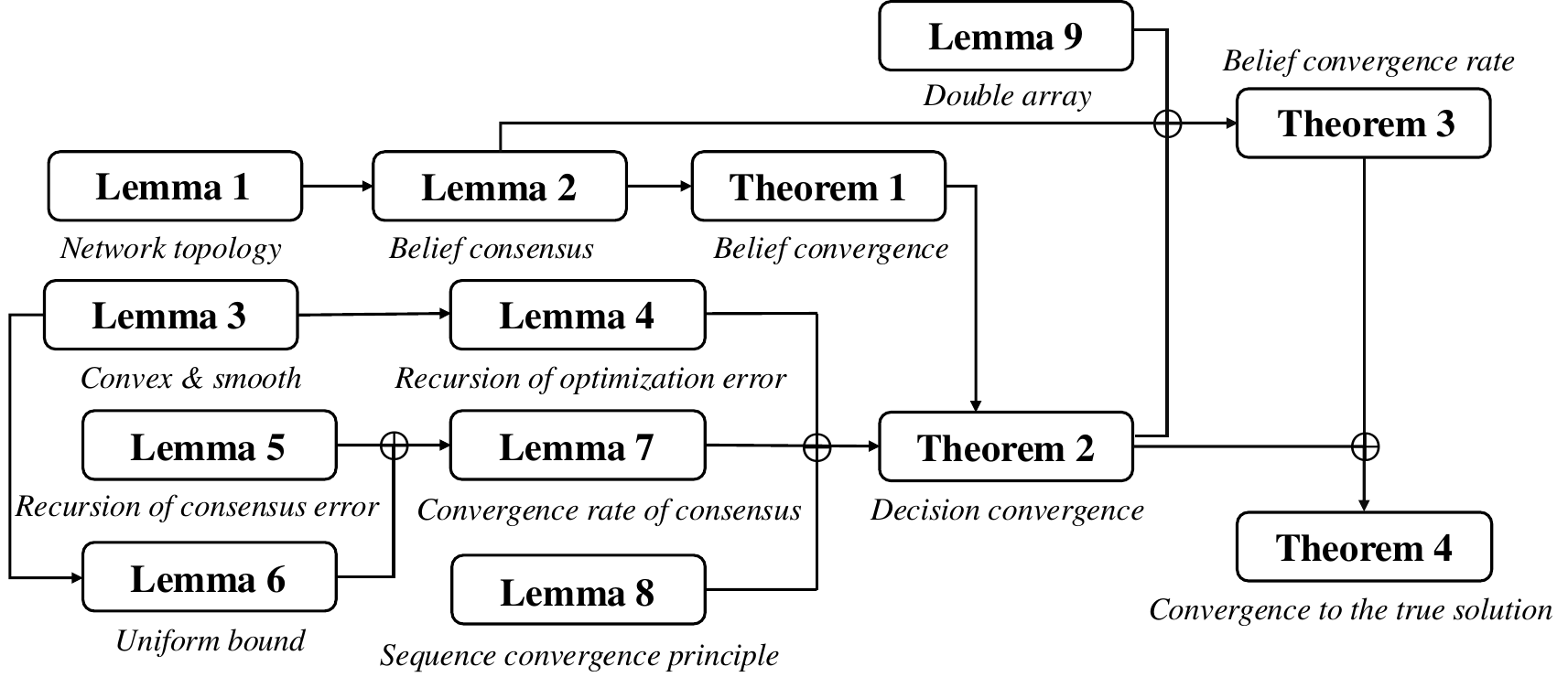}}
			\caption{The overall proof process of this paper, including belief convergence analysis and decision convergence analysis.}
			\label{fig_proof}
		\end{figure*}
			\subsection{Belief Convergence}\label{sec_beliefcon}
			In this subsection, we demonstrate that all agents' beliefs of $\Theta$ converge to a shared belief and present its formula. Though the proof  is motivated by  \cite{huang2023distributed}, observations
are different in  optimization versus  game settings.
So, we include  it  here for completeness.

			
			\begin{lemma}\label{lemma_logcon}
				Let Assumptions \ref{assum_boundbelief}, \ref{assum_graph} and \ref{assum_step} hold. Then   the  agents' log-belief ratios will finally reach consensus, i.e.
				\begin{align}
					\left|\log \tfrac{q_i^{(t+1)}\left(\theta_m\right)}{q_i^{(t+1)}\left(\theta_*\right)}-\tfrac{1}{N} \sum_{i=1}^N \log \tfrac{q_i^{(t+1)}\left(\theta_m\right)}{q_i^{(t+1)}\left(\theta_*\right)}\right|\rightarrow0, \forall \theta_m \in\Theta. \label{con_log}
				\end{align}
			Furthermore, for all $\theta\in\Theta$,  the sequence $\frac{1}{N}\sum_{i=1}^{N}\tfrac{q_i^{(t)}\left(\theta_m\right)}{q_i^{(t)}\left(\theta_*\right)}$  converges  almost surely to some non-negative random variable $\nu_m$.
			\end{lemma}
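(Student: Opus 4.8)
The plan is to analyze the recursion for the log-belief ratios and separate it into a ``consensus'' part and a ``martingale-driven'' part. First I would introduce the shorthand $\lambda_i^{(t)}(\theta_m) \triangleq \log \tfrac{q_i^{(t)}(\theta_m)}{q_i^{(t)}(\theta_*)}$ and, using the update rules \eqref{b_up} and \eqref{q_up}, derive the one-step recursion
\begin{align}
\lambda_i^{(t+1)}(\theta_m) = \sum_{j\in\mathcal{N}_i} w_{ij}\,\lambda_j^{(t)}(\theta_m) + \alpha^{(t)}\sum_{j\in\mathcal{N}_i} w_{ij}\, L_j^{(t)}(\theta_m),\notag
\end{align}
where $L_j^{(t)}(\theta_m) \triangleq \log \tfrac{f_j(y_j^{(t)}\mid x_j^{(t)},\theta_m)}{f_j(y_j^{(t)}\mid x_j^{(t)},\theta_*)}$. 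Stacking over $i$, this is $\boldsymbol{\lambda}^{(t+1)}(\theta_m) = W\boldsymbol{\lambda}^{(t)}(\theta_m) + \alpha^{(t)} W \boldsymbol{L}^{(t)}(\theta_m)$, and by Assumption~\ref{assum_boundbelief} each $|L_j^{(t)}(\theta_m)| < B$, so the driving term is uniformly bounded.

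For the consensus claim \eqref{con_log}, I would look at the disagreement vector $\boldsymbol{\lambda}^{(t)}(\theta_m) - \tfrac{1}{N}\boldsymbol{1}\boldsymbol{1}^T\boldsymbol{\lambda}^{(t)}(\theta_m)$. Multiplying the recursion by $(I - \tfrac{1}{N}\boldsymbol{1}\boldsymbol{1}^T)$ and using double stochasticity of $W$ (Assumption~\ref{assum_graph}), the disagreement vector evolves under $W - \tfrac{1}{N}\boldsymbol{1}\boldsymbol{1}^T$, whose spectral radius $\rho_w < 1$ by Lemma~\ref{W_lim}. Unrolling gives a bound of the form $\|\text{disagreement}^{(t)}\| \le \rho_w^t \|\text{disagreement}^{(0)}\| + \sum_{s=0}^{t-1} \rho_w^{t-1-s}\alpha^{(s)}\cdot C$ for a constant $C$ depending on $B$ and $N$. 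Since $\alpha^{(s)}\to 0$ (consequence of Assumption~\ref{assum_step}) and $\sum_s \rho_w^{t-1-s}\alpha^{(s)}$ is a convolution of a summable geometric sequence with a null sequence, this tail vanishes; hence the disagreement $\to 0$, which is exactly \eqref{con_log}.

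For the second claim, I would track the network average $\bar\lambda^{(t)}(\theta_m) \triangleq \tfrac1N \sum_i \lambda_i^{(t)}(\theta_m)$. Left-multiplying the recursion by $\tfrac1N\boldsymbol{1}^T$ and using $\boldsymbol{1}^T W = \boldsymbol{1}^T$ yields $\bar\lambda^{(t+1)}(\theta_m) = \bar\lambda^{(t)}(\theta_m) + \tfrac{\alpha^{(t)}}{N}\sum_j L_j^{(t)}(\theta_m)$. Now I would decompose $L_j^{(t)}(\theta_m) = -\,\mathbb{E}[L_j^{(t)}(\theta_m)\mid\mathcal{F}_t] + (\text{martingale difference})$ conditional on the natural filtration, where the conditional mean is $-D_{KL}(f_j(\cdot\mid x_j^{(t)},\theta_*)\,\|\,f_j(\cdot\mid x_j^{(t)},\theta_m)) \le 0$. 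Thus $\bar\lambda^{(t)}(\theta_m)$ is the sum of a nonincreasing-in-expectation drift term and a martingale with bounded increments scaled by $\alpha^{(t)}$; since $\sum (\alpha^{(t)})^2 < \infty$ the martingale part converges a.s. by the martingale convergence theorem (or Robbins--Siegmund). Combined with the drift being $\le 0$, one gets that $\bar\lambda^{(t)}(\theta_m)$ converges a.s. to a (possibly $-\infty$) limit; exponentiating, $\tfrac1N\sum_i \exp(\lambda_i^{(t)}(\theta_m))$ — and via the consensus result each individual $\exp(\lambda_i^{(t)}(\theta_m))$ shares the same limiting behavior — converges a.s.\ to a nonnegative random variable $\nu_m$. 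I would need a small additional argument (Jensen or the already-established consensus plus uniform boundedness of the per-step log-increments) to pass from convergence of $\bar\lambda^{(t)}$ to convergence of $\tfrac1N\sum_i q_i^{(t)}(\theta_m)/q_i^{(t)}(\theta_*)$ as stated.

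The main obstacle I anticipate is the last passage: the lemma asserts convergence of the \emph{arithmetic mean of the ratios} $\tfrac1N\sum_i e^{\lambda_i^{(t)}}$, not of the mean of the log-ratios $\bar\lambda^{(t)}$, and these are not interchangeable in general. The fix relies essentially on the consensus result \eqref{con_log}: once all $\lambda_i^{(t)}(\theta_m)$ are within a vanishing distance of their common average, $\tfrac1N\sum_i e^{\lambda_i^{(t)}} = e^{\bar\lambda^{(t)}}\cdot\tfrac1N\sum_i e^{\lambda_i^{(t)} - \bar\lambda^{(t)}}$ with the correction factor $\to 1$, so convergence of $e^{\bar\lambda^{(t)}}$ (including to $0$) transfers. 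Care is needed when the limit of $\bar\lambda^{(t)}$ is $-\infty$, where one must argue the product still tends to $0$; the uniform bound $B$ on one-step increments keeps the correction factor bounded, which suffices.
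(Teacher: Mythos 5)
Your proposal is correct, and the two halves compare differently with the paper. For the consensus claim \eqref{con_log} you do essentially what the paper does: the paper unrolls the same recursion $\log\tfrac{q_i^{(t+1)}(\theta_m)}{q_i^{(t+1)}(\theta_*)}=\sum_j W^{\tau}(i,j)$-weighted sums of the bounded log-likelihood ratios and bounds the deviation by $NB\sum_{\tau}\alpha^{(t-\tau+1)}\bigl|W^{\tau}(i,j)-\tfrac1N\bigr|$, invoking Lemma \ref{W_lim} and the convolution-of-a-null-sequence-with-a-geometric-sequence fact; your disagreement-vector formulation with $W-\tfrac1N\boldsymbol{1}\boldsymbol{1}^T$ is the same argument in matrix form. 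For the second claim your route is genuinely different: the paper never touches the average log-ratio here; it shows directly that $\tfrac1N\sum_i q_i^{(t)}(\theta_m)/q_i^{(t)}(\theta_*)$ is a nonnegative supermartingale, using convexity of $e^x$ on the update \eqref{q_up}, column stochasticity of $W$, concavity of $x^{\alpha}$ for $0<\alpha^{(t)}<1$, and the identity $\int f_i(y|x,\theta_*)\tfrac{f_i(y|x,\theta_m)}{f_i(y|x,\theta_*)}dy=1$, then applies the supermartingale convergence theorem — a short, self-contained argument that does not need \eqref{con_log} at all. Your version instead decomposes the increment of $\bar\lambda^{(t)}$ into a nonpositive KL drift plus a martingale with increments $O(\alpha^{(t)})$, uses $\sum_t(\alpha^{(t)})^2<\infty$ for a.s. convergence of the martingale part, and then transfers from $e^{\bar\lambda^{(t)}}$ to the arithmetic mean of the ratios via the already-proved consensus, handling the $\bar\lambda^{(t)}\to-\infty$ case separately; this works because the consensus bound is deterministic (Assumption \ref{assum_boundbelief} gives a $\sup_{y_i}$ bound), so the correction factor tends to $1$ pathwise. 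What your route buys is extra structure: it identifies $\nu_m=\lim_t e^{\bar\lambda^{(t)}}$, yields the convergence of $\tfrac1N\sum_i\log\tfrac{q_i^{(t)}(\theta_m)}{q_i^{(t)}(\theta_*)}$ that the paper only recovers later inside Theorem \ref{lemma_be_con}, and makes visible that the limit is governed by accumulated KL divergence, anticipating Theorem \ref{thm3}; it also avoids the concavity step and hence the restriction $\alpha^{(t)}<1$, at the price of invoking the square-summability of the stepsizes and the consensus result, neither of which the paper's supermartingale argument needs. Two small blemishes: the displayed decomposition should read $L_j^{(t)}=\mathbb{E}\bigl[L_j^{(t)}\mid\mathcal{F}_t\bigr]+(\text{martingale difference})$ (the written minus sign is a typo, as your next sentence makes clear), and you should note explicitly that $x_j^{(t)}$, hence the conditional law of $y_j^{(t)}$, is $\mathcal{F}_t$-measurable so that the conditional mean is indeed $-D_{KL}\le 0$ — the same measurability fact the paper uses when taking conditional expectations.
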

			\begin{proof}
				According to the belief update rules (\ref{b_up}) and (\ref{q_up}), we have
				\begin{align}\label{log_qq}
					&\log \frac{q_i^{(t+1)}\left(\theta_m\right)}{q_i^{(t+1)}\left(\theta_*\right)} \overset{(\ref{q_up})}{=}\log   \frac{\exp( \sum_{j=1}^N w_{i j}  \log b_j^{(t)} (\theta_m) )}{\exp ( \sum_{j=1}^N w_{i j}  \log b_j^{(t)} (\theta_* ))}   \notag
					\\&  =\sum_{j=1}^N w_{i j}  \log b_j^{(t)} (\theta_m) -  \sum_{j=1}^N w_{i j} \log  b_j^{(t)} (\theta_* ) \notag\\
					&=
					\sum_{j=1}^N w_{i j} \log \frac{b_j^{(t)}\left(\theta_m\right)}{b_j^{(t)}\left(\theta_*\right)} \notag\\
					& \overset{(\ref{b_up})}{=}\sum_{j=1}^N w_{i j} \log \frac{q_j^{(t)}\left(\theta_m\right)}{q_j^{(t)}\left(\theta_*\right)}+\alpha^{(t)} \sum_{j=1}^N w_{i j} \log \frac{f_j\left(y_j^{(t)} | x_j^{(t)}, \theta_m\right)}{f_j\left(y_j^{(t)} | x_j^{(t)}, \theta_*\right)} \notag\\
					&\xlongequal{\text{recursion}}\sum_{j=1}^N W^{t+1}(i, j) \log \frac{q_j^{(0)}\left(\theta_m\right)}{q_j^{(0)}\left(\theta_*\right)}\notag\\
					&~+\sum_{j=1}^N \sum_{\tau=1}^t W^\tau(i, j) \alpha^{(t-\tau+1)} \log \frac{f_j\left(y_j^{(t-\tau+1)} | x_j^{(t-\tau+1)}, \theta_m\right)}{f_j\left(y_j^{(t-\tau+1)} | x_j^{(t-\tau+1)}, \theta_*\right)}\notag\\
					& =\sum_{j=1}^N \sum_{\tau=1}^t W^\tau(i, j) \alpha^{(t-\tau+1)} \log \frac{f_j\left(y_j^{(t-\tau+1)} | x_j^{(t-\tau+1)}, \theta_m\right)}{f_j\left(y_j^{(t-\tau+1)} | x_j^{(t-\tau+1)}, \theta_*\right)},
				\end{align}
				where $W^t(i,j)$ means the $(i,j)$-element of matrix $W^t$, and the last equality follows from $q_i^{(0)}=\frac{1}{M} \mathbf{1}_M$.
				
				With Assumption \ref{assum_graph},  we achieve the double stochasticity of  $W^t$. Then  based on (\ref{log_qq}),  we have
				\begin{align}
					&\frac{1}{N} \sum_{i=1}^N \log \frac{q_i^{(t+1)}\left(\theta_m\right)}{q_i^{(t+1)}\left(\theta_*\right)}=\notag \\
					&\frac{1}{N} \sum_{i=1}^N \sum_{\tau=1}^t \alpha^{(t-\tau+1)} \log \frac{f_i\left(y_i^{(t-\tau+1)} | x_i^{(t-\tau+1)}, \theta_m\right)}{f_i\left(y_i^{(t-\tau+1)} | x_j^{(t-\tau+1)}, \theta_*\right)}.\label{sum_log_qq}
				\end{align}
				
				Therefore, combining (\ref{log_qq}) and (\ref{sum_log_qq}) yields
				\begin{align}
					& \left|\log \tfrac{q_i^{(t+1)}\left(\theta_m\right)}{q_i^{(t+1)}\left(\theta_*\right)}-\tfrac{1}{N} \sum_{i=1}^N \log \tfrac{q_i^{(t+1)}\left(\theta_m\right)}{q_i^{(t+1)}\left(\theta_*\right)}\right| \notag\\
					\leq & \sum_{j=1}^N\!\sum_{\tau=1}^t \alpha^{(t-\tau+1)}\!\left|W^\tau(i, j)-\tfrac{1}{N}\right|\!\left|\log\!\tfrac{f_j(y_j^{(t-\tau+1)} | x_j^{(t-\tau+1)}, \theta_m)}{f_j(y_j^{(t-\tau+1)} | x_j^{(t-\tau+1)}, \theta_*)}\!\right| \notag\\
					\leq& N B \sum_{\tau=1}^t \alpha^{(t-\tau+1)}\left|W^\tau(i, j)-\tfrac{1}{N}\right|,
				\end{align}
				where the last inequality follows from Assumption \ref{assum_boundbelief}. Denote sequence $\gamma_\tau=|W^\tau(i, j)-\tfrac{1}{N}|$.
				In light of Lemma \ref{W_lim}, we can obtain $\lim_{\tau\rightarrow\infty}\gamma_{\tau}=0$ with exponential rate. This together with Assumption  \ref{assum_step}  brings the asymptotic convergence of $ \left|\log \tfrac{q_i^{(t+1)}\left(\theta_m\right)}{q_i^{(t+1)}\left(\theta_*\right)}-\tfrac{1}{N} \sum_{i=1}^N \log \tfrac{q_i^{(t+1)}\left(\theta_m\right)}{q_i^{(t+1)}\left(\theta_*\right)}\right|$.
				\footnote{\cite[Lemma 7]{nedic2010constrained}  Stepsize sequence $0<\alpha^{(t)}<1$ satisfies $\lim_{t \rightarrow \infty}\alpha^{(t)}=0$ under Assumption \ref{assum_step}.  Besides, $0<\gamma_t<1$ is a scalar sequence satisfies $\lim_{t\rightarrow\infty}\gamma_t=0$ with exponential rate, then $\lim_{t\rightarrow\infty}\sum_{\tau=0}^{t}\alpha^{(t-\tau)}\gamma_\tau$=0.}
				
				As for the convergence of sequence $\frac{1}{N}\sum_{i=1}^{N}\tfrac{q_i^{(t)}\left(\theta_m\right)}{q_i^{(t)}\left(\theta_*\right)}$, recalling the third equality of \eqref{log_qq},  we have
%
				\begin{align}
					\frac{q_i^{(t+1)}\left(\theta_m\right)}{q_i^{(t+1)}\left(\theta_*\right)} & =\exp \left(\sum_{j=1}^N w_{i j} \log \frac{b_j^{(t)}\left(\theta_m\right)}{b_j^{(t)}\left(\theta_*\right)}\right) \notag\\
					& \leq \sum_{j=1}^N w_{i j} \frac{b_j^{(t)}\left(\theta_m\right)}{b_j^{(t)}\left(\theta_*\right)}\notag
					\\& \overset{ (\ref{b_up})}{=}\sum_{j=1}^N w_{i j} \frac{f_j\left(y_j^{(t)} | x_j^{(t)}, \theta_m\right)^{\alpha^{(t)}} q_j^{(t)}\left(\theta_m\right)}{f_j\left(y_j^{(t)} | x_j^{(t)}, \theta_*\right)^{\alpha^{(t)}} q_j^{(t)}\left(\theta_*\right)},\label{q_plus_q}
				\end{align}
				where  the first inequality is followed by $e^{\lambda a+(1-\lambda) b}\leq \lambda e^a+(1-\lambda)e^b$, since $e^x$ is a convex function and $\sum_{j=1}^{N}w_{ij}=1$.
				Furthermore, based on $\sum_{i=1}^{N}w_{ij}=1$, we derive
				\begin{align}
					\tfrac{1}{N}\sum_{i=1}^{N}\frac{q_i^{(t+1)}\left(\theta_m\right)}{q_i^{(t+1)}\left(\theta_*\right)} \leq\tfrac{1}{N}\sum_{i=1}^{N} \tfrac{f_i\left(y_i^{(t)} | x_i^{(t)}, \theta_m\right)^{\alpha^{(t)}} q_i^{(t)}\left(\theta_m\right)}{f_i\left(y_i^{(t)} | x_i^{(t)}, \theta_*\right)^{\alpha^{(t)}} q_i^{(t)}\left(\theta_*\right)}.
				\end{align}
				
				By taking conditional expectation on both sides of the above equation and noting that $q_i^{(t)}$ is $\mathcal{F}_t$-measurable,  where  $\mathcal{F}_t$ denote the $\sigma$-algebra generated by $\{ (x_i^{(0)},y_i^{(0)}), (x_i^{(1)},y_i^{(1)}), \cdots, (x_i^{(t-1)},y_i^{(t-1)})|i\in\mathcal{N} \}$. Then 
				\begin{align} &\mathbb{E}\Big[\tfrac{1}{N}\sum_{i=1}^{N}\frac{q_i^{(t+1)}\left(\theta_m\right)}{q_i^{(t+1)}\left(\theta_*\right)}|\mathcal{F}_t\Big]
\notag\\ 			&\leq\tfrac{1}{N}\sum_{i=1}^{N}\frac{q_i^{(t)}\left(\theta_m\right)}{q_i^{(t)}\left(\theta_*\right)}\mathbb{E}\Bigg[\left(\frac{f_i\left(y_i^{(t)} | x_i^{(t)}, \theta_m\right)} {f_i\left(y_i^{(t)} | x_i^{(t)}, \theta_*\right) }\right)^{\alpha^{(t)}}|\mathcal{F}_t\Bigg]\notag\\				&\leq\tfrac{1}{N}\sum_{i=1}^{N}\frac{q_i^{(t)}\left(\theta_m\right)}{q_i^{(t)}\left(\theta_*\right)}\mathbb{E}\Bigg[\frac{f_i\left(y_i^{(t)} | x_i^{(t)}, \theta_m\right)} {f_i\left(y_i^{(t)} | x_i^{(t)}, \theta_*\right) }|\mathcal{F}_t\Bigg]^{\alpha^{(t)}}\notag\\	
					&=\tfrac{1}{N}\sum_{i=1}^{N}\tfrac{q_i^{(t)}\left(\theta_m\right)}{q_i^{(t)}\left(\theta_*\right)}\Bigg[\int_{y_i^{(t)}} f_i(y_i^{(t)}|x_i^{(t)},\theta_*)\tfrac{f_i\left(y_i^{(t)} | x_i^{(t)}, \theta_m\right)} {f_i\left(y_i^{(t)} | x_i^{(t)}, \theta_*\right) }dy_i^{(t)}\Bigg]^{\alpha^{(t)}}\notag\\
					&=\tfrac{1}{N}\sum_{i=1}^{N}\frac{q_i^{(t)}\left(\theta_m\right)}{q_i^{(t)}\left(\theta_*\right)},\label{expect_q_q}
				\end{align}
				where the second inequality holds since $x^\alpha,~0<\alpha<1$ is a concave function. Therefore, $\tfrac{1}{N}\sum_{i=1}^{N}\frac{q_i^{(t)}\left(\theta_m\right)}{q_i^{(t)}\left(\theta_*\right)}$ is a non-nenagtive supermartingale.
				Hence by the supermartingale convergence theorem, we conclude  its almost sure convergence,    denoted as $\nu_m$.
				%
			\end{proof}

			In the following, we show that  every agent's   estimated belief of $M$ possible parameters converges to a common belief $\tilde{\boldsymbol{q}}\triangleq(\tilde{q}(\theta_1),\tilde{q}(\theta_2),\cdots,\tilde{q}(\theta_M))^T\in\mathbb{R}^M$.
			\newtheorem{theorem}{Theorem}
			\begin{theorem}\label{lemma_be_con}
				Let Assumptions  \ref{assum_boundbelief}, \ref{assum_graph}, and \ref{assum_step} hold. Then for every agent $i\in\mathcal{N}$, its belief sequence $\{q_i^{(t)}\}_{t\geq0}$ generated by Algorithm \ref{alg:CDSA} converges to a common belief with the form
				\begin{equation}
					\tilde{q}(\theta_m)=\frac{\nu_m}{\sum_{m=1}^{M}\nu_m}  ~\text{for~each~ }m=1,\cdots,M, \label{converq}
				\end{equation}
				where $\nu_m=\lim\limits_{t \rightarrow \infty}\frac{1}{N}\sum_{i=1}^{N}\tfrac{q_i^{(t)}\left(\theta_m\right)}{q_i^{(t)}\left(\theta_*\right)}$ is given in Lemma \ref{lemma_logcon}.
				\end{theorem}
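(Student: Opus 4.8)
The plan is to reduce everything to the behaviour of the belief ratios $r_i^{(t)}(\theta_m) := q_i^{(t)}(\theta_m)/q_i^{(t)}(\theta_*)$, whose agent-average is already controlled by Lemma \ref{lemma_logcon}. Since each $q_i^{(t)}$ is a probability vector on $\Theta$, we have the exact identity
\[
q_i^{(t)}(\theta_m)=\frac{r_i^{(t)}(\theta_m)}{\sum_{m'=1}^{M}r_i^{(t)}(\theta_{m'})},\qquad r_i^{(t)}(\theta_*)\equiv 1,
\]
so it suffices to prove that $r_i^{(t)}(\theta_m)\to\nu_m$ almost surely for every $i\in\mathcal{N}$ and every $m$. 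Once that is in hand, the denominator is automatically bounded below by $1$ (its $m'=*$ term), so passing to the limit gives $q_i^{(t)}(\theta_m)\to\nu_m/\sum_{m'}\nu_{m'}=\tilde q(\theta_m)$, independent of $i$, which is the asserted common limit.

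To establish $r_i^{(t)}(\theta_m)\to\nu_m$, I would write $\ell_i^{(t)}:=\log r_i^{(t)}(\theta_m)$ and $\bar\ell^{(t)}:=\tfrac1N\sum_{j=1}^N\ell_j^{(t)}$, and invoke the two conclusions of Lemma \ref{lemma_logcon}, which hold on a common almost-sure event: (i) the log-ratio consensus $\ell_i^{(t)}-\bar\ell^{(t)}\to 0$ for every $i$ (this bound is in fact pathwise, since it rests on Assumption \ref{assum_boundbelief}); and (ii) $\tfrac1N\sum_{j=1}^N r_j^{(t)}(\theta_m)=\tfrac1N\sum_{j=1}^N e^{\ell_j^{(t)}}\to\nu_m$. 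The key step is the factorization
\[
\frac1N\sum_{j=1}^N e^{\ell_j^{(t)}}=e^{\bar\ell^{(t)}}\cdot\frac1N\sum_{j=1}^N e^{\ell_j^{(t)}-\bar\ell^{(t)}}.
\]
By (i) and continuity of $\exp$, each summand $e^{\ell_j^{(t)}-\bar\ell^{(t)}}\to 1$, so the second factor tends to $1$; together with (ii) this forces $e^{\bar\ell^{(t)}}\to\nu_m$ (in particular $\nu_m<\infty$, which is also guaranteed because the non-negative supermartingale in Lemma \ref{lemma_logcon} has an a.s.\ finite limit). Then $r_i^{(t)}(\theta_m)=e^{\ell_i^{(t)}}=e^{\bar\ell^{(t)}}e^{\ell_i^{(t)}-\bar\ell^{(t)}}\to\nu_m\cdot 1=\nu_m$.

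The only spot that requires genuine care is precisely this passage between the ``log-domain'' consensus statement and the ``linear-domain'' convergence to $\nu_m$: a priori $e^{\bar\ell^{(t)}}$ could oscillate or diverge, and it is the factorization above, combined with the almost-sure limit of the agent-averaged ratios, that pins it down. Everything else is routine bookkeeping: summing the coordinatewise limits over $m=1,\dots,M$; using $\nu_*=1$ so that $\sum_{m'}\nu_{m'}\ge 1>0$, which makes the quotient well defined and the limit the same for all $i$; and observing that all the required convergences occur simultaneously on the almost-sure event provided by Lemma \ref{lemma_logcon}.
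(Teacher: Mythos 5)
Your proposal is correct and follows essentially the same route as the paper: both arguments combine the log-domain consensus and the almost-sure convergence of the agent-averaged ratio from Lemma \ref{lemma_logcon} to pin down each individual ratio $q_i^{(t)}(\theta_m)/q_i^{(t)}(\theta_*)\to\nu_m$, and then normalize. Your final step via the exact probability-vector identity $q_i^{(t)}(\theta_m)=r_i^{(t)}(\theta_m)/\sum_{m'}r_i^{(t)}(\theta_{m'})$ is a mild streamlining of the paper's equivalent computation, which re-derives this normalization from the update rule \eqref{q_up} and separately argues that the $\alpha^{(t)}$-weighted likelihood term vanishes.
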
	
			\begin{proof}
				Performing an exponential operation on both side of (\ref{con_log}), we have
				\begin{align*}
					&\lim_{t \rightarrow \infty}\tfrac{q_i^{(t+1)}\left(\theta_m\right)}{q_i^{(t+1)}\left(\theta_*\right)} \cdot \tfrac{1}{\exp \left(\tfrac{1}{N} \sum_{i=1}^N \log \tfrac{q_i^{(t+1)}\left(\theta_m\right)}{q_i^{(t+1)}\left(\theta_*\right)}\right)} = 1 \notag\\
					\Rightarrow&\lim_{t \rightarrow \infty}\tfrac{1}{N} \sum_{i=1}^N \tfrac{q_i^{(t+1)}\left(\theta_m\right)}{q_i^{(t+1)}\left(\theta_*\right)} \cdot \tfrac{1}{\exp \left(\tfrac{1}{N} \sum_{i=1}^N \log \tfrac{q_i^{(t+1)}\left(\theta_m\right)}{q_i^{(t+1)}\left(\theta_*\right)}\right)} =1 \notag\\
					\Rightarrow &\lim_{t \rightarrow \infty} \left[\tfrac{1}{N} \sum_{i=1}^N \tfrac{q_i^{(t+1)}\left(\theta_m\right)}{q_i^{(t+1)}\left(\theta_*\right)}-\exp \left(\tfrac{1}{N} \sum_{i=1}^N \log \tfrac{q_i^{(t+1)}\left(\theta_m\right)}{q_i^{(t+1)}\left(\theta_*\right)}\right) \right]=0\notag.
\end{align*}
This together with Lemma~ \ref{lemma_logcon} implies that
\begin{align}
				 &\lim_{t \rightarrow \infty}\tfrac{1}{N} \sum_{i=1}^N \log \tfrac{q_i^{(t+1)}\left(\theta_m\right)}{q_i^{(t+1)}\left(\theta_*\right)} =  \log \nu_m\notag.
				\end{align}
Then by using of Lemma ~\ref{lemma_logcon}, we derive
\begin{align}
				 & \lim_{t \rightarrow \infty}\log \tfrac{q_i^{(t+1)}\left(\theta_m\right)}{q_i^{(t+1)}\left(\theta_*\right)}\xlongequal{}\log\nu_m\label{lim_lognu}.
				\end{align}
				Therefore,  by using Assumption~\ref{assum_graph}, we obtain that
				\begin{align}\label{limit_nu}
					&\lim_{t \rightarrow \infty}\exp\left(\sum_{j=1}^{N}w_{ij} \log \tfrac{q_j^{(t+1)}\left(\theta_m\right)}{q_j^{(t+1)}\left(\theta_*\right)}\right)=\nu_m .
				\end{align}
				
				On the other hand, by the belief update rules in (\ref{q_up}),
				\begin{align}
					q_i^{(t+1)}(\theta_*)&=\tfrac{\exp(\sum_{j\in \mathcal{N}_i}w_{ij}log(b_j^{(t)}(\theta_*)))}{\sum_{\theta\in\Theta}\exp(\sum_{j\in \mathcal{N}_i}w_{ij}log(b_j^{(t)}(\theta)))}\notag
					\\& =\left(1+\sum_{\theta \neq \theta_*} \exp \left( \tfrac{ \sum_{j=1}^N w_{i j} \log b_j^{(t)}(\theta) }{\sum_{j=1}^N w_{i j} \log b_j^{(t)}\left(\theta_*\right)} \right)\right)^{-1} \notag
					\\& =\left(1+\sum_{\theta \neq \theta_*} \exp \left(\sum_{j=1}^N w_{i j} \log \tfrac{b_j^{(t)}(\theta)}{b_j^{(t)}\left(\theta_*\right)}\right)\right)^{-1} \notag\\
					& \overset{(\ref{b_up})}{=}\Bigg(1+\sum_{\theta \neq \theta_*} \exp \Big(\sum_{j=1}^N w_{i j} \alpha^{(t)} \log \tfrac{f_j\left(y_j^{(t)}|x_j^{(t)}, \theta\right)}{f_j\left(y_j^{(t)}|x_j^{(t)}, \theta_*\right)}\notag\\
					&\quad+\sum_{j=1}^N w_{i j} \log \tfrac{q_j^{(t)}(\theta)}{q_j^{(t)}\left(\theta_*\right)}\Big)\Bigg)^{-1}\label{gene_q},~ \forall i\in \mathcal{N},
				\end{align}
where the third equality in the above equation is achieved  similarly to the  third equality of \eqref{log_qq}.
				
				By recalling from  Assumptions \ref{assum_boundbelief} and   \ref{assum_step} that  $\log \tfrac{f_j\left(y_j^{(t)}|x_j^{(t)}, \theta\right)}{f_j\left(y_j^{(t)}|x_j^{(t)}, \theta_*\right)}$ is bounded and $\lim\limits_{t \rightarrow \infty}\alpha^{(t)}=0$. Thus,
				\begin{align}
					\lim_{t \rightarrow \infty}\sum_{j=1}^N w_{i j} \alpha^{(t)} \log \frac{f_j\left(y_j^{(t)}| x_j^{(t)}, \theta\right)}{f_j\left(y_j^{(t)}| x_j^{(t)}, \theta_*\right)}=0.\label{t0}
				\end{align}
				Take $\theta_*=\theta_1$ without loss of generality. Then by substituting (\ref{limit_nu}) and (\ref{t0}) into (\ref{gene_q}), we have
				\begin{align}
					\lim_{t \rightarrow \infty} q_i^{(t+1)}(\theta_*)=(1+\sum_{m=2}^{M}\nu_m)^{-1}, ~a.s.
				\end{align}
				Further, applying (\ref{lim_lognu}) into above relation yields
				\begin{equation}
					\lim_{t \rightarrow \infty} q_i^{(t+1)}(\theta_m)=\frac{\nu_m}{1+\sum_{m=2}^{M}\nu_m},~a.s.~\forall i\in \mathcal{N}
				\end{equation}
				Therefore,  Theorem \ref{lemma_be_con} can be proved by noting that $\nu_1=1$  with the notation $\theta_*=\theta_1$.
			\end{proof}
			
			Though the  above result shows  that every agent's   belief  converges to a common belief, which  does not mean that   the belief vector  is $1$ for the  element  with true parameter $\theta_*.$  Therefore,
			we need  to further prove  its  convergence to a true parameter, i.e. $\tilde{\boldsymbol{q}}\rightarrow \boldsymbol{q}^*$, where
			in vector $\boldsymbol{q}^*(\boldsymbol{\theta})$ only $q(\theta_*)=1$, while  other $q(\theta_m)|_{\theta_m\neq \theta_*}=0$.
This result along with its proof will be given in Theorem \ref{thm3}.

			\subsection{Decision Convergence}
			
			For each $i\in \mathcal{N},$ define
			\begin{align}
				\boldsymbol{q}_i^{(t)}(\boldsymbol{\theta})&\triangleq(q_i^{(t)}(\theta_1),q_i^{(t)}(\theta_2),\cdots,q_i^{(t)}(\theta_M))^T\in\mathbb{R}^M,\label{def_bold_q}\\
				\boldsymbol{J}_i(x,\boldsymbol{\theta})&\triangleq (J_i(x,\theta_1),J_i(x,\theta_2),\cdots,J_i(x,\theta_M))^T\in\mathbb{R}^M. \label{def-boldJi}
			\end{align}
			Then  the  expected cost function  (\ref{be_av})  averaging across  the belief   $q_i^{(t)}$ equals to   $\boldsymbol{q}_i^{(t)}(\boldsymbol{\theta})^T\boldsymbol{J}_i(x_i^{(t)},\boldsymbol{\theta})$, i.e.,
			$	\tilde{J}_i(x_i^{(t)},\boldsymbol{\theta})=\boldsymbol{q}_i^{(t)}(\boldsymbol{\theta})^T\boldsymbol{J}_i(x_i^{(t)},\boldsymbol{\theta})$ . We  re-denote
			$\tilde{J}_i(x_i^{(t)},\boldsymbol{\theta})$ as $F_i(x_i^{(t)},\boldsymbol{q}_i^{(t)})$ to clearly show its dependence on the decision $x_i^{(t)}$ and  the belief  $\boldsymbol{q}_i^{t}$, i.e.,
			\begin{equation}\label{def_F} F_i(x_i^{(t)},\boldsymbol{q}_i^{(t)})\triangleq\boldsymbol{q}_i^{(t)}(\boldsymbol{\theta})^T\boldsymbol{J}_i(x_i^{(t)},\boldsymbol{\theta}).
			\end{equation}
			Therefore, each agent's local cost function can be reformulate as $\boldsymbol{q}^*(\boldsymbol{\theta})^T\boldsymbol{J}_i(x,\boldsymbol{\theta})$, and  the original distributed objective function (\ref{problem}) can be rewritten as
			\begin{align}
				\min_{x\in \mathbb{R}} \frac{1}{N}\sum_{i=1}^{N} \boldsymbol{q}^*(\boldsymbol{\theta})^T\boldsymbol{J}_i(x,\boldsymbol{\theta}) \triangleq \min_{x\in \mathbb{R}} \frac{1}{N}\sum_{i=1}^{N} F_i(x, \boldsymbol{q}^*). \label{newfunc}
			\end{align}
			We denote by  $x^*( \boldsymbol{q})$ the optimal solution to  the optimization problem $\min_{x}  \frac{1}{N}\sum_{i=1}^{N} F_i(x, \boldsymbol{q})$, namely,
			\begin{align}\label{def-xstarq}x^*( {\boldsymbol{q}})={\arg\min}_{x\in \mathbb{R}}~\frac{1}{N}\sum_{i=1}^{N}F_i(x, {\boldsymbol{q}}).
			\end{align}
			Then  $  x^*(\boldsymbol{q}^*)=x_{*}$, which is the optimal solution   to the  problem  \eqref{problem}.
			Besides, step \eqref{decision_update} in Algorithm \ref{alg:CDSA} can be reformulated as
			\begin{align}
				x_i^{(t+1)}=\sum_{j=1}^{N}w_{ij} \left[x_j^{(t)}-\alpha^{(t)}\nabla_x F_j(x_j^{(t)},\boldsymbol{q}_j^{(t)})\right].\label{newx_iter}
			\end{align}
			
			In the following, we will show  that  the decision sequence $\{x_i^{(t)}\}_{t\geq0}$ for every agent $i$ converges to a common  solution
			$x^*(\tilde{\boldsymbol{q}})$ (convergence to the true optimal solution  $  x^*(\boldsymbol{q}^*)$ will be presented in later part), where $\tilde{\boldsymbol{q}}$ is given in Theorem \ref{lemma_be_con} .
			
			First of all,  the properties of the newly shaped function $F_i(x,\boldsymbol{q}_i)$ defined by \eqref{def_F} are summarized below, which can be obtained directly from \cite[Section 3.2.1]{cvbook} for the strongly convexity property and \cite{lips_smooth} for the Lipschitz smooth property.
			
			\begin{lemma}\label{lemma_allproperty}
				Let Assumption \ref{assum_func} hold. Then for all $i\in\mathcal{N}$ and for all $\boldsymbol{q}_i\in\mathbb{R}^M$, $F_i(x,\boldsymbol{q}_i)$   is strongly convex and Lipschitz smooth in $x$ with constant $\mu$ and $L$.
			\end{lemma}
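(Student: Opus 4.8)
The plan is to exploit the linearity of $F_i(x,\boldsymbol{q}_i)=\sum_{m=1}^M q_i(\theta_m)J_i(x,\theta_m)$ in the functions $J_i(\cdot,\theta_m)$, together with the fact that the belief vector $\boldsymbol{q}_i$ generated by Algorithm~\ref{alg:CDSA} always lies in the probability simplex, i.e.\ $q_i(\theta_m)\ge 0$ and $\sum_{m=1}^M q_i(\theta_m)=1$. First I would record that, since $\Theta$ is finite, differentiation in $x$ commutes with the finite sum, so $\nabla_x F_i(x,\boldsymbol{q}_i)=\sum_{m=1}^M q_i(\theta_m)\nabla_x J_i(x,\theta_m)$; in particular $F_i(\cdot,\boldsymbol{q}_i)$ inherits differentiability from the $J_i(\cdot,\theta_m)$, and each of the two stated inequalities will reduce to a weighted average of the corresponding inequality in Assumption~\ref{assum_func}.

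For the strong-convexity estimate, fix $x,x'\in\mathbb{R}$ and subtract gradients: $\bigl(\nabla_x F_i(x',\boldsymbol{q}_i)-\nabla_x F_i(x,\boldsymbol{q}_i)\bigr)^T(x'-x)=\sum_{m=1}^M q_i(\theta_m)\bigl(\nabla_x J_i(x',\theta_m)-\nabla_x J_i(x,\theta_m)\bigr)^T(x'-x)$. Bounding each summand below by $\mu\|x'-x\|^2$ via Assumption~\ref{assum_func}, and using $q_i(\theta_m)\ge 0$ with $\sum_m q_i(\theta_m)=1$, gives exactly $\mu\|x'-x\|^2$, which is the first-order characterization of $\mu$-strong convexity. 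The Lipschitz-smoothness claim is handled symmetrically: by the triangle inequality $\|\nabla_x F_i(x',\boldsymbol{q}_i)-\nabla_x F_i(x,\boldsymbol{q}_i)\|\le \sum_{m=1}^M q_i(\theta_m)\|\nabla_x J_i(x',\theta_m)-\nabla_x J_i(x,\theta_m)\|\le \sum_m q_i(\theta_m)L\|x'-x\|=L\|x'-x\|$, again because the weights are nonnegative and sum to one. One then invokes the cited references \cite{cvbook,lips_smooth} for the equivalence of these gradient inequalities with the usual definitions.

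The argument is essentially bookkeeping, so there is no genuine obstacle; the only point that deserves a line of care is that the constants $\mu$ and $L$ are \emph{not} inflated by a factor of $M$. This is exactly why the normalization $\sum_m q_i(\theta_m)=1$ (not merely $q_i(\theta_m)\in[0,1]$) is needed, and why the statement should be read with $\boldsymbol{q}_i$ in the probability simplex — which is the relevant regime, since every belief iterate of Algorithm~\ref{alg:CDSA} is a normalized distribution by construction of \eqref{b_up}--\eqref{q_up}. With both inequalities in hand the conclusion follows immediately.
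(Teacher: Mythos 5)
Your proposal is correct and is essentially the argument the paper relies on: the paper itself gives no written proof of Lemma \ref{lemma_allproperty}, deferring to \cite[Section 3.2.1]{cvbook} and \cite{lips_smooth} for the standard fact that a convex combination of $\mu$-strongly convex, $L$-smooth functions is again $\mu$-strongly convex and $L$-smooth; you have simply written out that convex-combination bookkeeping explicitly. Your added remark is also a genuine (minor) improvement on the paper's phrasing: as literally stated, ``for all $\boldsymbol{q}_i\in\mathbb{R}^M$'' is false (negative or unnormalized weights would destroy convexity or inflate the constants), and the lemma is only valid --- and only ever invoked --- for $\boldsymbol{q}_i$ in the probability simplex, which is guaranteed for every iterate by the normalizations in \eqref{b_up} and \eqref{q_up}.
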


	In the following, we will show the recursions on the optimization error $\|\bar{x}^{(t+1)}-x^*(\tilde{\boldsymbol{q}})\|$ in Lemma \ref{lemma_op}, and consensus error  $\|\boldsymbol{x}^{(t+1)}-\boldsymbol{1}\bar{x}^{(t+1)} \|$ in Lemma \ref{lemma_consens}. For the sake of simplicity, we give  some  more notations below.
		\begin{align}
			\boldsymbol{x}^{(t)}&\triangleq(x_1^{(t)},x_2^{(t)},\cdots,x_N^{(t)})^T\in\mathbb{R}^N, \label{mathbbx}\\
			\bar{x}^{(t)}&\triangleq\frac{1}{N}\sum_{i=1}^{N}x_i^{(t)}\in\mathbb{R}\label{barx},\\
			\boldsymbol{Q}^{(t)}&\triangleq (\boldsymbol{q}_1^{(t)}, \boldsymbol{q}_2^{(t)},\cdots,\boldsymbol{q}_N^{(t)})^T\in\mathbb{R}^{N\times M},\label{big_Q}\\
			 \frac{1}{N}\sum_{i=1}^{N} &F_i(x_i^{(t)}, \boldsymbol{q}_i^{(t)})\triangleq  \bar{F}(\boldsymbol{x}^{(t)}, \boldsymbol{Q}^{(t)})\in\mathbb{R}\label{full_func},\\ \mathbb{F}(\boldsymbol{x}^{(t)},\boldsymbol{Q}^{(t)})&\triangleq\left(F_1\left(x_1^{(t)},\boldsymbol{q}_1^{(t)}\right),F_2\left(x_2^{(t)},\boldsymbol{q}_2^{(t)}\right),\right.\notag\\
			&\qquad\left.\cdots,F_N\left(x_N^{(t)},\boldsymbol{q}_N^{(t)}\right)\right)^T\in\mathbb{R}^N\label{mathbbF}
		\end{align}
	
	\begin{lemma}\label{lemma_op}
		Let Assumptions \ref{assum_graph}, \ref{assum_step}, and \ref{assum_func} hold. Under Algorithm \ref{alg:CDSA}, supposing stepsize $\alpha^{(t)}<\frac{1}{2L}$, we can bound the gap between $\bar{x}^{(t+1)}$ and $x^*(\tilde{\boldsymbol{q}})$ as follows,
		\begin{align}
			\|&\bar{x}^{(t+1)}-x^*(\tilde{\boldsymbol{q}})\|\notag\\
			&\leq\sqrt{1-\alpha^{(t)}\mu(1-2L\alpha^{(t)})}\|\bar{x}^{(t)}-x^*(\tilde{\boldsymbol{q}})\|\notag\\
			&~+\frac{ [\alpha^{(t)}]^{0.5}L}{\sqrt{\mu N}} \|\boldsymbol{x}^{(t)}-\boldsymbol{1}\bar{x}^{(t)}\|+\frac{\sqrt{2}L\alpha^{(t)}}{\sqrt{N}} \|\boldsymbol{x}^{(t)}-\boldsymbol{1}\bar{x}^{(t)}\|\notag\\
			&~+\alpha^{(t)}\frac{1}{N}\sum_{i=1}^{N}\|\boldsymbol{q}_i^{(t)}-\tilde{\boldsymbol{q}} \| \| \nabla_x\boldsymbol{J}_i(x^*(\tilde{\boldsymbol{q}}),\boldsymbol{\theta})\|
		\end{align}
	 \end{lemma}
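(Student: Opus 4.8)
The plan is to start from the averaged form of the decision update. Averaging \eqref{newx_iter} over $i$ and using the double stochasticity of $W$ (Assumption \ref{assum_graph}), the consensus terms telescope and we obtain the standard centralized-looking recursion
\[
\bar{x}^{(t+1)}=\bar{x}^{(t)}-\frac{\alpha^{(t)}}{N}\sum_{j=1}^{N}\nabla_x F_j(x_j^{(t)},\boldsymbol{q}_j^{(t)}).
\]
The goal is to compare this to a gradient step for the surrogate objective $\frac{1}{N}\sum_j F_j(\cdot,\tilde{\boldsymbol{q}})$ whose minimizer is $x^*(\tilde{\boldsymbol{q}})$. So I would write $\frac1N\sum_j\nabla_xF_j(x_j^{(t)},\boldsymbol{q}_j^{(t)})$ and split it, via the triangle inequality after squaring, into three contributions: (i) the ``ideal'' term $\frac1N\sum_j\nabla_xF_j(\bar{x}^{(t)},\tilde{\boldsymbol{q}})$; (ii) a consensus-error term coming from replacing each $x_j^{(t)}$ by $\bar{x}^{(t)}$, which by $L$-Lipschitz smoothness (Lemma \ref{lemma_allproperty}) is controlled by $\frac{L}{\sqrt N}\|\boldsymbol{x}^{(t)}-\boldsymbol{1}\bar{x}^{(t)}\|$; (iii) a belief-error term from replacing each $\boldsymbol{q}_j^{(t)}$ by $\tilde{\boldsymbol{q}}$. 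For (iii), because $F_j(x,\boldsymbol{q})=\boldsymbol{q}^T\boldsymbol{J}_j(x,\boldsymbol{\theta})$ is linear in $\boldsymbol{q}$, the gradient difference is exactly $(\boldsymbol{q}_j^{(t)}-\tilde{\boldsymbol{q}})^T\nabla_x\boldsymbol{J}_j(x_j^{(t)},\boldsymbol{\theta})$; I would further replace $x_j^{(t)}$ by $x^*(\tilde{\boldsymbol{q}})$ inside $\nabla_x\boldsymbol{J}_j$, absorbing the discrepancy into the consensus and optimization errors via smoothness, which is where the mixed terms $\frac{\sqrt2 L\alpha^{(t)}}{\sqrt N}\|\boldsymbol{x}^{(t)}-\boldsymbol{1}\bar{x}^{(t)}\|$ and the $\|\boldsymbol{q}_i^{(t)}-\tilde{\boldsymbol{q}}\|\,\|\nabla_x\boldsymbol{J}_i(x^*(\tilde{\boldsymbol{q}}),\boldsymbol{\theta})\|$ term originate.

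For the ``ideal'' part, I would use the classical one-step contraction for gradient descent on a $\mu$-strongly convex, $L$-smooth function: writing $g(\bar{x}^{(t)})\triangleq\frac1N\sum_j\nabla_xF_j(\bar{x}^{(t)},\tilde{\boldsymbol{q}})$ and noting $g(x^*(\tilde{\boldsymbol{q}}))=0$, one expands
\[
\|\bar{x}^{(t)}-\alpha^{(t)}g(\bar{x}^{(t)})-x^*(\tilde{\boldsymbol{q}})\|^2
=\|\bar{x}^{(t)}-x^*(\tilde{\boldsymbol{q}})\|^2-2\alpha^{(t)}\langle g(\bar{x}^{(t)}),\bar{x}^{(t)}-x^*(\tilde{\boldsymbol{q}})\rangle+[\alpha^{(t)}]^2\|g(\bar{x}^{(t)})\|^2,
\]
then bound the cross term below by $\mu\|\bar{x}^{(t)}-x^*(\tilde{\boldsymbol{q}})\|^2$ (strong convexity of the averaged surrogate, which inherits constant $\mu$ by Lemma \ref{lemma_allproperty}) and $\|g(\bar{x}^{(t)})\|^2=\|g(\bar{x}^{(t)})-g(x^*(\tilde{\boldsymbol{q}}))\|^2\le L^2\|\bar{x}^{(t)}-x^*(\tilde{\boldsymbol{q}})\|^2$ by smoothness. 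Combining, and using $\alpha^{(t)}<\tfrac1{2L}$ so that $1-2L\alpha^{(t)}>0$, gives a contraction factor $1-\alpha^{(t)}\mu(1-2L\alpha^{(t)})$ on the squared distance; taking square roots yields the $\sqrt{1-\alpha^{(t)}\mu(1-2L\alpha^{(t)})}\|\bar{x}^{(t)}-x^*(\tilde{\boldsymbol{q}})\|$ leading term. (A cleaner route is to bound $\|\bar x^{(t)}-\alpha^{(t)}g(\bar x^{(t)})-x^*(\tilde{\boldsymbol q})\|\le\sqrt{1-\alpha^{(t)}\mu(1-2L\alpha^{(t)})}\|\bar x^{(t)}-x^*(\tilde{\boldsymbol q})\|$ directly and then add the three perturbation terms by the triangle inequality, which matches the stated form with the $[\alpha^{(t)}]^{0.5}L/\sqrt{\mu N}$ coefficient arising from a Young's-inequality split of a cross term rather than a pure triangle inequality.)

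The one genuinely delicate point is matching the exact coefficients in the claimed bound: the appearance of the half-power $[\alpha^{(t)}]^{0.5}$ in front of $\|\boldsymbol{x}^{(t)}-\boldsymbol{1}\bar{x}^{(t)}\|/\sqrt{\mu N}$ tells me the authors do not simply triangle-inequality the consensus term through the map $z\mapsto z-\alpha^{(t)}g(z)$ (that would give an $\alpha^{(t)}$, not $[\alpha^{(t)}]^{0.5}$); instead they keep the squared recursion, expand the cross term between $\bar{x}^{(t)}-x^*(\tilde{\boldsymbol{q}})$ and the consensus perturbation, and apply Young's inequality $2ab\le \mu\alpha^{(t)} a^2 + \frac{1}{\mu\alpha^{(t)}}b^2$ (or similar), trading a factor into the contraction coefficient and leaving $[\alpha^{(t)}]^{1/2}$ outside after taking the square root. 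So the main obstacle is bookkeeping: carefully choosing how to split each cross term so that the contraction constant stays exactly $1-\alpha^{(t)}\mu(1-2L\alpha^{(t)})$ while the perturbation coefficients come out exactly as stated; the individual ingredients (double stochasticity, strong convexity, $L$-smoothness, linearity of $F_i$ in $\boldsymbol q$) are all routine and already available from Assumptions \ref{assum_graph}, \ref{assum_func} and Lemma \ref{lemma_allproperty}.
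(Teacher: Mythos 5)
Your overall skeleton matches the paper's: average the update over $i$ using double stochasticity, introduce the optimality condition for $x^*(\tilde{\boldsymbol{q}})$, split the error into an optimization contraction, a consensus-gradient perturbation, and a belief perturbation, and exploit that $F_i(x,\boldsymbol{q})=\boldsymbol{q}^T\boldsymbol{J}_i(x,\boldsymbol{\theta})$ is linear in $\boldsymbol{q}$. However, two of your concrete steps would not reproduce the stated bound. First, the contraction factor: bounding the cross term by $\mu\|\bar{x}^{(t)}-x^*(\tilde{\boldsymbol{q}})\|^2$ and $\|g(\bar{x}^{(t)})\|^2\le L^2\|\bar{x}^{(t)}-x^*(\tilde{\boldsymbol{q}})\|^2$ yields the squared factor $1-2\alpha^{(t)}\mu+[\alpha^{(t)}]^2L^2$, and under the sole condition $\alpha^{(t)}<\tfrac{1}{2L}$ this is \emph{not} dominated by $1-\alpha^{(t)}\mu(1-2L\alpha^{(t)})$ unless $L\le 4\mu$ (indeed it need not even be below $1$ when $\mu\ll L$). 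The paper obtains the exact factor differently: it never uses the crude $L^2$ bound on the ideal part, but instead applies the co-coercivity inequality $\|\nabla f(x)-\nabla f(y)\|^2\le L\,(x-y)^T(\nabla f(x)-\nabla f(y))$ (Nesterov, Eq.\ (2.1.8)) to Term~3 together with strong convexity, which produces $-2\alpha^{(t)}(1-\alpha^{(t)}L)\mu\|\bar{x}^{(t)}-x^*(\tilde{\boldsymbol{q}})\|^2$, and the remaining $+\alpha^{(t)}\mu\|\bar{x}^{(t)}-x^*(\tilde{\boldsymbol{q}})\|^2$ comes from the Young split of the consensus cross term (which is also where your $[\alpha^{(t)}]^{0.5}$ coefficient correctly originates); summing gives exactly $1-\alpha^{(t)}\mu+2[\alpha^{(t)}]^2\mu L$.

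Second, the order of your substitutions in the belief term matters. You swap $\boldsymbol{q}_j^{(t)}\to\tilde{\boldsymbol{q}}$ at the point $x_j^{(t)}$ and then move the argument to $x^*(\tilde{\boldsymbol{q}})$, "absorbing the discrepancy via smoothness"; but that discrepancy is of the form $\|\boldsymbol{q}_j^{(t)}-\tilde{\boldsymbol{q}}\|\cdot\sqrt{M}L\,\|x_j^{(t)}-x^*(\tilde{\boldsymbol{q}})\|$, a product of belief error and distance to the optimum, and since the lemma is a deterministic per-iteration bound you cannot fold such a term into the stated consensus/optimization terms without altering their coefficients (or invoking the eventual convergence of $\boldsymbol{q}_j^{(t)}$, which is not available here). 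The paper's decomposition avoids this entirely by swapping in the opposite order: it first changes the $x$-argument while keeping the beliefs at $\boldsymbol{Q}^{(t)}$ (strong convexity and smoothness hold for every fixed belief by Lemma \ref{lemma_allproperty}), and only at the fixed point $x^*(\tilde{\boldsymbol{q}})$ does it replace $\boldsymbol{Q}^{(t)}$ by $\boldsymbol{1}\otimes\tilde{\boldsymbol{q}}^T$, where linearity in $\boldsymbol{q}$ gives exactly $\frac{1}{N}\sum_i(\boldsymbol{q}_i^{(t)}-\tilde{\boldsymbol{q}})^T\nabla_x\boldsymbol{J}_i(x^*(\tilde{\boldsymbol{q}}),\boldsymbol{\theta})$ with no leftover term. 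With these two repairs — co-coercivity plus the Young split for the contraction, and the reversed substitution order for the belief error — your argument becomes the paper's proof.
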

		
		\begin{proof}
			By using the optimality condition of the unconstrained optimization problem  \eqref{newfunc}, we have $ \frac{1}{N}\sum_{i=1}^{N} \nabla_xF_i(x^*(\tilde{\boldsymbol{q}}), \tilde{\boldsymbol{q}})=\nabla_x\bar{F}(\boldsymbol{1}x^*(\tilde{\boldsymbol{q}}),\boldsymbol{1}\otimes\tilde{\boldsymbol{q}}^T)=0.$ Then by using   iteration of $x_i^{(t)}$ in \eqref{newx_iter}, and the definition of $\bar{x}^{(t)}$ and $\bar{F}$ in \eqref{barx} and \eqref{full_func}, we have
			\begin{align}
				&\|\bar{x}^{(t+1)}\!-\! x^*(\tilde{\boldsymbol{q}})\|\!=\!\Big\|\frac{1}{N}\!\sum_{i=1}^{N}\!\sum_{j=1}^{N}\!w_{ij}\big[x_j^{(t)}\!-\!\alpha^{(t)}\nabla_xF_j(x_j^{(t)},\boldsymbol{q}_j^{(t)})\big]\notag\\
				&\qquad-\big[x^*(\tilde{\boldsymbol{q}})-\alpha^{(t)}\nabla_x \bar{F}(\boldsymbol{1}x^*(\tilde{\boldsymbol{q}}),\boldsymbol{1}\otimes\tilde{\boldsymbol{q}}^T)\big]\Big\|\notag\\\notag
			&= \Big\|\ \bar{x}^{(t)}-\alpha^{(t)}\nabla_x \bar{F}(\boldsymbol{x}^{(t)},\boldsymbol{Q}^{(t)})\\\notag
			&\qquad-x^*(\tilde{\boldsymbol{q}})+\alpha^{(t)}\nabla_x \bar{F}(\boldsymbol{1}x^*(\tilde{\boldsymbol{q}}),\boldsymbol{1}\otimes\tilde{\boldsymbol{q}}^T)\Big\|\\\notag
			&\leq\! \big\|\bar{x}^{(t)}- x^*(\tilde{\boldsymbol{q}})\\ \notag		
			&\quad-\alpha^{(t)}\left( \nabla_x \bar{F}(\boldsymbol{x}^{(t)},\boldsymbol{Q}^{(t)})- \nabla_x \bar{F}(\boldsymbol{1}x^*(\tilde{\boldsymbol{q}}),\boldsymbol{Q}^{(t)})\!\right)\notag\\
			&\quad-\alpha^{(t)}\left( \nabla_x\bar{F}(\boldsymbol{1}x^*(\tilde{\boldsymbol{q}}),\boldsymbol{Q}^{(t)})-\nabla_x\bar{F}(\boldsymbol{1}x^*(\tilde{\boldsymbol{q}}),\boldsymbol{1}\otimes\tilde{\boldsymbol{q}}^T)\right)\big\|\notag\\
			&\leq\! \big\|\bar{x}^{(t)}- x^*(\tilde{\boldsymbol{q}}) \label{barxx}\\		
			&~~\qquad-\alpha^{(t)}\left( \nabla_x \bar{F}(\boldsymbol{x}^{(t)},\boldsymbol{Q}^{(t)})- \nabla_x \bar{F}(\boldsymbol{1}x^*(\tilde{\boldsymbol{q}}),\boldsymbol{Q}^{(t)})\!\right)\big\|\notag\\
			&\quad+\alpha^{(t)}\big\| \nabla_x\bar{F}(\boldsymbol{1}x^*(\tilde{\boldsymbol{q}}),\boldsymbol{Q}^{(t)})
\-\nabla_x\bar{F}(\boldsymbol{1}x^*(\tilde{\boldsymbol{q}}),\boldsymbol{1}\otimes\tilde{\boldsymbol{q}}^T)\big\|,\notag
			\end{align}
		where the second equality holds by using $\frac{1}{N}\!\sum_{i=1}^{N}\! \!w_{ij}=1$, and the last equality utilizes the triangle inequality.
		
		The first term in the right-hand side of \eqref{barxx} can be further bounded by first writing the following expansion:
		\begin{align}
			&\big\|\bar{x}^{(t)}- x^*(\tilde{\boldsymbol{q}}) \notag\\		
			&\qquad-\alpha^{(t)}\left( \nabla_x \bar{F}(\boldsymbol{x}^{(t)},\boldsymbol{Q}^{(t)})- \nabla_x \bar{F}(\boldsymbol{1}x^*(\tilde{\boldsymbol{q}}),\boldsymbol{Q}^{(t)})\!\right)\big\|^2\notag\\
			&=\|\bar{x}^{(t)}- x^*(\tilde{\boldsymbol{q}})\|^2\notag\\
			&~-2\alpha^{(t)}\!(\bar{x}^{(t)}\!-\! x^*(\tilde{\boldsymbol{q}}))^T\!\!\left(\!\nabla_x \bar{F}(\boldsymbol{x}^{(t)},\!\boldsymbol{Q}^{(t)}\!)\!-\!\nabla_x \bar{F}(\boldsymbol{1}x^*(\tilde{\boldsymbol{q}}),\boldsymbol{Q}^{(t)}\!)\!\right)\notag\\
			&~+[\alpha^{(t)}]^2\big\|\nabla_x \bar{F}(\boldsymbol{x}^{(t)},\boldsymbol{Q}^{(t)})- \nabla_x \bar{F}(\boldsymbol{1}x^*(\tilde{\boldsymbol{q}}),\boldsymbol{Q}^{(t)})\big\|^2\notag\\
			&\leq \|\bar{x}^{(t)}- x^*(\tilde{\boldsymbol{q}})\|^2 \label{termf1}\\
			&~\underbrace{-2\alpha^{(t)}(\bar{x}^{(t)}-\! x^*(\tilde{\boldsymbol{q}}))^T(\nabla_x\bar{F}(\boldsymbol{x}^{(t)},\boldsymbol{Q}^{(t)})-\!\nabla_x\bar{F}(\boldsymbol{1}\bar{x}^{(t)},\boldsymbol{Q}^{(t)}))}_{Term~1}\notag\\
			&~\underbrace{+2[\alpha^{(t)}]^2\|\nabla_x\bar{F}(\boldsymbol{x}^{(t)},\boldsymbol{Q}^{(t)})-\nabla_x\bar{F}(\boldsymbol{1}\bar{x}^{(t)},\boldsymbol{Q}^{(t)})\|^2}_{Term~2}\notag\\
			&~\underbrace{\begin{aligned}
					+2[\alpha^{(t)}]^2\|\nabla_x\bar{F}(\boldsymbol{1}\bar{x}^{(t)},\boldsymbol{Q}^{(t)})-\nabla_x\bar{F}(\boldsymbol{1}x^*(\tilde{\boldsymbol{q}}),\boldsymbol{Q}^{(t)})\|^2\\
					-2\alpha^{(t)}\!(\bar{x}^{(t)}\!-\! x^*(\tilde{\boldsymbol{q}}))^T\!(\nabla_x\bar{F}(\boldsymbol{1}\bar{x}^{(t)},\!\boldsymbol{Q}^{(t)})\!-\!\nabla_x\bar{F}\!(\boldsymbol{1}x^*(\tilde{\boldsymbol{q}}),\boldsymbol{Q}^{(t)}))
				\end{aligned}}_{Term~3}\notag
		\end{align}
	(Plus $Term~3$ contains two terms) where the last equality is obtained by adding and subtracting the same terms and together with  $(a+b)^2\leq 2a^2+2b^2$.

	Recalling from the definition of $\bar{F}$ in \eqref{full_func}  and together with the triangle  equality $\|\sum_{i=1}^{N}z_i\|\leq \sum_{i=1}^{N}\|z_i\|$, we have
	\begin{align} &\|\nabla_x\bar{F}(\boldsymbol{x}^{(t)},\boldsymbol{Q}^{(t)})-\!\nabla_x\bar{F}(\boldsymbol{1}\bar{x}^{(t)},\boldsymbol{Q}^{(t)})\|\notag\\ &=\big\|\frac{1}{N}\sum_{i=1}^{N}\left(F_i(x_i^{(t)},\boldsymbol{q}_i^{(t)})-F_i(\bar{x}^{(t)},\boldsymbol{q}_i^{(t)})\right)\big\|\notag\\
		&\leq \frac{1}{N}\sum_{i=1}^{N} \|F_i(x_i^{(t)},\boldsymbol{q}_i^{(t)})-F_i(\bar{x}^{(t)},\boldsymbol{q}_i^{(t)})\|\notag\\
		&\leq \frac{1}{N}\sum_{i=1}^{N} L\|x_i^{(t)}-\bar{x}^{(t)}\| \label{barFL},
	\end{align}
where the last inequality uses the Lipschitz smoothness of $F_i(x,\boldsymbol{q}_i)$ to $x$ in Lemma \ref{lemma_allproperty}. Therefore, based on the Cauchy-Schwartz inequality, we can bound Term 1 as follows
 \begin{align}
 	&Term~1 \leq 2\alpha^{(t)}\|\bar{x}^{(t)}-\! x^*(\tilde{\boldsymbol{q}})\|\notag\\
 	&\qquad\times\|\nabla_x\bar{F}(\boldsymbol{x}^{(t)},\boldsymbol{Q}^{(t)})-\!\nabla_x\bar{F}(\boldsymbol{1}\bar{x}^{(t)},\boldsymbol{Q}^{(t)})\|\notag\\
 	&=2\! \left(\![\alpha^{(t)}]^{0.5}\mu^{0.5}\|\bar{x}^{(t)}-x^*(\tilde{\boldsymbol{q}})\|\!\right)\! \Big(\!\tfrac{[\alpha^{(t)}]^{0.5}L}{\mu^{0.5}N}\!\sum_{i=1}^{N}\!\|x_i^{(t)}\!-\bar{x}^{(t)}\|\!\Big)\notag\\
 	&\leq \alpha^{(t)}\mu\|\bar{x}^{(t)}-x^*(\tilde{\boldsymbol{q}})\|^2+\frac{\alpha^{(t)}L^2}{\mu N^2} \times N\sum_{i=1}^{N}\|x_i^{(t)}-\bar{x}^{(t)}\|^2\notag\\
 	&=\alpha^{(t)}\mu\|\bar{x}^{(t)}-x^*(\tilde{\boldsymbol{q}})\|^2+\frac{\alpha^{(t)}L^2}{\mu N} \|\boldsymbol{x}^{(t)}-\boldsymbol{1}\bar{x}^{(t)}\|^2\label{term1_1},
 \end{align}
where the penultimate inequality is followed by $2ab\leq a^2+b^2$ for all $a,b>0$ and $(\sum_{i=1}^{N}\|z_i\|)^2\leq N\sum_{i=1}^{N}\|z_i\|^2$.

As for Term 2,  by using \eqref{barFL}, we achieve
\begin{align}
	Term~2&\leq 2[\alpha^{(t)}]^2  \left(\frac{L}{N}\sum_{i=1}^{N}\|x_i^{(t)}-\bar{x}^{(t)}\|\right)^2\notag\\
	&\leq 2[\alpha^{(t)}]^2 \frac{L^2}{N}\sum_{i=1}^{N}\|x_i^{(t)}-\bar{x}^{(t)}\|^2\notag\\
	&=\frac{2L^2[\alpha^{(t)}]^2}{N} \|\boldsymbol{x}^{(t)}-\boldsymbol{1}\bar{x}^{(t)}\|^2\label{term1_2}.
\end{align}

Recalling the definition of $\bar{F}$ in \eqref{full_func} and the Lipschitz smooth property of $F_i$ in Lemma \ref{lemma_allproperty}, we have
\begin{align}\label{lipsF} &\|\nabla_x\bar{F}(\boldsymbol{1}\bar{x}^{(t)},\boldsymbol{Q}^{(t)})-\nabla_x\bar{F}(\boldsymbol{1}x^*(\tilde{\boldsymbol{q}}),\boldsymbol{Q}^{(t)})\|^2\notag\\ &=\|\frac{1}{N}\sum_{i=1}^{N}\left(\nabla_xF_i(\bar{x}^{(t)},\boldsymbol{q}_i^{(t)})-\nabla_xF_i(x^*(\tilde{\boldsymbol{q}}),\boldsymbol{q}_i^{(t)})\right)\|^2\notag\\
	&\leq \frac{1}{N}\sum_{i=1}^{N} \|\nabla_xF_i(\bar{x}^{(t)},\boldsymbol{q}_i^{(t)})-\nabla_xF_i(x^*(\tilde{\boldsymbol{q}}),\boldsymbol{q}_i^{(t)})\|^2\\
	&\leq\!\tfrac{L}{N}\!\sum_{i=1}^{N} \! (\bar{x}^{(t)}\!-\! x^*(\tilde{\boldsymbol{q}}))^T\!(\nabla_xF_i(\bar{x}^{(t)},\!\boldsymbol{q}_i^{(t)})\!
-\!\nabla_xF_i(x^*(\tilde{\boldsymbol{q}}),\boldsymbol{q}_i^{(t)}\!)\!) \notag
\end{align}
where the last inequality is followed by the Lipschitz smooth properties \cite[Equation (2.1.8)]{nesterov2013introductory}.

In addition, based on the strong convexity of $F_i$ in Lemma \ref{lemma_allproperty}, we have
\begin{align}
	(\bar{x}^{(t)}- &x^*(\tilde{\boldsymbol{q}}))^T\!(\nabla_xF_i(\bar{x}^{(t)},\!\boldsymbol{q}_i^{(t)})\!-\!\nabla_xF_i(x^*(\tilde{\boldsymbol{q}}),\boldsymbol{q}_i^{(t)}\!)\!)\notag\\
	&\geq \mu \|\bar{x}^{(t)}-x^*(\tilde{\boldsymbol{q}})\|^2\label{convexF}
\end{align}

By recalling the definition of $\bar{F}$ in \eqref{full_func}  and using
  \eqref{lipsF}, we can further bound Term 3 as follows
\begin{align}
	&Term~3\leq \notag\\
	&\!\tfrac{2 [\alpha^{(t)}]^2\! L}{N}\!\sum_{i=1}^{N} \! (\bar{x}^{(t)}\!-\! x^*(\tilde{\boldsymbol{q}}))^T\!(\nabla_xF_i(\bar{x}^{(t)},\!\boldsymbol{q}_i^{(t)})\!-\!\nabla_xF_i(x^*(\tilde{\boldsymbol{q}}),\boldsymbol{q}_i^{(t)}\!)\!)\notag\\
	&\!-\!\tfrac{2\alpha^{(t)}}{N}\! (\bar{x}^{(t)}\!-\! x^*(\tilde{\boldsymbol{q}}))^T\!\sum_{i=1}^{N}(\nabla_xF_i(\bar{x}^{(t)},\!\boldsymbol{q}_i^{(t)})\!-\!\nabla_xF_i(x^*(\tilde{\boldsymbol{q}}),\boldsymbol{q}_i^{(t)}\!)\!)\notag\\
	&=-\tfrac{2\alpha^{(t)}}{N}(1-\alpha^{(t)}L)\notag\\
	&~\times \sum_{i=1}^{N} \! (\bar{x}^{(t)}\!-\! x^*(\tilde{\boldsymbol{q}}))^T\!(\nabla_xF_i(\bar{x}^{(t)},\!\boldsymbol{q}_i^{(t)})\!-\!\nabla_xF_i(x^*(\tilde{\boldsymbol{q}}),\boldsymbol{q}_i^{(t)}\!))\notag\\
	& \leq  -\tfrac{2\alpha^{(t)}}{N}(1-\alpha^{(t)}L) \sum_{i=1}^{N}\mu\|\bar{x}^{(t)}-x^*(\tilde{\boldsymbol{q}})\|^2\notag\\
	&=-2\alpha^{(t)}(1-\alpha^{(t)}L)\mu\|\bar{x}^{(t)}-x^*(\tilde{\boldsymbol{q}})\|^2\label{term1_3},
\end{align}
where the last inequality holds by using  \eqref{convexF} and $ -\tfrac{2\alpha^{(t)}}{N}(1-\alpha^{(t)}L) <0$ since  $\alpha^{(t)}<\frac{1}{2L}.$

Then by substituting \eqref{term1_1}, \eqref{term1_2}, and \eqref{term1_3} into \eqref{termf1}, we get
\begin{align}
	&\big\|\bar{x}^{(t)}- x^*(\tilde{\boldsymbol{q}}) \notag\\		
	&\qquad-\alpha^{(t)}\left( \nabla_x \bar{F}(\boldsymbol{x}^{(t)},\boldsymbol{Q}^{(t)})- \nabla_x \bar{F}(\boldsymbol{1}x^*(\tilde{\boldsymbol{q}}),\boldsymbol{Q}^{(t)})\!\right)\big\|^2\notag\\
	&\leq (1-\alpha^{(t)}\mu+2[\alpha^{(t)}]^2\mu L)\|\bar{x}^{(t)}-x^*(\tilde{\boldsymbol{q}})\|^2\notag\\
	&~+\frac{\alpha^{(t)}L^2}{\mu N} \|\boldsymbol{x}^{(t)}-\boldsymbol{1}\bar{x}^{(t)}\|^2+\frac{2L^2[\alpha^{(t)}]^2}{N} \|\boldsymbol{x}^{(t)}-\boldsymbol{1}\bar{x}^{(t)}\|^2\notag.
\end{align}

Since $\sqrt{a+b}\leq \sqrt{a}+\sqrt{b}$ for all $a,b\geq0$, the first term on the right hand side of \eqref{barxx} can be bounded by
\begin{align}
	&\big\|\bar{x}^{(t)}- x^*(\tilde{\boldsymbol{q}}) \notag\\		
	&\qquad-\alpha^{(t)}\left( \nabla_x \bar{F}(\boldsymbol{x}^{(t)},\boldsymbol{Q}^{(t)})- \nabla_x \bar{F}(\boldsymbol{1}x^*(\tilde{\boldsymbol{q}}),\boldsymbol{Q}^{(t)})\!\right)\big\|\notag\\
	&\leq \sqrt{1-\alpha^{(t)}\mu+2[\alpha^{(t)}]^2\mu L}\|\bar{x}^{(t)}-x^*(\tilde{\boldsymbol{q}})\|\label{termf11}\\
	&~+\frac{ [\alpha^{(t)}]^{0.5}L}{\sqrt{\mu N}} \|\boldsymbol{x}^{(t)}-\boldsymbol{1}\bar{x}^{(t)}\|+\frac{\sqrt{2}L\alpha^{(t)}}{\sqrt{N}} \|\boldsymbol{x}^{(t)}-\boldsymbol{1}\bar{x}^{(t)}\|\notag
\end{align}

Consider the second term on the right-hand side of \eqref{barxx}. Recalling the definition of newly shaped function in \eqref{newfunc} and \eqref{full_func}, we have
\begin{align}
	&\alpha^{(t)}\big\| \nabla_x\bar{F}(\boldsymbol{1}x^*(\tilde{\boldsymbol{q}}),\boldsymbol{Q}^{(t)})-\nabla_x\bar{F}(\boldsymbol{1}x^*(\tilde{\boldsymbol{q}}),\boldsymbol{1}\otimes\tilde{\boldsymbol{q}}^T)\big\|\notag\\
	&=\alpha^{(t)}\big\| \frac{1}{N}\sum_{i=1}^{N}\left(\nabla_xF_i(x^*(\tilde{\boldsymbol{q}}),\boldsymbol{q}_i^{(t)})-\nabla_xF_i (x^*(\tilde{\boldsymbol{q}}), \tilde{\boldsymbol{q}})\right)\big\|\notag\\
	&=\alpha^{(t)}\big\|\frac{1}{N}\sum_{i=1}^{N}\left(\boldsymbol{q}_i^{(t)}-\tilde{\boldsymbol{q}}\right)^T\nabla_x\boldsymbol{J}_i(x^*(\tilde{\boldsymbol{q}}),\boldsymbol{\theta})\big\|\notag\\
&\leq \alpha^{(t)}\frac{1}{N}\sum_{i=1}^{N}\|\boldsymbol{q}_i^{(t)}-\tilde{\boldsymbol{q}} \| \| \nabla_x\boldsymbol{J}_i(x^*(\tilde{\boldsymbol{q}}),\boldsymbol{\theta})\|\label{termf2}.
\end{align}
Substituting \eqref{termf11} and \eqref{termf2} into \eqref{barxx} yields the lemma.
		\end{proof}

			In the following lemma, we establish the   recursion for the   consensus error $\left\|\boldsymbol{x}^{(t+1)}-\boldsymbol{1} \bar{x}^{(t+1)}\right\|^2$.

			\begin{lemma}\label{lemma_consens}
				Let Assumptions  \ref{assum_graph} and  \ref{assum_func} hold.  We  then have
				\begin{align}\label{consen_error}
					&	\left\|\boldsymbol{x}^{(t+1)}-\right.\left.\boldsymbol{1} \bar{x}^{(t+1)}\right\|^2 \leqslant \tfrac{3+\rho_w^2}{4}\left\|\boldsymbol{x}^{(t)}-\boldsymbol{1} \bar{x}^{(t)}\right\|^2\\
					&+\tfrac{3 \rho_w^2\left[\alpha^{(t)}\right]^2}{1-\rho_w^2} \!\!\Bigg[2 M^2\! L^2	\|\boldsymbol{x}^{(t)}-\boldsymbol{1}x^*(\tilde{\boldsymbol{q}})\|^2\!\notag\\
					&\qquad \qquad \quad+2 M\sum_{i=1}^N\|\nabla_x \boldsymbol{J}_i(x^*(\tilde{\boldsymbol{q}}), \boldsymbol{\theta})\|^2\Bigg]\!,\notag
				\end{align}
				where $\rho_w$ is the spectral radius of  $W-\frac{\boldsymbol{1}\boldsymbol{1}^{\top}}{N}$.
			\end{lemma}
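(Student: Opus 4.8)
The plan is to start from the vector form of the decision update \eqref{newx_iter}, which reads $\boldsymbol{x}^{(t+1)} = W\bigl(\boldsymbol{x}^{(t)} - \alpha^{(t)} \mathbb{G}^{(t)}\bigr)$ where $\mathbb{G}^{(t)}$ collects the local gradients $\nabla_x F_j(x_j^{(t)}, \boldsymbol{q}_j^{(t)})$. Left-multiplying by $I - \frac{\boldsymbol{1}\boldsymbol{1}^T}{N}$ and using $\bigl(I - \frac{\boldsymbol{1}\boldsymbol{1}^T}{N}\bigr)W = W - \frac{\boldsymbol{1}\boldsymbol{1}^T}{N} = \bigl(W - \frac{\boldsymbol{1}\boldsymbol{1}^T}{N}\bigr)\bigl(I - \frac{\boldsymbol{1}\boldsymbol{1}^T}{N}\bigr)$ (double stochasticity, Assumption \ref{assum_graph}), I would write
\[
\boldsymbol{x}^{(t+1)} - \boldsymbol{1}\bar{x}^{(t+1)} = \Bigl(W - \tfrac{\boldsymbol{1}\boldsymbol{1}^T}{N}\Bigr)\Bigl[\bigl(\boldsymbol{x}^{(t)} - \boldsymbol{1}\bar{x}^{(t)}\bigr) - \alpha^{(t)}\bigl(\mathbb{G}^{(t)} - \boldsymbol{1}\bar{\mathbb{G}}^{(t)}\bigr)\Bigr],
\]
so that $\|\boldsymbol{x}^{(t+1)} - \boldsymbol{1}\bar{x}^{(t+1)}\| \le \rho_w\bigl(\|\boldsymbol{x}^{(t)} - \boldsymbol{1}\bar{x}^{(t)}\| + \alpha^{(t)}\|\mathbb{G}^{(t)} - \boldsymbol{1}\bar{\mathbb{G}}^{(t)}\|\bigr)$ by the spectral bound in Lemma \ref{W_lim}.

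Next I would square this and apply Young's inequality in the form $(a+b)^2 \le (1+\eta)a^2 + (1+\eta^{-1})b^2$ with a carefully chosen $\eta$ depending on $\rho_w$ — the standard trick is to pick $\eta$ so that $\rho_w^2(1+\eta) = \frac{1+\rho_w^2}{2}$, but to land on the stated coefficient $\frac{3+\rho_w^2}{4}$ I expect one uses $\eta = \frac{1-\rho_w^2}{3\rho_w^2}$, giving $\rho_w^2(1+\eta) = \rho_w^2 + \frac{1-\rho_w^2}{3} = \frac{2\rho_w^2+1}{3}$; more likely the intended splitting comes from bounding $\|\mathbb{G}^{(t)}\|$ itself by a sum of three pieces and distributing a factor $3$, which is consistent with the $\frac{3\rho_w^2[\alpha^{(t)}]^2}{1-\rho_w^2}$ prefactor in the claim. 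In any case, the cross term is absorbed to produce the contraction coefficient on $\|\boldsymbol{x}^{(t)} - \boldsymbol{1}\bar{x}^{(t)}\|^2$ and a $\frac{\rho_w^2[\alpha^{(t)}]^2}{1-\rho_w^2}$-weighted term involving $\|\mathbb{G}^{(t)} - \boldsymbol{1}\bar{\mathbb{G}}^{(t)}\|^2 \le \|\mathbb{G}^{(t)}\|^2$.

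It then remains to bound $\|\mathbb{G}^{(t)}\|^2 = \sum_{i=1}^N \|\nabla_x F_i(x_i^{(t)}, \boldsymbol{q}_i^{(t)})\|^2$. For each $i$ I would insert $\pm \nabla_x F_i(x^*(\tilde{\boldsymbol{q}}), \boldsymbol{q}_i^{(t)}) \pm \nabla_x F_i(x^*(\tilde{\boldsymbol{q}}), \tilde{\boldsymbol{q}})$ and use $\|a+b\|^2 \le 2\|a\|^2 + 2\|b\|^2$ repeatedly; the first difference is controlled by the $L$-Lipschitz smoothness of $F_i$ in $x$ (Lemma \ref{lemma_allproperty}) giving $L\|x_i^{(t)} - x^*(\tilde{\boldsymbol{q}})\|$, while $\nabla_x F_i(x^*(\tilde{\boldsymbol{q}}), \boldsymbol{q}_i^{(t)}) - \nabla_x F_i(x^*(\tilde{\boldsymbol{q}}), \tilde{\boldsymbol{q}}) = (\boldsymbol{q}_i^{(t)} - \tilde{\boldsymbol{q}})^T \nabla_x \boldsymbol{J}_i(x^*(\tilde{\boldsymbol{q}}), \boldsymbol{\theta})$ as in \eqref{termf2}, and $\|\boldsymbol{q}_i^{(t)} - \tilde{\boldsymbol{q}}\| \le 2$ since both are probability vectors (this is where the $M^2$ and $M$ factors must enter — presumably via $\|\boldsymbol{q}_i^{(t)} - \tilde{\boldsymbol{q}}\|_1 \le M\|\cdot\|_\infty$ or a crude $\|\boldsymbol{q}_i^{(t)} - \tilde{\boldsymbol{q}}\|^2 \le$ const bound that the authors write as $M$-dependent). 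Summing over $i$ and using $\nabla_x F_i(x^*(\tilde{\boldsymbol{q}}), \tilde{\boldsymbol{q}})$'s contribution folded into $\sum_i \|\nabla_x \boldsymbol{J}_i(x^*(\tilde{\boldsymbol{q}}), \boldsymbol{\theta})\|^2$, together with $\|\boldsymbol{x}^{(t)} - \boldsymbol{1}x^*(\tilde{\boldsymbol{q}})\|^2 = \sum_i \|x_i^{(t)} - x^*(\tilde{\boldsymbol{q}})\|^2$, yields the bracketed expression.

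The main obstacle I anticipate is bookkeeping the constants so that the contraction coefficient comes out exactly $\frac{3+\rho_w^2}{4}$ and the second term has exactly the prefactor $\frac{3\rho_w^2[\alpha^{(t)}]^2}{1-\rho_w^2}$ with the precise $M$-dependence — this forces a specific choice of the Young's-inequality parameter and a specific (somewhat loose) bound on $\|\boldsymbol{q}_i^{(t)} - \tilde{\boldsymbol{q}}\|$, and getting all three applications of $(a+b)^2 \le 2a^2+2b^2$ to align with the factor-of-$3$ and factor-of-$2M$ in the statement is the delicate part; the rest is routine.
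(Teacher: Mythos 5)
Your overall skeleton coincides with the paper's: write the update in stacked form \eqref{newx_iter}, project with $I-\tfrac{\boldsymbol{1}\boldsymbol{1}^{\top}}{N}$, use double stochasticity (Assumption \ref{assum_graph}) and the spectral bound to get $\|\boldsymbol{x}^{(t+1)}-\boldsymbol{1}\bar{x}^{(t+1)}\|\le\rho_w\|\boldsymbol{x}^{(t)}-\boldsymbol{1}\bar{x}^{(t)}\|+\alpha^{(t)}\rho_w\|\nabla_x\mathbb{F}(\boldsymbol{x}^{(t)},\boldsymbol{Q}^{(t)})\|$, then square via Young's inequality and bound the stacked gradient by an add--subtract at $x^*(\tilde{\boldsymbol{q}})$. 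One of your hedges is misplaced, though: the factor $\tfrac{3}{1-\rho_w^2}$ does not come from splitting the gradient into three pieces; it comes entirely from the Young parameter. The paper takes $c_1=\tfrac{1-\rho_w^2}{2}$ in \eqref{con_error}, so $1+\tfrac{1}{c_1}=\tfrac{3-\rho_w^2}{1-\rho_w^2}\le\tfrac{3}{1-\rho_w^2}$ and $\rho_w^2(1+c_1)=\rho_w^2\tfrac{3-\rho_w^2}{2}\le\tfrac{3+\rho_w^2}{4}$; your own first guess $\eta=\tfrac{1-\rho_w^2}{3\rho_w^2}$ also works, since $\tfrac{2\rho_w^2+1}{3}\le\tfrac{3+\rho_w^2}{4}$ and $\rho_w^2(1+\tfrac{1}{\eta})\le\tfrac{3\rho_w^2}{1-\rho_w^2}$, so you should simply commit to it and drop the ``three pieces'' speculation.

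The substantive gap is in your bound on $\|\nabla_x\mathbb{F}\|^2$. The paper never inserts $\tilde{\boldsymbol{q}}$ and never needs $\|\boldsymbol{q}_i^{(t)}-\tilde{\boldsymbol{q}}\|$ here: it applies Cauchy--Schwarz with $\|\boldsymbol{q}_i^{(t)}\|\le\sqrt{M}$ (see \eqref{bd-pro}--\eqref{MJ}) to get $\|\nabla_xF_i(x_i^{(t)},\boldsymbol{q}_i^{(t)})\|\le\sqrt{M}\,\|\nabla_x\boldsymbol{J}_i(x_i^{(t)},\boldsymbol{\theta})\|$, then splits the stacked gradient around $x^*(\tilde{\boldsymbol{q}})$, where the Lipschitz smoothness of each of the $M$ components (Assumption \ref{assum_func}) gives $\|\nabla_x\boldsymbol{J}_i(x_i^{(t)},\boldsymbol{\theta})\|\le\|\nabla_x\boldsymbol{J}_i(x^*(\tilde{\boldsymbol{q}}),\boldsymbol{\theta})\|+\sqrt{M}L\|x_i^{(t)}-x^*(\tilde{\boldsymbol{q}})\|$ (the second $\sqrt{M}$ comes from stacking, see \eqref{bd-boldJi}); squaring with $(a+b)^2\le2(a^2+b^2)$ produces exactly $2M^2L^2\|x_i^{(t)}-x^*(\tilde{\boldsymbol{q}})\|^2+2M\|\nabla_x\boldsymbol{J}_i(x^*(\tilde{\boldsymbol{q}}),\boldsymbol{\theta})\|^2$, i.e.\ the bracket in \eqref{consen_error}. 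Your three-term insertion of $\pm\nabla_xF_i(x^*(\tilde{\boldsymbol{q}}),\boldsymbol{q}_i^{(t)})$ and $\pm\nabla_xF_i(x^*(\tilde{\boldsymbol{q}}),\tilde{\boldsymbol{q}})$, combined with $(a+b+c)^2\le3(a^2+b^2+c^2)$, the $L$-smoothness of $F_i$ from Lemma \ref{lemma_allproperty}, and $\|\boldsymbol{q}_i^{(t)}-\tilde{\boldsymbol{q}}\|\le2$, yields $M$-independent constants of the form $3L^2\|x_i^{(t)}-x^*(\tilde{\boldsymbol{q}})\|^2+15\|\nabla_x\boldsymbol{J}_i(x^*(\tilde{\boldsymbol{q}}),\boldsymbol{\theta})\|^2$, and $15>2M$ whenever $M<8$, so as written your argument does not establish the inequality with the lemma's stated constants; nor does any $\ell_1$/$\ell_\infty$ bound on $\boldsymbol{q}_i^{(t)}-\tilde{\boldsymbol{q}}$ supply the missing $M$-dependence, which in the paper enters only through $\|\boldsymbol{q}_i^{(t)}\|\le\sqrt{M}$ and the stacking of $M$ Lipschitz components. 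The fix is simply to drop $\tilde{\boldsymbol{q}}$ from the decomposition and follow the two-term bound above (or note that your tighter $M$-free bound proves a different, not the stated, inequality).
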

			\begin{proof}
				By  recalling the definitions of $\bar{x}^{(t)}$ and $\bar{ F}(\boldsymbol{x}^{(t)},\boldsymbol{Q}^{(t)}) $ in (\ref{barx}) and (\ref{full_func}),  together with the double stochasticity of $W$ in Assumption \ref{assum_graph}, we have
				\begin{align}
x_i^{(t+1)}-\bar{x}^{(t+1)}& \overset{\eqref{newx_iter}}{=}   \sum_{j=1}^{N}w_{ij}(x_j^{(t)}-\alpha^{(t)} \nabla_xF_i(x_i^{(t)},\boldsymbol{q}_i^{(t)}))\notag\\
					& -\left(\bar{x}^{(t)}-\alpha^{(t)}\nabla_x\bar{ F}(\boldsymbol{x}^{(t)},\boldsymbol{Q}^{(t)})\right) .\label{46}
				\end{align}
				As a result, consider the vector form. By recalling the definitions of  $\mathbb{F}\left(\boldsymbol{x}^{(t)}, \boldsymbol{Q}^{(t)}\right)$ in (\ref{mathbbF}), we have
				\begin{align*}
					&\|\boldsymbol{x}^{(t+1)}-\boldsymbol{1} \bar{x}^{(t+1)}\|\leq \Big\|W\left(\boldsymbol{x}^{(t)}-\alpha^{(t)} \nabla_x \mathbb{F}\left(\boldsymbol{x}^{(t)}, \boldsymbol{Q}^{(t)}\right)\right)\notag\\
					&-\boldsymbol{1}\left(\bar{x}^{(t)}-\alpha^{(t)}\nabla_x\bar{ F}\left(\boldsymbol{x}^{(t)}, \boldsymbol{Q}^{(t)}\right)\right)\Big\| \notag \\
					& =\Bigg\|\left(W-\frac{\boldsymbol{1}\boldsymbol{1}^{\top}}{N}\right)\Bigg[ \left(\boldsymbol{x}^{(t)}-\boldsymbol{1} \bar{x}^{(t)}\right)\notag\\
					&\quad-\alpha^{(t)} \left(\mathbb{F}\left(\boldsymbol{x}^{(t)}, \boldsymbol{Q}^{(t)}\right)-\boldsymbol{1}\nabla_x \bar{F}\left(\boldsymbol{x}^{(t)}, \boldsymbol{Q}^{(t)}\right) \right)\Bigg]\Bigg\|\notag\\
					&=\Bigg\|\left(W-\frac{\boldsymbol{1}\boldsymbol{1}^{\top}}{N}\right) \left(\boldsymbol{x}^{(t)}-\boldsymbol{1} \bar{x}^{(t)}\right)\notag\\
					&\quad-\alpha^{(t)} \left(W-\frac{\boldsymbol{1}\boldsymbol{1}^{\top}}{N}\right)\left(I-\frac{\boldsymbol{1}\boldsymbol{1}^{\top}}{N}\right)\mathbb{F}\left(\boldsymbol{x}^{(t)}, \boldsymbol{Q}^{(t)}\right)\Bigg\|,
				\end{align*}
			where the second equality holds since $\frac{\boldsymbol{1}\boldsymbol{1}^T}{N}(\boldsymbol{x}^{(t)}-\boldsymbol{1}\bar{x}^{(t)})=\boldsymbol{1}\bar{x}^{(t)}-\boldsymbol{1}\bar{x}^{(t)}=0$, whereas the last equality follows by $\nabla_x \bar{ F}\left(\boldsymbol{x}^{(t)}, \boldsymbol{Q}^{(t)}\right)=\tfrac{\boldsymbol{1}^{\top}}{N}\mathbb{F}\left(\boldsymbol{x}^{(t)}, \boldsymbol{Q}^{(t)}\right)$.
			
			Noticing that $\|I-\frac{\boldsymbol{1}\boldsymbol{1}^{\top}}{N}\|\leq 1$ and $\rho_w$ is the spectral norm of  $\|W-\frac{\boldsymbol{1}\boldsymbol{1}^{\top}}{N}\|$, based on above relation we derive
				\begin{align}
					\|\boldsymbol{x}^{(t+1)}-&\boldsymbol{1} \bar{x}^{(t+1)}\| \notag\\
					& \leq\rho_w \left\|\boldsymbol{x}^{(t)}-\boldsymbol{1} \bar{x}^{(t)}\right\| + \alpha^{(t)}\rho_w \left \|  \nabla_x \mathbb{F}\left(\boldsymbol{x}^{(t)}, \boldsymbol{Q}^{(t)}\right) \right\|.  \notag
				\end{align}
				Hence by using $(a+b)^2\leq a^2+b^2+2ab \leq a^2+b^2+a^2/c +b^2c  $ for any $c >0,$ we obtain that for any $c_1>0$,
				\begin{align}
					&\left\|\boldsymbol{x}^{(t+1)}-\boldsymbol{1} \bar{x}^{(t+1)}\right\|^2  \notag\\
					&\leq \rho_w^2 (1+c_1)\left\|\boldsymbol{x}^{(t)}-\boldsymbol{1} \bar{x}^{(t)}\right\|^2\notag\\
					&\quad+\left[\alpha^{(t)}\right]^2\rho_w^2 (1+\tfrac{1}{c_1})\left\| \nabla_x \mathbb{F}\left(\boldsymbol{x}^{(t)}, \boldsymbol{Q}^{(t)}\right)\right\|^2 . \label{con_error}
				\end{align}
					Note that  for any probability vector $\boldsymbol{q}\in \mathbb{R}^M$, since every element of $\boldsymbol{q}$ is nonnegative and  less than $1$,  we have
				\begin{align}\label{bd-pro}
					\|\boldsymbol{q}\| \leq \sqrt{M}.
				\end{align}
				
				By using (\ref{def_F}) and (\ref{mathbbF}), we can obtain that
				\begin{align}\label{MJ}
					& \left\|\nabla_x \mathbb{F}\left(\boldsymbol{x}^{(t)}, \boldsymbol{Q}^{(t)}\right)\right\|^2 =\sum_{i=1}^N \| \boldsymbol{q}_i^{(t)}(\boldsymbol{\theta})^T\nabla_x\boldsymbol{J}_i(x_i^{(t)},\boldsymbol{\theta})\|^2 \notag
					\\& \leq \sum_{i=1}^N \| \boldsymbol{q}_i^{(t)}(\boldsymbol{\theta})\|^2 \| \nabla_x\boldsymbol{J}_i(x_i^{(t)},\boldsymbol{\theta})\|^2\notag\\
					&\overset{\eqref{bd-pro}}{\leq } M \sum_{i=1}^N   \|\nabla_x \boldsymbol{J}_i(x_i^{(t)},\boldsymbol{\theta})\|^2   .
				\end{align}
				In addition, recalling  the Lipschitz smooth property  in Assumption \ref{assum_func}, and the definition of $\boldsymbol{J}_i(x,\boldsymbol{\theta})$ in  \eqref{def-boldJi}, we  obtain
				\begin{align}\label{bd-boldJi}
					&\|\nabla_x \boldsymbol{J}_i(x_i^{(t)},\boldsymbol{\theta})\|\notag
					\\& = \|\nabla_x \boldsymbol{J}_i   \left(x_i^{(t)}, \boldsymbol{\theta}\right)-\nabla_x \boldsymbol{J}_i(x^*(\tilde{\boldsymbol{q}}), \boldsymbol{\theta})+\nabla_x \boldsymbol{J}_i(x^*(\tilde{\boldsymbol{q}}), \boldsymbol{\theta})\| \notag
					\\& \leq \|\nabla_x \boldsymbol{J}_i(x^*(\tilde{\boldsymbol{q}}), \boldsymbol{\theta})\| \notag \\& +\sqrt{\sum_{m=1}^M \|   \nabla_x J_i   \left(x_i^{(t)}, \theta_m \right)-\nabla_x J_i(x^*(\tilde{\boldsymbol{q}}),  \theta_m) \|^2} \notag
					\\&=\|\nabla_x \boldsymbol{J}_i(x^*(\tilde{\boldsymbol{q}}), \boldsymbol{\theta})\|+\sqrt{M}L\|x_i^{(t)}-x^*(\tilde{\boldsymbol{q}})\|
				\end{align}

					Whereas by using $(a+b)^2\leq 2(a^2+b^2),$ we have
					\begin{align*}
						&\|\nabla_x \boldsymbol{J}_i(x_i^{(t)},\boldsymbol{\theta})\|^2
						\leq \\
						&2 \|\nabla_x \boldsymbol{J}_i(x^*(\tilde{\boldsymbol{q}}), \boldsymbol{\theta})\|^2+2ML^2\|x_i^{(t)}-x^*(\tilde{\boldsymbol{q}})\|^2 .
					\end{align*}
					This together with \eqref{MJ}  produces
				\begin{align}
					& \left\|\nabla_x \mathbb{F}\left(\boldsymbol{x}^{(t)}, \boldsymbol{Q}^{(t)}\right)\right\|^2 \label{vec_F_bound}
					\\&   \leq M  \sum_{i=1}^N    ( 2\|\nabla_x \boldsymbol{J}_i(x^*(\tilde{\boldsymbol{q}}), \boldsymbol{\theta})\|^2+2ML^2\|x_i^{(t)}-x^*(\tilde{\boldsymbol{q}})\|^2).\notag
				\end{align}

				By combining (\ref{con_error}) with (\ref{vec_F_bound}),  and  letting $c_1=\frac{1-\rho_w^2}{2}$, we have
				\begin{align*}
					& \frac{1}{\rho_w^2}\left\|\boldsymbol{x}^{(t+1)}-\boldsymbol{1}\bar{x}^{(t+1)}\right\|^2 \leqslant \frac{3-\rho_w^2}{2}\left\|\boldsymbol{x}^{(t)}-1 \bar{x}^{(t)}\right\|^2+\tfrac{3\left[\alpha^{(t)}\right]^2 }{1-\rho_w^2}\notag\\
					&\times
				\Big[2 M^2 L^2	\|\boldsymbol{x}^{(t)}-\boldsymbol{1}x^*(\tilde{\boldsymbol{q}})\|^2 +2 M \sum_{i=1}^N\|\nabla_x \boldsymbol{J}_i(x^*(\tilde{\boldsymbol{q}}), \boldsymbol{\theta})\|^2   \Big].
				\end{align*}
				Note that $\rho_w^2\left(\frac{3-\rho_w^2}{2}\right) \leq \frac{3+\rho_w^2}{4} \text { by } \rho_w \in(0,1)$. Then multiplying $\rho_w$ on both side of above relation leads to  (\ref{consen_error}).
			\end{proof}
			
			 From now on, we consider the stepsize $\alpha^{(t)}$ of order $\mathcal{O}(\frac{1}{t})$, which also satisfy the Assumption \ref{assum_step}.
			In the following, we present a uniform bound on the iterates $\{\boldsymbol{x}^{(t)}\}_{t\geq0}$ generated by Algorithm \ref{alg:CDSA}.	The   proof  is presented in Appendix \ref{appe_hatX}.
			
			\begin{lemma}\label{lemma_hatX}
				Let Assumptions \ref{assum_graph} and \ref{assum_func} hold. Considering Algorithm \ref{alg:CDSA} with stepsize $\alpha^{(t)}$ of order $\mathcal{O}(\frac{1}{t})$, for all $t\geq 0$ we have the gap between the iteration vector $\boldsymbol{x}^{(t)}$ which defined in \eqref{mathbbx} and the optimal solution under belief  $\tilde{\boldsymbol{q}}$ which defined in \eqref{converq} is bounded by some constant $\hat{X}$, i.e.
				\begin{align}
					\|\boldsymbol{x}^{(t)}-\boldsymbol{1}x^*(\tilde{\boldsymbol{q}})\|^2\leq \hat{X}.
				\end{align}
			\end{lemma}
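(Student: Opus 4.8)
The plan is to induct on $V_t:=\|\boldsymbol{x}^{(t)}-\boldsymbol{1}x^*(\tilde{\boldsymbol{q}})\|^{2}$, using the two recursions already in hand. First I would split, by the Pythagorean identity (valid since $\boldsymbol{x}^{(t)}-\boldsymbol{1}\bar{x}^{(t)}\perp\boldsymbol{1}$),
\[
\|\boldsymbol{x}^{(t)}-\boldsymbol{1}x^*(\tilde{\boldsymbol{q}})\|^{2}=N\|\bar{x}^{(t)}-x^*(\tilde{\boldsymbol{q}})\|^{2}+\|\boldsymbol{x}^{(t)}-\boldsymbol{1}\bar{x}^{(t)}\|^{2},
\]
so it suffices to control the optimization error $a_t:=\|\bar{x}^{(t)}-x^*(\tilde{\boldsymbol{q}})\|^{2}$ and the consensus error $b_t:=\|\boldsymbol{x}^{(t)}-\boldsymbol{1}\bar{x}^{(t)}\|^{2}$ jointly. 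Next I would record that all belief-dependent quantities on the right-hand sides of Lemmas~\ref{lemma_op} and~\ref{lemma_consens} are uniformly bounded in $t$: $\|\boldsymbol{q}_i^{(t)}-\tilde{\boldsymbol{q}}\|\le\sqrt{2}$ since both are probability vectors (cf.\ \eqref{bd-pro}), and $\|\nabla_x\boldsymbol{J}_i(x^*(\tilde{\boldsymbol{q}}),\boldsymbol{\theta})\|$ is a fixed finite constant. Hence Lemma~\ref{lemma_consens} reads $b_{t+1}\le\frac{3+\rho_w^2}{4}b_t+[\alpha^{(t)}]^2\bigl(c_1 V_t+c_2\bigr)$, and Lemma~\ref{lemma_op}, after squaring and using $\sqrt{1-\alpha^{(t)}\mu(1-2L\alpha^{(t)})}\le 1-\tfrac14\alpha^{(t)}\mu$ for $\alpha^{(t)}$ small, gives $\sqrt{a_{t+1}}\le(1-\tfrac14\alpha^{(t)}\mu)\sqrt{a_t}+c_3[\alpha^{(t)}]^{1/2}\sqrt{b_t}+c_4\alpha^{(t)}\sqrt{b_t}+c_5\alpha^{(t)}$, with $t$-independent constants $c_1,\dots,c_5$.

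Since $\alpha^{(t)}=\mathcal{O}(1/t)\to0$, I would fix $T_0$ large enough that for $t\ge T_0$ the stepsize is below $\tfrac1{2L}$ and small enough for all estimates below. The iterates $\boldsymbol{x}^{(0)},\dots,\boldsymbol{x}^{(T_0)}$ arise from finitely many applications of the finite-valued update \eqref{newx_iter}, so $X_0:=\max_{0\le s\le T_0}V_s<\infty$. For $t\ge T_0$ I would prove by induction that $V_s\le\hat{X}$ for all $s\le t$, for a constant $\hat{X}\ge X_0$ to be fixed. Granting the hypothesis, the consensus recursion becomes $b_{s+1}\le\frac{3+\rho_w^2}{4}b_s+[\alpha^{(s)}]^2(c_1\hat{X}+c_2)$; unrolling this geometric recursion — and using that $\alpha^{(t)}=\mathcal{O}(1/t)$ is slowly varying, so $\sum_{s}(\tfrac{3+\rho_w^2}{4})^{t-1-s}[\alpha^{(s)}]^2\le C[\alpha^{(t)}]^2$ for $t$ large, while the transient $(\tfrac{3+\rho_w^2}{4})^{t-T_0}b_{T_0}$ decays faster than $[\alpha^{(t)}]^2$ — yields $b_t\le(\kappa\hat{X}+\kappa')[\alpha^{(t)}]^2$ with $\kappa$ proportional to $[\alpha^{(T_0)}]^2$, hence as small as desired once $T_0$ is large. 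Substituting $\sqrt{b_t}\le\sqrt{\kappa\hat{X}+\kappa'}\,\alpha^{(t)}$ into the $\sqrt{a_t}$ recursion turns the delicate $[\alpha^{(t)}]^{1/2}\sqrt{b_t}$ term into an $\mathcal{O}([\alpha^{(t)}]^{3/2})=\mathcal{O}(\alpha^{(t)})$ term, so $\sqrt{a_{t+1}}\le(1-\tfrac14\alpha^{(t)}\mu)\sqrt{a_t}+\alpha^{(t)}D(\hat{X})$ with $D(\hat{X})$ of order $1+\sqrt{\kappa\hat{X}}$; a second, scalar induction then gives $\sqrt{a_t}\le\tfrac{4}{\mu}D(\hat{X})$, hence $Na_t\le\tfrac{16N}{\mu^2}D(\hat{X})^2$. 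Adding the two bounds gives $V_{t+1}=Na_{t+1}+b_{t+1}\le\eta\hat{X}+C_0$, where $\eta$ is a small multiple of $[\alpha^{(T_0)}]^2$ (so $\eta<\tfrac12$ for $T_0$ large) and $C_0$ is a finite constant; choosing $\hat{X}$ large enough to dominate $X_0$, the base case of the scalar induction, and $2C_0$ then closes the induction and establishes the lemma with this $\hat{X}$.

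The main obstacle is exactly the half-power stepsize weight $\tfrac{[\alpha^{(t)}]^{0.5}L}{\sqrt{\mu N}}\|\boldsymbol{x}^{(t)}-\boldsymbol{1}\bar{x}^{(t)}\|$ multiplying the consensus error in Lemma~\ref{lemma_op}: applying Young's inequality to it directly produces a contribution to $a_{t+1}$ whose coefficient on $b_t$ carries no factor of $\alpha^{(t)}$, so that the optimization error, analysed in isolation from $b_t$, would have an ``equilibrium'' of size $\mathcal{O}(1/\alpha^{(t)})\to\infty$. The two-stage structure above is what circumvents this: under the induction hypothesis the consensus error is not merely bounded but is $\mathcal{O}([\alpha^{(t)}]^2)$, because it is driven only by $\mathcal{O}([\alpha^{(t)}]^2)$ forcing through the strict geometric contraction $\tfrac{3+\rho_w^2}{4}<1$; hence the effective forcing of $a_t$ by $b_t$ is in fact $\mathcal{O}([\alpha^{(t)}]^{3/2})$, which is controllable against the $(1-\tfrac14\alpha^{(t)}\mu)$ contraction. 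Making the various ``small for $T_0$ large'' statements quantitative, and verifying that a single finite $\hat{X}$ can simultaneously dominate $X_0$, the scalar-induction base case, and the residual additive constant $2C_0$, is the only genuinely delicate piece of bookkeeping.
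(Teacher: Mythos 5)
Your proposal is correct in outline but takes a genuinely different route from the paper. The paper's proof never touches the coupled recursions of Lemmas~\ref{lemma_op} and \ref{lemma_consens}: it works agent-by-agent on the raw update \eqref{newx_iter}, showing that once $\alpha^{(t)}\le \mu/(2M^2L^2)$ a single gradient step cannot increase $\|x_i^{(t)}-x^*(\tilde{\boldsymbol{q}})\|^2$ unless that distance already lies in a compact interval $[0,p_i^{(r)}]$ determined by $\mu$, $L$, $M$ and $\|\nabla_x\boldsymbol{J}_i(x^*(\tilde{\boldsymbol{q}}),\boldsymbol{\theta})\|$, in which case the post-step distance is bounded by the maximum $R_i$ of a fixed quadratic over that interval; since $\|W\|_2=1$ under Assumption~\ref{assum_graph}, this yields $\|\boldsymbol{x}^{(t+1)}-\boldsymbol{1}x^*(\tilde{\boldsymbol{q}})\|^2\le\max\{\|\boldsymbol{x}^{(t)}-\boldsymbol{1}x^*(\tilde{\boldsymbol{q}})\|^2,\sum_i R_i\}$ for all $t$ beyond a finite $T$, and the finitely many earlier iterates are bounded exactly as in your preliminary step. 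This ``monotone outside a compact set'' argument needs no induction, no consensus/optimization splitting, and no rate information. Your bootstrap, in exchange for being considerably heavier, produces a quantitative by-product (the consensus error is already shown to be $\mathcal{O}([\alpha^{(t)}]^2)$ inside the induction, which the paper only obtains afterwards via Lemma~\ref{lemma_rec_e}), and it correctly isolates the genuine difficulty, namely the half-power weight $[\alpha^{(t)}]^{0.5}$ on the consensus error in Lemma~\ref{lemma_op}.

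One piece of your bookkeeping is misstated, though it is repairable with ingredients you already have. In the bound $b_t\le(\kappa\hat{X}+\kappa')[\alpha^{(t)}]^2$ obtained by unrolling the geometric consensus recursion, the coefficient $\kappa$ is a fixed constant (essentially $c_1$ times the constant produced in the proof of Lemma~\ref{lemma_rec_e}); it is \emph{not} proportional to $[\alpha^{(T_0)}]^2$ and does not shrink as $T_0$ grows. The smallness that makes $\eta<\tfrac12$ must instead be harvested from the extra stepsize powers you have already created: $[\alpha^{(t)}]^{1/2}\sqrt{b_t}\le\sqrt{\kappa\hat{X}+\kappa'}\,[\alpha^{(t)}]^{3/2}\le[\alpha^{(T_0)}]^{1/2}\sqrt{\kappa\hat{X}+\kappa'}\,\alpha^{(t)}$, and likewise $b_{t+1}\le(\kappa\hat{X}+\kappa')[\alpha^{(T_0)}]^2$, so the coefficient multiplying $\hat{X}$ in your bound for $V_{t+1}$ is of order $\alpha^{(T_0)}$ (not $[\alpha^{(T_0)}]^2$, and not small through $\kappa$), which is still enough to close the induction by taking $T_0$ large and then $\hat{X}$ large. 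With the constant-tracking corrected in this way your argument is sound; the paper's compact-set argument simply avoids this bookkeeping altogether.
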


			Next, we derive the convergence rate of consensus error based on the recursive  form of Lemma \ref{lemma_consens},
 while present it in a more general way. For completeness, its proof is given in Appendix \ref{appe_rec_e}.
			\begin{lemma}\label{lemma_rec_e}
				Let $\{e^{(t)}\}_{t\geq 0}$ and $\{\alpha^{(t)}\}_{t\geq0}$ be nonnegative sequences, where  $\alpha^{(t)}$ of order $\mathcal{O}(\frac{1}{t})$. If the   recursion
				\begin{equation}\label{rec-et}
					e^{(t+1)}\leq \delta e^{(t)}+ c[\alpha^{(t)}]^2
				\end{equation}
				holds  for $\delta\in(0,1)$ and $c>0$. Then the sequence $\{e^{(t)}\}_{t\geq0}$ diminishes to $0$ with rate $\mathcal{O}(\frac{1}{t^2})$.
			\end{lemma}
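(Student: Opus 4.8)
The plan is to prove the recursion \eqref{rec-et} forces $e^{(t)}=\mathcal{O}(1/t^2)$ by a direct induction argument, finding a constant $C$ such that $e^{(t)}\le C/t^2$ for all $t$ large enough (and then absorbing the finitely many small-$t$ terms into $C$). Since $\alpha^{(t)}$ is of order $\mathcal{O}(1/t)$, write $\alpha^{(t)}\le a/t$ for some $a>0$ and $t\ge 1$, so that $c[\alpha^{(t)}]^2\le ca^2/t^2=:K/t^2$.

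First I would fix the induction hypothesis $e^{(t)}\le C/t^2$ and try to propagate it: from \eqref{rec-et},
\begin{align*}
e^{(t+1)}\le \delta\,\frac{C}{t^2}+\frac{K}{t^2}=\frac{\delta C+K}{t^2}.
\end{align*}
To close the induction we need $\dfrac{\delta C+K}{t^2}\le \dfrac{C}{(t+1)^2}$, i.e. $(t+1)^2(\delta C+K)\le C t^2$. Expanding, this is $(\delta C+K)(1+2/t+1/t^2)\le C$, which for large $t$ is essentially $\delta C+K\le C$, i.e. $C\ge K/(1-\delta)$. The lower-order terms $2/t+1/t^2$ spoil this for small $t$, so the clean way is to prove the bound for $t\ge t_0$ where $t_0$ is chosen so that $(1+2/t+1/t^2)\le 1/\delta'$ for some $\delta'\in(\delta,1)$; then picking $C\ge K/(1-\delta'/\delta\cdot\delta)$ — more simply, choose $\delta'\in(\delta,1)$, let $t_0$ satisfy $(1+1/t_0)^2\le \delta'/\delta$, and set $C=\max\{K/(\delta'-\delta)\cdot\delta^{-1}\text{-type constant},\ t_0^2 e^{(t_0)}\}$ so that the base case $e^{(t_0)}\le C/t_0^2$ holds trivially and the inductive step goes through for all $t\ge t_0$.

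Then I would handle the initial segment: the recursion \eqref{rec-et} with $\delta\in(0,1)$ and bounded forcing term immediately gives that $e^{(t)}$ stays bounded for all $t$ (indeed $e^{(t)}\le \delta^t e^{(0)}+K\sum_{s\ge 1}\delta^{s-1}/s^2\le \delta^t e^{(0)}+K/(1-\delta)$), so $e^{(t_0)}$ is finite, and enlarging $C$ to also dominate $t^2 e^{(t)}$ for the finitely many $t<t_0$ yields $e^{(t)}\le C/t^2$ for all $t\ge 1$, which is the claimed $\mathcal{O}(1/t^2)$ rate.

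The main obstacle is purely bookkeeping: getting the constants $\delta'$, $t_0$, and $C$ in the right order so the induction is genuinely self-consistent — one must choose $\delta'$ strictly between $\delta$ and $1$ first, then $t_0$ large enough that $(1+1/t)^2\le\delta'/\delta$ for all $t\ge t_0$ (possible since the left side $\to 1<\delta'/\delta$), then $C$ large enough to both satisfy $\delta' C+K\le C$ (i.e. $C\ge K/(1-\delta')$) after absorbing the $t\ge t_0$ tail factors and to cover the base case and the initial segment. There is no analytical difficulty beyond this; the boundedness of $\{e^{(t)}\}$ needed for the initial segment follows from $\delta<1$ and the summability of $\sum 1/t^2$.
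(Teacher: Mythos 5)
Your proposal is correct, but it takes a genuinely different route from the paper. You prove $e^{(t)}\le C/t^2$ directly by induction: fix $\delta'\in(\delta,1)$, take $t_0$ with $(1+1/t)^2\le \delta'/\delta$ for $t\ge t_0$, choose $C$ large enough that $\delta' C + K\delta'/\delta\le C$ and that the base case holds, and absorb the finitely many initial terms by enlarging $C$; the needed boundedness of the initial segment follows from $\delta<1$ and the bounded forcing term. The paper instead unrolls the recursion to $e^{(t+1)}\le \delta^{t+1}e^{(0)}+c\sum_{\tau=0}^{t}\delta^{t-\tau}[\alpha^{(\tau)}]^2$, writes $\alpha^{(t)}=\gamma/(t+T)$, divides through by $[\alpha^{(t)}]^2$, and bounds $\limsup_{t\to\infty} e^{(t+1)}/[\alpha^{(t)}]^2$ by explicitly summing the series $\sum_\tau \delta^\tau$, $\sum_\tau \tau\delta^\tau$, and $\sum_\tau \tau^2\delta^\tau$ in closed form. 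Your induction is more elementary (no series manipulations or differentiation of geometric sums), works directly from the one-sided bound $\alpha^{(t)}\le a/t$ rather than a specific parametric form of the stepsize, and yields a uniform bound valid for all $t$; the paper's computation, in exchange, produces an explicit asymptotic constant for $e^{(t+1)}/[\alpha^{(t)}]^2$. Your intermediate expression for the threshold constant (``$C\ge K/(\delta'-\delta)\cdot\delta^{-1}$-type constant'') is garbled, but your closing paragraph states the correct requirement --- $C$ at least of order $K\delta'/(\delta(1-\delta'))$ after accounting for the tail factor --- so the argument closes as written.
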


			In addition, we  introduce the following lemma from  \cite[lemma 1]{ahmadi2020resolution} for converge analysis. 
			\begin{lemma}\label{lemma_con}
				Let the sequence recursion
				\begin{equation}
					u^{(t+1)}\leq p^{(t)}u^{(t)}+\beta^{(t)}
				\end{equation}
				hold for $0 \leq p^{(t)}< 1, \beta^{(t)}\geq 0, \sum_{t=1}^{\infty}(1-p^{(t)})=\infty$ and $\lim_{t\rightarrow \infty}\frac{\beta^{(t)}}{(1-p^{(t)})}=0$.  If $u^{(t)}\geq 0$,   we have $\lim_{t\rightarrow \infty}u^{(t)}=0$.
			\end{lemma}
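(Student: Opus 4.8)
The plan is to remove the forcing term $\beta^{(t)}$ from the recursion by absorbing it into an $\epsilon$-shift, thereby reducing the inequality to a purely multiplicative one, and then to exploit the divergence of $\sum_t(1-p^{(t)})$ to drive the shifted sequence to zero. Fix an arbitrary $\epsilon>0$. Since $\beta^{(t)}/(1-p^{(t)})\to 0$, there is an index $T$ such that $\beta^{(t)}\le \epsilon\,(1-p^{(t)})$ for all $t\ge T$. Plugging this bound into the hypothesis $u^{(t+1)}\le p^{(t)}u^{(t)}+\beta^{(t)}$ and subtracting $\epsilon$ from both sides gives, for every $t\ge T$,
\[
u^{(t+1)}-\epsilon \;\le\; p^{(t)}u^{(t)}+\epsilon(1-p^{(t)})-\epsilon \;=\; p^{(t)}\bigl(u^{(t)}-\epsilon\bigr).
\]
Hence, writing $v^{(t)}:=u^{(t)}-\epsilon$, we obtain the clean contraction $v^{(t+1)}\le p^{(t)}v^{(t)}$ valid for all $t\ge T$.

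Next I would argue by the sign of $v^{(t)}$. Because $p^{(t)}\ge 0$, as soon as $v^{(t)}\le 0$ for some index $\ge T$ it remains $\le 0$ for all later indices, which already gives $u^{(t)}\le\epsilon$ from that point on. In the complementary situation, $v^{(t)}>0$ for all $t\ge T$, and iterating the contraction yields $v^{(t)}\le v^{(T)}\prod_{s=T}^{t-1}p^{(s)}$. Writing each factor as $p^{(s)}=1-(1-p^{(s)})$ and using the elementary inequality $1-x\le e^{-x}$ (legitimate since $1-p^{(s)}\in(0,1]$), I get
\[
\prod_{s=T}^{t-1}p^{(s)} \;\le\; \exp\Bigl(-\sum_{s=T}^{t-1}(1-p^{(s)})\Bigr),
\]
and the right-hand side tends to $0$ as $t\to\infty$ because $\sum_t(1-p^{(t)})=\infty$ (dropping the finitely many terms with index below $T$ does not affect the divergence). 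Thus in either case $\limsup_{t\to\infty}u^{(t)}\le\epsilon$.

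Since $\epsilon>0$ was arbitrary, it follows that $\limsup_{t\to\infty}u^{(t)}\le 0$; combining this with the assumed nonnegativity $u^{(t)}\ge 0$ forces $\lim_{t\to\infty}u^{(t)}=0$, which is the claim. The only point requiring a bit of care is the passage to the product estimate: one must pick $T$ large enough that the $\epsilon$-domination of $\beta^{(t)}$ holds for \emph{all} subsequent $t$, and then note that the tail series $\sum_{s\ge T}(1-p^{(s)})$ is still divergent. That is the main (and quite mild) obstacle; the remaining steps are entirely elementary.
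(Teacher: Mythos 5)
Your argument is correct. Note that the paper does not prove this statement at all: it imports it verbatim as Lemma~1 of the cited reference (Ahmadi et al.), so there is no in-paper proof to compare against. Your self-contained derivation is the standard one for such deterministic recursions: the $\epsilon$-shift $v^{(t)}=u^{(t)}-\epsilon$ turns the inequality into the pure contraction $v^{(t+1)}\le p^{(t)}v^{(t)}$ once $\beta^{(t)}\le\epsilon\,(1-p^{(t)})$, and the product bound $\prod_{s=T}^{t-1}p^{(s)}\le\exp\bigl(-\sum_{s=T}^{t-1}(1-p^{(s)})\bigr)\to 0$ follows from the divergence of the tail series. All steps check out; in fact your case split is not even needed, since the iterated bound $v^{(t)}\le v^{(T)}\prod_{s=T}^{t-1}p^{(s)}$ (valid because each $p^{(s)}\ge 0$) already gives $\limsup_{t\to\infty}v^{(t)}\le 0$ whether $v^{(T)}$ is positive or not, and then $\limsup_{t\to\infty}u^{(t)}\le\epsilon$ for arbitrary $\epsilon>0$ combined with $u^{(t)}\ge 0$ yields the claim.
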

			\begin{theorem}\label{thm2}
				Let Assumptions \ref{assum_boundbelief}, \ref{assum_graph}, \ref{assum_step}, and \ref{assum_func} hold. Consider Algorithm \ref{alg:CDSA} with the stepsize $\alpha^{(t)}$ of order $\mathcal{O}(\frac{1}{t})$. Then for every agent $i\in\mathcal{N}$, the decision  sequence $x_i^{(t)}$   converges to an optimal solution of \eqref{newfunc} under $\tilde{\boldsymbol{q}}$, i.e. $\lim_{t\rightarrow\infty}x_i^{(t)}=x^*(\tilde{\boldsymbol{q}})$.
			\end{theorem}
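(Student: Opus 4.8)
### Proof Proposal

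The plan is to combine the two recursions established in Lemmas~\ref{lemma_op} and \ref{lemma_consens} into a coupled linear system for the optimization error $a^{(t)}\triangleq\|\bar{x}^{(t)}-x^*(\tilde{\boldsymbol{q}})\|$ and the consensus error $b^{(t)}\triangleq\|\boldsymbol{x}^{(t)}-\boldsymbol{1}\bar{x}^{(t)}\|^2$, and then invoke Lemma~\ref{lemma_con} to drive both to zero. First I would feed the uniform bound from Lemma~\ref{lemma_hatX}, namely $\|\boldsymbol{x}^{(t)}-\boldsymbol{1}x^*(\tilde{\boldsymbol{q}})\|^2\leq\hat{X}$, into the right-hand side of the consensus recursion \eqref{consen_error}; since $\alpha^{(t)}=\mathcal{O}(1/t)$, this turns Lemma~\ref{lemma_consens} into a recursion of the exact form \eqref{rec-et} with $\delta=\tfrac{3+\rho_w^2}{4}\in(0,1)$ and a constant $c>0$ absorbing $\hat X$ and $\sum_i\|\nabla_x\boldsymbol{J}_i(x^*(\tilde{\boldsymbol q}),\boldsymbol\theta)\|^2$. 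Lemma~\ref{lemma_rec_e} then yields $b^{(t)}=\mathcal{O}(1/t^2)$; in particular $\|\boldsymbol{x}^{(t)}-\boldsymbol{1}\bar{x}^{(t)}\|=\mathcal{O}(1/t)$ and this sequence is summable-in-square, even $o(1)$.

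Next I would plug this consensus decay into Lemma~\ref{lemma_op}. Writing $\alpha^{(t)}=\mathcal{O}(1/t)$, the contraction factor $\sqrt{1-\alpha^{(t)}\mu(1-2L\alpha^{(t)})}$ is of the form $1-p^{(t)}$ with $1-p^{(t)}\asymp \tfrac{\mu}{2}\alpha^{(t)}$ for large $t$ (using $\sqrt{1-s}\leq 1-s/2$ and that $2L\alpha^{(t)}<1$ eventually), so $\sum_t(1-p^{(t)})=\infty$ by Assumption~\ref{assum_step}. The additive term $\beta^{(t)}$ collects three pieces: $\tfrac{[\alpha^{(t)}]^{0.5}L}{\sqrt{\mu N}}\|\boldsymbol{x}^{(t)}-\boldsymbol{1}\bar{x}^{(t)}\|$, $\tfrac{\sqrt2 L\alpha^{(t)}}{\sqrt N}\|\boldsymbol{x}^{(t)}-\boldsymbol{1}\bar{x}^{(t)}\|$, and $\alpha^{(t)}\tfrac1N\sum_i\|\boldsymbol{q}_i^{(t)}-\tilde{\boldsymbol q}\|\,\|\nabla_x\boldsymbol{J}_i(x^*(\tilde{\boldsymbol q}),\boldsymbol\theta)\|$. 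The first two are $\mathcal{O}([\alpha^{(t)}]^{1.5})=\mathcal{O}(t^{-3/2})$ using $b^{(t)}=\mathcal{O}(t^{-2})$, hence $\beta^{(t)}/(1-p^{(t)})=\mathcal{O}(t^{-1/2})\to0$. For the third, Theorem~\ref{lemma_be_con} gives $\boldsymbol{q}_i^{(t)}\to\tilde{\boldsymbol q}$ almost surely for every $i$, so $\tfrac1N\sum_i\|\boldsymbol{q}_i^{(t)}-\tilde{\boldsymbol q}\|\to0$; since $\|\nabla_x\boldsymbol{J}_i(x^*(\tilde{\boldsymbol q}),\boldsymbol\theta)\|$ is a finite constant, this piece divided by $1-p^{(t)}\asymp\alpha^{(t)}$ is itself $\tfrac{2}{\mu N}\sum_i\|\boldsymbol{q}_i^{(t)}-\tilde{\boldsymbol q}\|\,\|\nabla_x\boldsymbol{J}_i\|\to0$. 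Thus all hypotheses of Lemma~\ref{lemma_con} hold, giving $a^{(t)}=\|\bar{x}^{(t)}-x^*(\tilde{\boldsymbol q})\|\to0$.

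Finally I would conclude per-agent convergence: $\|x_i^{(t)}-x^*(\tilde{\boldsymbol q})\|\leq\|x_i^{(t)}-\bar{x}^{(t)}\|+\|\bar{x}^{(t)}-x^*(\tilde{\boldsymbol q})\|\leq\|\boldsymbol{x}^{(t)}-\boldsymbol{1}\bar{x}^{(t)}\|+a^{(t)}\to0$, since the first term vanishes by the consensus decay and the second by the step above. This establishes $\lim_{t\to\infty}x_i^{(t)}=x^*(\tilde{\boldsymbol q})$ for every $i\in\mathcal{N}$.

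The main obstacle I anticipate is the bookkeeping needed to verify that the additive error $\beta^{(t)}$ in the optimization recursion is genuinely $o(1-p^{(t)})$, which requires the consensus error to decay strictly faster than $\alpha^{(t)}$ — this is why Lemma~\ref{lemma_rec_e} is proved with the sharp $\mathcal{O}(1/t^2)$ rate rather than mere summability, and why the uniform bound of Lemma~\ref{lemma_hatX} (whose own proof must be circular-free, i.e.\ not rely on the convergence we are proving) is needed first. A secondary subtlety is that the $\boldsymbol{q}_i^{(t)}\to\tilde{\boldsymbol q}$ convergence is only almost sure, so the final statement holds almost surely; one should also note the nonnegativity requirement $u^{(t)}\geq0$ in Lemma~\ref{lemma_con} is trivially met since $u^{(t)}=a^{(t)}$ is a norm.
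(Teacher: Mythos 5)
Your proposal is correct and follows essentially the same route as the paper: consensus error driven to $\mathcal{O}(1/t^2)$ via Lemmas~\ref{lemma_consens}, \ref{lemma_hatX} and \ref{lemma_rec_e}, then the averaged-iterate error handled by casting Lemma~\ref{lemma_op} as the recursion of Lemma~\ref{lemma_con} with $1-p^{(t)}\sim\tfrac{\mu}{2}\alpha^{(t)}$ and $\beta^{(t)}/(1-p^{(t)})\to0$ using the belief convergence of Theorem~\ref{lemma_be_con}, and finally a triangle-type inequality to pass to each agent. Your added remarks on the almost-sure qualifier and the non-circularity of Lemma~\ref{lemma_hatX} are consistent with (and slightly more explicit than) the paper's treatment.
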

			\begin{proof}
				By Lemma \ref{lemma_consens} and Lemma  \ref{lemma_hatX}, we define
				 $e^{(t)}=\|\boldsymbol{x}^{(t)}-\boldsymbol{1}\bar{x}^{(t)}\|^2 $, $\delta=\frac{3+\rho_w^2}{4}$,  and
					\begin{align*}
						&c=\frac{3 \rho_w^2}{1-\rho_w^2} \left[2 M^2 L^2\hat{X}+ 2 M  \sum_{i=1}^N\|\nabla_x \boldsymbol{J}_i(x^*(\tilde{\boldsymbol{q}}), \boldsymbol{\theta})\|^2 \right].
					\end{align*}
		Then we can recast  Lemma   \ref{lemma_consens} as  the recursion of Lemma \ref{lemma_rec_e}.
						Since $\rho_w\in [0,1), $ we have $\delta\in [3/4,1)$. Then by using Lemma \ref{lemma_rec_e}, we conclude that    the consensus error $\|\boldsymbol{x}^{(t)}-\boldsymbol{1}\bar{x}^{(t)}\|^2$ diminishes to $0$ at rate $\mathcal{O}(\frac{1}{t^2})$.
				
				Besides, in light of Lemma \ref{lemma_con} and Lemma  \ref{lemma_op}, we set
				\begin{align}
					u^{(t)}:=&\|\bar{x}^{(t)}-x^*(\tilde{\boldsymbol{q}})\|\notag,\\
					p^{(t)}:=&\sqrt{1-\alpha^{(t)}\mu+2\mu L[\alpha^{(t)}]^2}\notag,\\
					\beta^{(t)}:=&\frac{ [\alpha^{(t)}]^{0.5}L}{\sqrt{\mu N}} \|\boldsymbol{x}^{(t)}-\boldsymbol{1}\bar{x}^{(t)}\|+\frac{\sqrt{2}L\alpha^{(t)}}{\sqrt{N}} \|\boldsymbol{x}^{(t)}-\boldsymbol{1}\bar{x}^{(t)}\|\notag\\
					&~+\alpha^{(t)}\frac{1}{N}\sum_{i=1}^{N}\|\boldsymbol{q}_i^{(t)}-\tilde{\boldsymbol{q}} \| \| \nabla_x\boldsymbol{J}_i(x^*(\tilde{\boldsymbol{q}}),\boldsymbol{\theta})\|. \notag
				\end{align}
			Since $\alpha^{(t)}<\frac{1}{2L}$, $0\leq1-\alpha^{(t)}\mu(1-2\alpha^{(t)}L)<1$, therefore $0\leq p^{(t)}<1$. Obviously, $\beta^{(t)}\geq0$.
				Note that
				\begin{align}
					\lim_{y\rightarrow0}\frac{1-\sqrt{1-y}}{0.5y}
					\xlongequal[y=2z-z^2]{z=1-\sqrt{1-y}}\lim_{z\rightarrow0}\frac{z}{z-0.5z^2}=1.
				\end{align}
			Thus, getting limit with substitution of equivalence infinitesimal, we have $\left(1-p^{(t)}\right)\sim \left(0.5\alpha^{(t)}\mu-\mu L[\alpha^{(t)}]^2\right)$.
			Therefore,  by recalling $\sum\limits_{t=1}^{\infty}\alpha^{(t)}=\infty$ from Assumption \ref{assum_step}, we have
			\begin{align}
				\sum_{t=1}^{\infty}(1-p^{(t)})=\sum_{t=1}^{\infty}\left(0.5\alpha^{(t)}\mu-\mu L[\alpha^{(t)}]^2\right)\!=\infty.\label{condition1}
				\end{align}

			Consider
			\begin{align}
				&\lim_{t\rightarrow\infty}\frac{\beta^{(t)}}{1-p^{(t)}}=\frac{L}{\sqrt{\mu N}}\lim_{t\rightarrow\infty}\frac{[\alpha^{(t)}]^{0.5}\|\boldsymbol{x}^{(t)}-\boldsymbol{1}\bar{x}^{(t)}\|}{0.5\alpha^{(t)}\mu-\mu L[\alpha^{(t)}]^2}\notag\\
				&\quad+\sqrt{2}L\lim_{t\rightarrow\infty}\frac{\alpha^{(t)}L\|\boldsymbol{x}^{(t)}-\boldsymbol{1}\bar{x}^{(t)}\|}{0.5\alpha^{(t)}\mu-\mu L[\alpha^{(t)}]^2}\notag\\
				&~+\lim_{t\rightarrow\infty}\frac{\alpha^{(t)}\frac{1}{N}\sum_{i=1}^{N}\|\boldsymbol{q}_i^{(t)}-\tilde{\boldsymbol{q}} \| \| \nabla_x\boldsymbol{J}_i(x^*(\tilde{\boldsymbol{q}}),\boldsymbol{\theta})\|}{0.5\alpha^{(t)}\mu-\mu L[\alpha^{(t)}]^2}.\label{con2}
			\end{align}
		Since $\alpha^{(t)}=\mathcal{O}(\frac{1}{t})$ and $\|\boldsymbol{x}^{(t)}-\boldsymbol{1}\bar{x}^{(t)}\|=\mathcal{O}(\frac{1}{t})$ when $t\rightarrow \infty$, we can conclude that the limit of the first two terms of \eqref{con2} is $0$.
		As for the last term of \eqref{con2}, recalling Theorem \ref{lemma_be_con}, we have $\lim_{t\rightarrow\infty}\|\boldsymbol{q}_i^{(t)}-\tilde{\boldsymbol{q}} \| =0$. Together with $\| \nabla_x\boldsymbol{J}_i(x^*(\tilde{\boldsymbol{q}}),\boldsymbol{\theta})\|$ is bounded with a fixed point, we can obtain that the limit of the last term of \eqref{con2} also comes to $0$.
		As a result,
		\begin{align}
			\lim_{t\rightarrow\infty}\frac{\beta^{(t)}}{1-p^{(t)}}=0\label{condition2}.
		\end{align}
	
	Combining $0\leq p^{(t)}<1$ and  $\beta^{(t)}\geq 0$, together with \eqref{condition1} and \eqref{condition2}, we see that the conditions of Lemma \ref{lemma_con} hold. Therefore, by applying  Lemma \ref{lemma_con}, we conclude that $u^{(t)}\rightarrow 0$ as $t\rightarrow0$, i.e. $\|\bar{x}^{(t)}-x^*(\tilde{\boldsymbol{q}})\|\rightarrow0$.

Therefore, by recalling that $\|\boldsymbol{x}^{(t)}-\boldsymbol{1}\bar{x}\|^2\rightarrow0$ and $\|\bar{x}^{(t)}-x^*(\tilde{\boldsymbol{q}})\|^2\rightarrow0$ with $t\rightarrow\infty$, we achieve
				 \begin{align} \|\boldsymbol{x}^{(t)}-\boldsymbol{1}x^*(\tilde{\boldsymbol{q}})\|^2=\|\boldsymbol{x}^{(t)}-\boldsymbol{1}\bar{x}^{(t)}+\boldsymbol{1}\bar{x}^{(t)}-\boldsymbol{1}x^*(\tilde{\boldsymbol{q}})\|^2\notag\\					  \leq2\|\boldsymbol{x}^{(t)}-\boldsymbol{1}\bar{x}\|^2+2\|\boldsymbol{1}\bar{x}-\boldsymbol{1}x^*(\tilde{\boldsymbol{q}})\|^2\notag\\
 =2\|\boldsymbol{x}^{(t)}-\boldsymbol{1}\bar{x}\|^2+2N\|\bar{x}^{(t)}-x^*(\tilde{\boldsymbol{q}})\|^2 \rightarrow 0\notag,
					 \end{align}
  Hence for all $i\in\mathcal{N}$, $\lim_{t\rightarrow\infty}x_i^{(t)}=x^*(\tilde{\boldsymbol{q}})$.
			\end{proof}
			%

			\subsection{Convergence to the True Solution}
			
			Though the algorithm can converge to $x^*(\tilde{\boldsymbol{q}})$ based on Theorem 1 and Theorem 2, whether it can converge to the true solution $x^*(\boldsymbol{q}^*)$ remains unknown.  In the following,  we will validate that $\tilde{\boldsymbol{q}}=\boldsymbol{q}^*.$
			First of all, we introduce Toeplitz's lemma \cite{knopp1990theory} to help develop the convergence result.
		
			\begin{lemma}
				Let $\{A_{nk}, 1\leq k\leq k_n\}_{n\geq1}$ be a double array of positive numbers such that for fixed $k$, $A_{nk}\rightarrow 0$ when $n\rightarrow\infty$. Let $\{Y_n\}_{n\geq 1}$ be a sequence of real numbers. If $Y_n\rightarrow y$ and $\sum_{k=1}^{k_n}A_{nk}\rightarrow 1$ when $n\rightarrow \infty$, then $\lim\limits_{n \rightarrow \infty} \sum_{k=1}^{k_n} A_{nk}Y_k=y$.\label{Toep}
			\end{lemma}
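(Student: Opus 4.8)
The plan is to use the classical head/tail splitting argument. The difference $\sum_{k=1}^{k_n}A_{nk}Y_k-y$ will be rewritten as a weighted sum of the deviations $Y_k-y$ plus a bias term that vanishes by hypothesis; the weighted-deviation sum is then split into a finite head, killed by the column-wise convergence $A_{nk}\to 0$, and a tail that is uniformly small because the deviations are eventually small while the row sums stay bounded.

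First I would record two elementary observations. Since $Y_n\to y$, the deviation sequence is bounded: there is $M>0$ with $|Y_k-y|\le M$ for all $k$. Since $\sum_{k=1}^{k_n}A_{nk}\to 1$, there is $n_0$ with $\sum_{k=1}^{k_n}A_{nk}\le 2$ for all $n\ge n_0$ (and, incidentally, this forces $k_n\to\infty$, since a uniformly bounded number of columns would sum to $0$). Then, for every $n$,
\[
\sum_{k=1}^{k_n}A_{nk}Y_k-y=\sum_{k=1}^{k_n}A_{nk}(Y_k-y)+\Big(\sum_{k=1}^{k_n}A_{nk}-1\Big)y,
\]
and the last term tends to $0$ by hypothesis, so it remains to show that the weighted-deviation sum does too.

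Next, given $\varepsilon>0$, I would choose $K$ with $|Y_k-y|<\varepsilon$ for all $k>K$. For $n\ge n_0$ large enough that $k_n>K$, split the sum at index $K$ to get
\[
\Big|\sum_{k=1}^{k_n}A_{nk}(Y_k-y)\Big|\le M\sum_{k=1}^{K}A_{nk}+\varepsilon\sum_{k=K+1}^{k_n}A_{nk}\le M\sum_{k=1}^{K}A_{nk}+2\varepsilon.
\]
Since $K$ is fixed and $A_{nk}\to 0$ as $n\to\infty$ for each of the finitely many indices $k\le K$, the first term vanishes, so $\limsup_{n\to\infty}\big|\sum_{k=1}^{k_n}A_{nk}(Y_k-y)\big|\le 2\varepsilon$. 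As $\varepsilon$ was arbitrary, this limsup is $0$, and combined with the displayed identity this yields $\lim_{n\to\infty}\sum_{k=1}^{k_n}A_{nk}Y_k=y$.

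There is no genuine difficulty in this argument; the only places that call for a little care are the bookkeeping for small $k_n$ (handled by passing to large $n$, where $k_n>K$) and noticing that boundedness of $(Y_k-y)$ comes for free from convergence. The essential mechanism — a finite head of vanishing columns plus a tail controlled by smallness of the deviations and boundedness of the row sums — is exactly the standard proof of the Silverman–Toeplitz lemma.
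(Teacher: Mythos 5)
Your argument is correct and complete: the decomposition into the bias term $(\sum_k A_{nk}-1)y$ plus the weighted deviations, followed by the head/tail split using boundedness of $|Y_k-y|$, column-wise vanishing of $A_{nk}$, and the eventual bound $\sum_k A_{nk}\le 2$, is the standard Silverman--Toeplitz proof, and your aside that $k_n\to\infty$ is also justified. The paper itself gives no proof of this lemma --- it imports it by citation from Knopp --- so there is nothing to contrast; your write-up correctly supplies the canonical argument.
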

			
			Based on which, we obtain the following Theorem.
			\begin{theorem}\label{thm3}
				Let Assumptions \ref{assum_boundbelief}, \ref{assum_graph}, \ref{assum_step}, \ref{assum_func},  and \ref{assum_optimal} hold  with the stepsize $\alpha^{(t)}$ of order $\mathcal{O}(\frac{1}{t})$. Consider the belief sequence $\{q_i^{(t)}\}_{t\geq 0}$ generated by Algorithm \ref{alg:CDSA}. Then, every agent's estimate almost surely converges to the true parameter $\theta_*$. In addition , for each agents $i\in\mathcal{N}$ and $\theta_m\neq\theta_*$,
					\begin{align}
					q_i^{(T+1)}\left(\theta_m\right) \leq \exp \left(-Z(\theta_*,\theta_m)\sum_{t=1}^T \alpha^{(t)} \right) \quad \text { a.s. }\label{belief_rate}
				\end{align}
			where
			\begin{small}
					\begin{align*}
					Z(\theta_*,\theta_m)\!=\!\frac{1}{N}\!\sum_{j=1}^N \! D_{K L}\!\left(f_j\left(y_j | x^*(\tilde{\boldsymbol{q}}), \theta_*\right)\! \| f_j\left(y_j | x^*(\tilde{\boldsymbol{q}}), \theta_m\right)\right).
				\end{align*}
			\end{small}
			\end{theorem}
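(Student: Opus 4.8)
The plan is to track, for a fixed $\theta_m\neq\theta_*$, the log-belief ratio $\phi_i^{(t)}:=\log\frac{q_i^{(t)}(\theta_m)}{q_i^{(t)}(\theta_*)}$, show that $\phi_i^{(T+1)}\big/\sum_{t=1}^{T}\alpha^{(t)}\to -Z(\theta_*,\theta_m)$ almost surely, and then read off both the rate \eqref{belief_rate} and the almost sure convergence $q_i^{(t)}\to\boldsymbol q^*$. The starting point is already available from the proof of Lemma~\ref{lemma_logcon}: by \eqref{sum_log_qq} the network average $\bar\phi^{(t+1)}:=\frac1N\sum_{i=1}^N\phi_i^{(t+1)}$ satisfies $\bar\phi^{(T+1)}=\sum_{s=1}^{T}\alpha^{(s)}\bar L^{(s)}$, where $\bar L^{(s)}:=\frac1N\sum_{j=1}^{N}\log\frac{f_j(y_j^{(s)}\mid x_j^{(s)},\theta_m)}{f_j(y_j^{(s)}\mid x_j^{(s)},\theta_*)}$ (the prior is uniform, so the $q^{(0)}$ term vanishes). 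So the task reduces to computing the Cesàro-type limit of $\bar L^{(s)}$ against the weights $\alpha^{(s)}$.

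First I would split $\bar L^{(s)}=\mathbb{E}[\bar L^{(s)}\mid\mathcal F_s]+\big(\bar L^{(s)}-\mathbb{E}[\bar L^{(s)}\mid\mathcal F_s]\big)$. Since $x_j^{(s)}$ is $\mathcal F_s$-measurable and $y_j^{(s)}\sim f_j(\cdot\mid x_j^{(s)},\theta_*)$, one has $\mathbb{E}[\bar L^{(s)}\mid\mathcal F_s]=-\frac1N\sum_{j}D_{KL}\!\big(f_j(\cdot\mid x_j^{(s)},\theta_*)\,\|\,f_j(\cdot\mid x_j^{(s)},\theta_m)\big)$. By Theorem~\ref{thm2}, $x_j^{(s)}\to x^*(\tilde{\boldsymbol q})$ a.s.; combining the continuity of $f_j(\cdot\mid x,\theta)$ in $x$ and the uniform bound on the log-likelihood ratio (both from Assumption~\ref{assum_boundbelief}) with dominated convergence, each KL term tends to $D_{KL}\!\big(f_j(\cdot\mid x^*(\tilde{\boldsymbol q}),\theta_*)\,\|\,f_j(\cdot\mid x^*(\tilde{\boldsymbol q}),\theta_m)\big)$, hence $\mathbb{E}[\bar L^{(s)}\mid\mathcal F_s]\to -Z(\theta_*,\theta_m)$ a.s. For the fluctuation part, $M_T:=\sum_{s=1}^{T}\alpha^{(s)}\big(\bar L^{(s)}-\mathbb{E}[\bar L^{(s)}\mid\mathcal F_s]\big)$ is a martingale with increments bounded by $2B\alpha^{(s)}$, so $\sum_s\mathbb{E}[(\Delta M_s)^2]\le 4B^2\sum_s(\alpha^{(s)})^2<\infty$ by Assumption~\ref{assum_step}; thus $M_T$ converges a.s. to a finite limit, and since $\sum_s\alpha^{(s)}=\infty$ we get $M_T/\sum_{s=1}^{T}\alpha^{(s)}\to0$. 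For the conditional-mean part I would apply Toeplitz's Lemma~\ref{Toep} with the array $A_{Ts}=\alpha^{(s)}/\sum_{s'=1}^{T}\alpha^{(s')}$, which satisfies $A_{Ts}\to0$ for fixed $s$ and $\sum_{s=1}^{T}A_{Ts}=1$, giving $\sum_{s=1}^T A_{Ts}\,\mathbb{E}[\bar L^{(s)}\mid\mathcal F_s]\to -Z(\theta_*,\theta_m)$. Summing the two contributions yields $\bar\phi^{(T+1)}\big/\sum_{t=1}^{T}\alpha^{(t)}\to -Z(\theta_*,\theta_m)$ a.s.

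Next I would pass from the network average to each individual agent. Lemma~\ref{lemma_logcon} shows $|\phi_i^{(T+1)}-\bar\phi^{(T+1)}|\to0$, and in fact its proof bounds this gap by $NB\sum_{\tau=1}^{T}\alpha^{(T-\tau+1)}\big|W^\tau(i,j)-\tfrac1N\big|$, which is $o\!\big(\sum_{t=1}^{T}\alpha^{(t)}\big)$ because $\big|W^\tau(i,j)-\tfrac1N\big|$ decays geometrically (Lemma~\ref{W_lim}); hence $\phi_i^{(T+1)}\big/\sum_{t=1}^{T}\alpha^{(t)}\to -Z(\theta_*,\theta_m)$ for every $i$. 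By Assumption~\ref{assum_optimal}, for $\theta_m\neq\theta_*$ at least one agent has $D_{KL}\!\big(f_j(\cdot\mid x^*(\tilde{\boldsymbol q}),\theta_*)\,\|\,f_j(\cdot\mid x^*(\tilde{\boldsymbol q}),\theta_m)\big)>0$ while every KL term is nonnegative, so $Z(\theta_*,\theta_m)>0$ (and $\le B<\infty$). Using $q_i^{(T+1)}(\theta_m)\le q_i^{(T+1)}(\theta_m)/q_i^{(T+1)}(\theta_*)=\exp\!\big(\phi_i^{(T+1)}\big)$ together with $\phi_i^{(T+1)}\le -(Z(\theta_*,\theta_m)+o(1))\sum_{t=1}^{T}\alpha^{(t)}$ gives \eqref{belief_rate}. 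Finally $\phi_i^{(T+1)}\to-\infty$ forces $q_i^{(T+1)}(\theta_m)/q_i^{(T+1)}(\theta_*)\to0$ for all $\theta_m\neq\theta_*$; since $\sum_m q_i^{(T+1)}(\theta_m)=1$, this gives $q_i^{(T+1)}(\theta_*)\to1$ and $q_i^{(T+1)}(\theta_m)\to0$, i.e. every agent's estimate converges a.s. to the true parameter (equivalently $\tilde{\boldsymbol q}=\boldsymbol q^*$ and $x^*(\tilde{\boldsymbol q})=x_*$).

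The step I expect to be the main obstacle is the identity $\mathbb{E}[\bar L^{(s)}\mid\mathcal F_s]\to -Z(\theta_*,\theta_m)$: one must upgrade the pointwise a.s.\ convergence $x_j^{(s)}\to x^*(\tilde{\boldsymbol q})$ to convergence of the integral $\int f_j(y\mid x_j^{(s)},\theta_*)\log\frac{f_j(y\mid x_j^{(s)},\theta_*)}{f_j(y\mid x_j^{(s)},\theta_m)}\,dy$, which requires separating the change in the base density $f_j(\cdot\mid x,\theta_*)$ (controlled via total-variation continuity of the density in $x$, e.g.\ Scheff\'e's theorem) from the change in the bounded integrand (dominated convergence, using Assumption~\ref{assum_boundbelief}). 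A secondary subtlety is that \eqref{belief_rate} should be read as an eventual (a.s.) bound obtained from the ratio limit, not as a deterministic term-by-term inequality, since the individual increments $\log\frac{f_j(y_j^{(s)}\mid x_j^{(s)},\theta_m)}{f_j(y_j^{(s)}\mid x_j^{(s)},\theta_*)}$ are only centered at $-D_{KL}$ in conditional mean.
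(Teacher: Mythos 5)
Your proof is correct, but it reaches the key limit
$\lim_{T\to\infty}\bigl(\sum_{t=1}^T\alpha^{(t)}\bigr)^{-1}\log\frac{q_i^{(T+1)}(\theta_*)}{q_i^{(T+1)}(\theta_m)}=Z(\theta_*,\theta_m)$
by a genuinely different route than the paper. The paper works with the unweighted time average $\frac1T\sum_{t\le T}z_j^{(t)}$: it couples each $z_j^{(t)}$ to an i.i.d.\ copy $\eta_j^{(t)}=(G_j^*)^{-1}\bigl(G_j^{(t)}(z_j^{(t)})\bigr)$ drawn from the limiting distribution via the probability integral transform, invokes the strong law of large numbers for $\{\eta_j^{(t)}\}$ to get \eqref{as_sez}, and then converts the Ces\`aro limit into the $\alpha$-weighted limit through an Abel-summation Toeplitz array $A_{Tt}=t(\alpha^{(t)}-\alpha^{(t+1)})/\sum_{t\le T}\alpha^{(t)}$, $A_{TT}=T\alpha^{(T)}/\sum_{t\le T}\alpha^{(t)}$. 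You instead perform a Doob-type decomposition of $\bar L^{(s)}$ into its conditional mean plus a martingale-difference term, kill the fluctuation with the $L^2$-bounded martingale convergence theorem (using Assumptions \ref{assum_boundbelief} and \ref{assum_step} for the increment bound and square-summability), and apply Toeplitz's Lemma \ref{Toep} directly with the natural weights $A_{Ts}=\alpha^{(s)}/\sum_{s'\le T}\alpha^{(s')}$ to the conditional means. Your version avoids the somewhat delicate claim in the paper that the $\Delta_j^{(t)}$ are independent uniforms living on the same probability space as the $z_j^{(t)}$, and it requires no summation by parts; what it costs is that you must prove continuity of $x\mapsto D_{KL}\bigl(f_j(\cdot|x,\theta_*)\|f_j(\cdot|x,\theta_m)\bigr)$ directly (your Scheff\'e-plus-dominated-convergence split is the right tool and is no harder than the paper's own unproved assertion that $G_j^{(t)}\to G_j^*$ pointwise). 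The remaining steps --- passing from the network average to individual agents via Lemma \ref{lemma_logcon}, invoking Assumption \ref{assum_optimal} for $Z>0$, and extracting \eqref{belief_rate} and $q_i^{(t)}(\theta_*)\to1$ --- coincide with the paper's, and your caveat that \eqref{belief_rate} is an eventual almost-sure bound obtained from the ratio limit applies equally to the paper's own derivation.
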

			\begin{proof}
				 Firstly, we give an equivalent form of $\lim _{T \rightarrow \infty}\frac{1}{\sum_{t=1}^T \alpha^{(t)}} \log \frac{q_i^{(T+1)}\left(\theta_*\right)}{q_i^{(T+1)}\left(\theta_m\right)}$. This is the preparation for the later use of Lemma \ref{Toep} to derive the overall convergence.
				 Based on the belief update rules (\ref{b_up}) and (\ref{q_up}),  and similarly to the derivation of   \eqref{log_qq}, we derive
				\begin{align}
					&   \log \frac{q_i^{(T+1)}\left(\theta_*\right)}{q_i^{(T+1)}\left(\theta_m\right)} \notag
					\\  =&  \sum_{j=1}^N \sum_{t=1}^T W^t(i,j) \alpha^{(T-t+1)} z_j^{(T-t+1)}(\theta_*,\theta_m) , \label{bd-logq}
				\end{align}
				where $z_j^{(t)}(\theta_*,\theta_m)=\log \frac{f_j\left(y_j^{(t)} \mid x_j^{(t)}, \theta_*\right)}{f_j\left(y_j^{(t)} \mid x_j^{(t)}, \theta_m\right)}$.
				With Assumption \ref{assum_graph}, we achieve
				the double stochasticity of  $W^t$.  Then by using  \eqref{bd-logq},  we have
				\begin{align}
					&\frac{1}{N}\sum_{i=1}^{N}\log \frac{q_i^{(T+1)}\left(\theta_*\right)}{q_i^{(T+1)}\left(\theta_m\right)}\notag\\
					&=\frac{1}{N}\sum_{i=1}^{N}\sum_{j=1}^N \sum_{t=1}^T W^t(i,j) \alpha^{(T-t+1)} z_j^{(T-t+1)}(\theta_*,\theta_m)\notag\\
					&=\frac{1}{N}\sum_{j=1}^{N}\sum_{t=1}^T \alpha^{(T-t+1)}  z_j^{(T-t+1)}(\theta_*,\theta_m) \notag
					\\&= \frac{1}{N}\sum_{j=1}^{N}\sum_{t=1}^T \alpha^{(t)}  z_j^{(t)}(\theta_*,\theta_m).\label{equ_logq}
				\end{align}
				
			%
			
			By  utilizing  Lemma \ref{lemma_logcon} and Assumption \ref{assum_step}, 
			\begin{align}
				& \lim _{T \rightarrow \infty} \tfrac{1}{\sum_{t=1}^T \alpha^{(t)}}\left(\log \tfrac{q_i^{(T+1)}\left(\theta_*\right)}{q_i^{(T+1)}\left(\theta_m\right)}-\tfrac{1}{N} \sum_{i=1}^N \log \tfrac{q_i^{(T+1)}\left(\theta_*\right)}{q_i^{(T+1)}\left(\theta_m\right)}\right) \notag\\
				&	= 0.
			\end{align}
			Therefore,
			\begin{align}
				&	\lim _{T \rightarrow \infty}\frac{1}{\sum_{t=1}^T \alpha^{(t)}} \log \frac{q_i^{(T+1)}\left(\theta_*\right)}{q_i^{(T+1)}\left(\theta_m\right)}\notag\\
				&= \lim _{T \rightarrow \infty} \tfrac{1}{\sum_{t=1}^T \alpha^{(t)}} \tfrac{1}{N} \sum_{i=1}^N \log \tfrac{q_i^{(T+1)}\left(\theta_*\right)}{q_i^{(T+1)}\left(\theta_m\right)}  \notag \\
				& \overset{\eqref{equ_logq}}{=} 	 \frac{1}{N} \sum_{j=1}^N  \lim _{T \rightarrow \infty} \frac{1}{\sum_{t=1}^T \alpha^{(t)}}\sum_{t=1}^T \alpha^{(t)} z_j^{(t)}\left(\theta_*, \theta_m\right).\label{sim_zq}
			\end{align}
			
			To investigate  the convergence of the above equation, we first study the convergence of $\frac{1}{T} \sum_{t=1}^T z_j^{(t)}\left(\theta_*, \theta_m\right)$ by newly shaped random variable. Based on which we can later use strong Large Number Theorem in the limit case.
			
			Denote the cumulative distribution function as follow
			\begin{align*}
				G_j^{(t)}(z)\triangleq Pr\left(\log \frac{f_j\left(y_j \mid x_j^{(t)}, \theta_*\right)}{f_j\left(y_j \mid x_j^{(t)}, \theta_m\right)}\leq z\right),\\
				G_j^*(z)\triangleq Pr\left(\log \frac{f_j\left(y_j \mid x^*(\tilde{\boldsymbol{q}}), \theta_*\right)}{f_j\left(y_j \mid x^*(\tilde{\boldsymbol{q}}), \theta_m\right)}\leq z\right).
			\end{align*}
			Then, since $x_j^{(t)}\rightarrow x^*(\tilde{\boldsymbol{q}})$ as $t\rightarrow\infty$ and by the continuity of the likelihood function (Assumption \ref{assum_boundbelief}), we have
			\begin{align}
				\lim_{t \rightarrow \infty} G_j^{(t)}(z)=G_j^*(z), ~\forall z\in \mathbb{R}.\label{lim_G}
			\end{align}
			For any sequence of realized outcomes $\{(x_j^{(t)},y_j^{(t)})\}_{t=1}^{\infty}$, we define a sequence of random variable $\{\Delta_j^{(t)}\}_{t=1}^{\infty}$, where $\Delta_j^{(t)}\triangleq G_j^{(t)}(z_j^{(t)}(\theta_*,\theta_m))$. Then $\Delta_j^{(t)}\in [0,1]$, and for any $\beta\in [0,1]$,
			\begin{align*}
				Pr(\Delta_j^{(t)}\leq \beta)&=Pr\left(G_j^{(t)}(z_j^{(t)}(\theta_*,\theta_m))\leq\beta\right)\\
				&=Pr\left(z_j^{(t)}(\theta_*,\theta_m)\leq (G_j^{(t)})^{-1}(\beta)\right)\\
				&=G_j^{(t)}(G_j^{(t)})^{-1}(\beta)=\beta.
			\end{align*}
			That is, $\Delta_j^{(t)}$ is independent and uniformly distributed on $[0,1]$.
			
			Consider another sequence of random variables $\{\eta_j^{(t)}\}_{t=1}^{\infty}$, where $\eta_j^{(t)}\triangleq(G_j^*)^{-1}(\Delta_j^{(t)})$. Since $\Delta_j^{(t)}$ is i.i.d with uniform distribution, $\eta_j^{(t)}$ is also i.i.d with the same distribution as $\log \frac{f_j\left(y_j \mid x^*(\tilde{\boldsymbol{q}}), \theta_*\right)}{f_j\left(y_j \mid x^*(\tilde{\boldsymbol{q}}), \theta_m\right)}$. Additionally, since each $\Delta_j^{(t)}$ is generated from the realized outcome $(x_j^{(t)},y_j^{(t)})$, $(\eta_j^{(t)})_{t=1}^{\infty}$ is in the same probability space as $z_j^{(t)}(\theta_*,\theta_m)$. From \eqref{lim_G}, $G_j^{(t)}$ converge to $G_j^*$ as $t\rightarrow \infty$. Therefore, with probability $1$,
			\begin{align}
				&\lim_{t\rightarrow \infty} \left|z_j^{(t)}(\theta_*,\theta_m)-\eta_j^{(t)}\right|\notag\\
				=&\lim_{t \rightarrow \infty}\left|z_j^{(t)}(\theta_*,\theta_m)-(G_j^*)^{-1}\left(G_j^{(t)}\left(z_j^{(t)}(\theta_*,\theta_m)\right)\right)\right|\notag\\
				=&0\notag
			\end{align}
			Consequently, w.p.1
			\begin{align}
				\lim_{T\rightarrow\infty}\left|\frac{1}{T}\sum_{t=1}^{T}\left(z_j^{(t)}(\theta_*,\theta_m)-\eta_j^{(t)}\right)\right|\notag\\
				\leq \lim_{T \rightarrow \infty}\frac{1}{T}\sum_{t=1}^{T}\left|z_j^{(t)}(\theta_*,\theta_m)-\eta_j^{(t)}\right|=0.
			\end{align}
This  together with $(\eta_j^{(t)})_{t=1}^{\infty}$ is i.i.d with  the distribution of
			$\log \frac{f_j\left(y_j \mid x^*(\tilde{\boldsymbol{q}}), \theta_*\right)}{f_j\left(y_j \mid x^*(\tilde{\boldsymbol{q}}), \theta_m\right)}$, by the strong Large Number Theorem
			\begin{align} \label{as_sez}
				\lim _{T \rightarrow \infty} \frac{1}{T} \sum_{t=1}^T z_j^{(t)}\left(\theta_*, \theta_m\right)=\frac{1}{T}\sum_{t=1}^{T}\eta_j^{(t)}\notag\\
				=\mathbb{E}\left[\log \frac{f_j\left(y_j \mid x^*(\tilde{\boldsymbol{q}}), \theta_*\right)}{f_j\left(y_j \mid x^*(\tilde{\boldsymbol{q}}), \theta_m\right)}\right]\quad a.s.
			\end{align}
			
			Secondly, recalling the equivalence form in (\ref{sim_zq}), we use double array convergence principle in Lemma \ref{Toep} to derive the convergence of $\lim _{T \rightarrow \infty}\frac{1}{\sum_{t=1}^T \alpha^{(t)}} \log \frac{q_i^{(T+1)}\left(\theta_*\right)}{q_i^{(T+1)}\left(\theta_m\right)}$. One of the array shows in above result (\ref{as_sez}), the other is created by mathematical technique as follow. 
			
			Note that $T \alpha^{(T)}+\sum_{t=1}^{T-1} t\left(\alpha^{(t)}-\alpha^{(t+1)}\right)=\sum_{t=1}^T \alpha^{(t)}$.  Define $Y_t=\frac{1}{t} \sum_{\tau=1}^t z_j^{(\tau)}\left(\theta_*, \theta_m\right)$, and the sequence
			$\{A_{T t}, 1\leq t\leq T\}_{T\geq 1}$ with $A_{T t}=\frac{t\left(\alpha^{(t)}-\alpha^{(t+1)}\right)}{\sum_{t=1}^T \alpha^{(t)}}(t=1, \ldots, T-1), A_{T T}=\frac{T \alpha^{(T)}}{\sum_{t=1}^T \alpha^{(t)}}$. Then from (\ref{sim_zq}) we derive
			\begin{align*}
				& \lim _{T \rightarrow \infty} \frac{1}{\sum_{t=1}^T \alpha^{(t)}} \log \frac{q_i^{(T+1)}\left(\theta_*\right)}{q_i^{(T+1)}\left(\theta_m\right)} \notag\\
				= & \frac{1}{N} \sum_{j=1}^N \lim _{T \rightarrow \infty} \frac{1}{\sum_{t=1}^T \alpha^{(t)}}\left(T \alpha^{(T)} \cdot \frac{1}{T} \sum_{t=1}^T z_j^{(t)}\left(\theta_*, \theta_k\right)\right.\notag\\
				&\left.+\sum_{t=1}^{T-1} t\left(\alpha^{(t)}-\alpha^{(t+1)}\right) \cdot \frac{1}{t} \sum_{\tau=1}^t z_j^{(\tau)}\left(\theta_*, \theta_m\right)\right) \notag
				\\& = \frac{1}{N} \sum_{j=1}^N \lim _{T \rightarrow \infty}  \sum_{t=1}^TA_{T t} Y_t .
			\end{align*}
			By noticing that  $\sum_{t=1}^T A_{T t}=1,$ and the almost sure convergence of $\{Y_t\}$ from  \eqref{as_sez}, we  conclude  from Lemma \ref{Toep} that the following holds almost surely.
			\begin{align}\label{limit-logq}
				&\lim _{T \rightarrow \infty} \frac{1}{\sum_{t=1}^T \alpha^{(t)}} \log \frac{q_i^{(T+1)}\left(\theta_*\right)}{q_i^{(T+1)}\left(\theta_m\right)} \notag\\
				& =\frac{1}{N} \sum_{j=1}^N \lim _{T \rightarrow \infty} \frac{1}{T} \sum_{t=1}^T z_j^{(t)}\left(\theta_*, \theta_m\right) \notag\\
				&\overset{ \eqref{as_sez}}{ =}\mathbb{E}\left[\frac{1}{N} \sum_{j=1}^N \log \frac{f_j\left(y_j \mid x^*(\tilde{\boldsymbol{q}}), \theta_*\right)}{f_j\left(y_j \mid x^*(\tilde{\boldsymbol{q}}), \theta_m\right)}\right]\\
				&=\frac{1}{N} \sum_{j=1}^N D_{K L}\left(f_j\left(y_j \mid x^*(\tilde{\boldsymbol{q}}), \theta_*\right) \| f_j\left(y_j \mid x^*(\tilde{\boldsymbol{q}}), \theta_m\right)\right). \notag
			\end{align}
		
		Finally, we can derive the belief convergence rate based on the properties of beliefs and above result. 
			By recalling Assumption \ref{assum_optimal}, we obtain $Z(\theta_*,\theta_m)\triangleq\frac{1}{N} \sum_{j=1}^N D_{K L}\left(f_j\left(y_j \mid x^*(\tilde{\boldsymbol{q}}), \theta_*\right) \| f_j\left(y_j \mid x^*(\tilde{\boldsymbol{q}}), \theta_m\right)\right)>0$.
			Therefore, \eqref{limit-logq}  indicates that for all $\epsilon>0$, there exists $T'(\epsilon)$ such that for all $T>T'$,
			\begin{align}
				\left|\frac{1}{\sum_{t=1}^T \alpha^{(t)}} \log  \frac{q_i^{(T+1)}\left(\theta_*\right)}{q_i^{(T+1)}\left(\theta_m\right)}-Z_j(\theta_*,\theta_m)\right| \leq \epsilon \quad \text { a.s. }\notag
			\end{align}
			As a result,
			\begin{align}
				\frac{q_i^{(T+1)}\left(\theta_m\right)}{q_i^{(T+1)}\left(\theta_*\right)} \leq \exp \left(-\sum_{t=1}^T \alpha^{(t)}\big( Z(\theta_*,\theta_m)-\epsilon\big)\right) \quad \text { a.s. }\label{rateqo}
			\end{align}
		Using the fact that $\sum_{m=1}^{M}q_i^{T+1}(\theta_m)=1$, we obtain
		\begin{align}
			\tfrac{1}{q_i^{(T+1)}\left(\theta_*\right)} -1\leq \!\!\sum_{\theta_m\neq\theta_*}\!\!\exp \left(-\sum_{t=1}^T \alpha^{(t)}\left(Z(\theta_*,\theta_m)-\epsilon\right)\right) ,~ \text { a.s. }\notag
		\end{align}
		Furthermore, we derive
		\begin{align}
			\tfrac{1}{1+\sum_{\theta_m\neq\theta_*}\exp \left(-\sum_{t=1}^T \alpha^{(t)}\big(Z(\theta_*,\theta_m)-\epsilon\big)\right)}\leq q_i^{(T+1)}\left(\theta_*\right)\leq 1,~ \text{a.s.}
		\end{align}
		Because of $\sum_{t=1}^T \alpha^{(t)}\rightarrow \infty$, then $q_i^{t}(\theta_*)\rightarrow 1$ a.s.
		We then conclude from Theorem \ref{lemma_be_con} that  $\tilde{\boldsymbol{q}}=\boldsymbol{q}^*,$
		where
		in vector $\boldsymbol{q}^*(\boldsymbol{\theta})$ only $q(\theta_*)=1$, while  other $q(\theta_m)|_{\theta_m\neq \theta_*}=0$.
		
		Besides, since in \eqref{rateqo} $\epsilon$ is arbitrary and $q_i^{(T+1)}\left(\theta_*\right)\leq1$, we can obtain that for any $i\in\mathcal{N}$, $\theta_m\neq\theta_*$,
		\begin{align}
		q_i^{(T+1)}\left(\theta_m\right) \leq \exp \left(-Z(\theta_*,\theta_m)\sum_{t=1}^T \alpha^{(t)} \right) \quad \text { a.s. }\notag
		\end{align}
	This completes the assertion of the theorem.
		\end{proof}
		
		\textbf{Remark 4. } Since the stepsize $\alpha^{(t)}$  is of order  $O(\frac{1}{t})$, we conclude $\sum_{t=1}^T \alpha^{(t)} =O(\ln (t))$. As a result, based on Theorem \ref{thm3}, we can obtain that for each agent $i\in\mathcal{N}$ and  $\theta_m\neq\theta_*$, the belief sequence can reach a sublinear convergence rate, i.e. $q_i^{(T+1)}\left(\theta_m\right)=O(1/T)$.

	Overall, recalling that $x^*(\boldsymbol{q}^*)=x_*$, Theorem \ref{thm2} together with Theorem \ref{thm3} implies that the algorithm converges to its true optimal solution $x_*$. We formalize  it
	in the following result.
			\begin{theorem}
	Let Assumptions \ref{assum_boundbelief}-\ref{assum_optimal} hold. Consider Algorithm \ref{alg:CDSA} hold with  the stepsize $\alpha^{(t)}$ of order $\mathcal{O}(\frac{1}{t})$. Then for every agent $i\in\mathcal{N}$,
	\[\lim_{t\rightarrow\infty}x_i^{(t)}=x_* , \quad a.s. \]
\end{theorem}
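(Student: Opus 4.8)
The plan is to obtain the final statement purely by composing Theorem~\ref{thm2} with Theorem~\ref{thm3}; no new estimate is needed, only careful bookkeeping of the almost-sure events on which the two results hold. First I would recall that Theorem~\ref{thm2} already gives, for every agent $i\in\mathcal{N}$, that $x_i^{(t)}\to x^*(\tilde{\boldsymbol{q}})$, and that its proof uses the limiting belief $\tilde{\boldsymbol{q}}$ only through the consensus $\|\boldsymbol{q}_i^{(t)}-\tilde{\boldsymbol{q}}\|\to 0$ supplied by Theorem~\ref{lemma_be_con}; in particular this convergence holds on the full-measure event $\Omega_1$ underlying Theorem~\ref{lemma_be_con}, irrespective of the actual value of the (random) vector $\tilde{\boldsymbol{q}}$.

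Next I would bring in Theorem~\ref{thm3}, which asserts that on a full-measure event $\Omega_2$ one has $q_i^{(t)}(\theta_*)\to 1$ for each $i$, hence, comparing with Theorem~\ref{lemma_be_con}, that the limiting common belief is the degenerate vector $\tilde{\boldsymbol{q}}=\boldsymbol{q}^*$ concentrated at $\theta_*$. On $\Omega_1\cap\Omega_2$ I may therefore simply substitute $\tilde{\boldsymbol{q}}=\boldsymbol{q}^*$ into the conclusion of Theorem~\ref{thm2}, obtaining $x_i^{(t)}\to x^*(\boldsymbol{q}^*)$. It is worth stressing that this is a substitution of an exact almost-sure identity, not a passage to the limit in the argument $\boldsymbol{q}$, so no continuity property of the map $\boldsymbol{q}\mapsto x^*(\boldsymbol{q})$ is invoked.

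Finally I would identify $x^*(\boldsymbol{q}^*)$ with $x_*$: from the definition \eqref{def-xstarq} and the reformulation \eqref{newfunc}, $x^*(\boldsymbol{q}^*)$ minimizes $\tfrac{1}{N}\sum_{i=1}^N F_i(x,\boldsymbol{q}^*)=\tfrac{1}{N}\sum_{i=1}^N \boldsymbol{q}^*(\boldsymbol{\theta})^T\boldsymbol{J}_i(x,\boldsymbol{\theta})=\tfrac{1}{N}\sum_{i=1}^N J_i(x,\theta_*)$, i.e.\ the objective of the original problem \eqref{problem}; under Assumption~\ref{assum_func} this function is strongly convex, so its minimizer is unique and equals $x_*$, whence $x^*(\boldsymbol{q}^*)=x_*$ as already recorded below \eqref{def-xstarq}. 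On $\Omega_1\cap\Omega_2$, which is still of full measure, we then conclude $\lim_{t\to\infty}x_i^{(t)}=x_*$ for every $i\in\mathcal{N}$, which is the claim. The only point requiring any care is the alignment of the two almost-sure events; since all the genuinely hard analysis (the optimization and consensus recursions of Lemma~\ref{lemma_op} and Lemma~\ref{lemma_consens}, the uniform bound of Lemma~\ref{lemma_hatX}, the rate estimate of Lemma~\ref{lemma_rec_e}, and the Toeplitz/strong-law argument behind Theorem~\ref{thm3}) has already been carried out, there is no real obstacle remaining in this step.
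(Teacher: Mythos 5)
Your proposal is correct and follows essentially the same route as the paper, which likewise obtains this result by simply combining Theorem~\ref{thm2} ($x_i^{(t)}\to x^*(\tilde{\boldsymbol{q}})$) with Theorem~\ref{thm3} ($\tilde{\boldsymbol{q}}=\boldsymbol{q}^*$ a.s.) and recalling $x^*(\boldsymbol{q}^*)=x_*$. Your explicit bookkeeping of the two full-measure events is a slightly more careful write-up of the same argument, not a different one.
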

		\section{Experiments}
		In this section, we provide  numerical examples to demonstrate our theoretical analysis. One is the near-sharp quadratic problem,
and the other considers the  scenario    of source  searching.
		\subsection{ Near-sharp Quadratic Problem}
		Consider the following near-sharp quadratic problem:
		\begin{equation}
			\min_{\boldsymbol{x}\in \mathbb{R}^p} \frac{1}{N}\sum_{i=1}^{N}\|\theta_*x-d_i\|^2,\label{QP}
		\end{equation}
		where $d_i=e_i\boldsymbol{1}$ and $e_i$ is the $i$-th smallest eigenvalue of the $W$. Set $\Theta=\{1,2.5,4\}$ and $\theta_*=2.5$. For all agent $i$, the realized date is obtained from (\ref{y_data}), where $\epsilon_i\sim N(0,1)$.
		
		Considering five agents communicate under path topology, we use Algorithm \ref{alg:CDSA} to solve the problem (\ref{QP})  with  the stepsize chosen as $\alpha^{(t)}=\frac{10}{t+80}$. Set the weighted adjacency  matrix by Metropolis-Hastings rules\cite{fastdis_ave}. We show the average beliefs $\bar{q}^{(t)}=\frac{1}{5}\sum_{i=1}^{5}q_i^{(t)}$  of five agents for the three possible parameters in Figure \ref{figure1}, and the gap between each agent's belief $q_i^{(t)}(\theta_*)$ and average belief $\bar{q}^{(t)}(\theta_*)$, i.e. $q_i^{(t)}(\theta_*)-\bar{q}^{(t)}(\theta_*)$ for all $i\in \mathcal{N}$ in Figure \ref{figure2}.
From Figure \ref{figure1}, we can see that the posterior probability of true parameter converge to $1$ and the probability of fake parameter decrease to $0$, which means the average belief sequence generated by our Algorithm converges to the true parameter.  Figure \ref{figure2} shows that the gap between each agent's belief of the true parameter and the average belief is $0$ at the very beginning, which is because we set $q_i^{(0)}=\frac{1}{M}\boldsymbol{1}_M$ for all $i\in\mathcal{N}$ in the algorithm initialization.
As the iteration of the algorithm proceeds, initially each agent has not yet fully communicated with its neighbors to integrate global information, and thus  cannot reach   consensus. Gradually, all agents beliefs get consensus to the true parameter.
%
%
	\begin{figure}
			\centering
		\includegraphics[width=0.8\columnwidth]{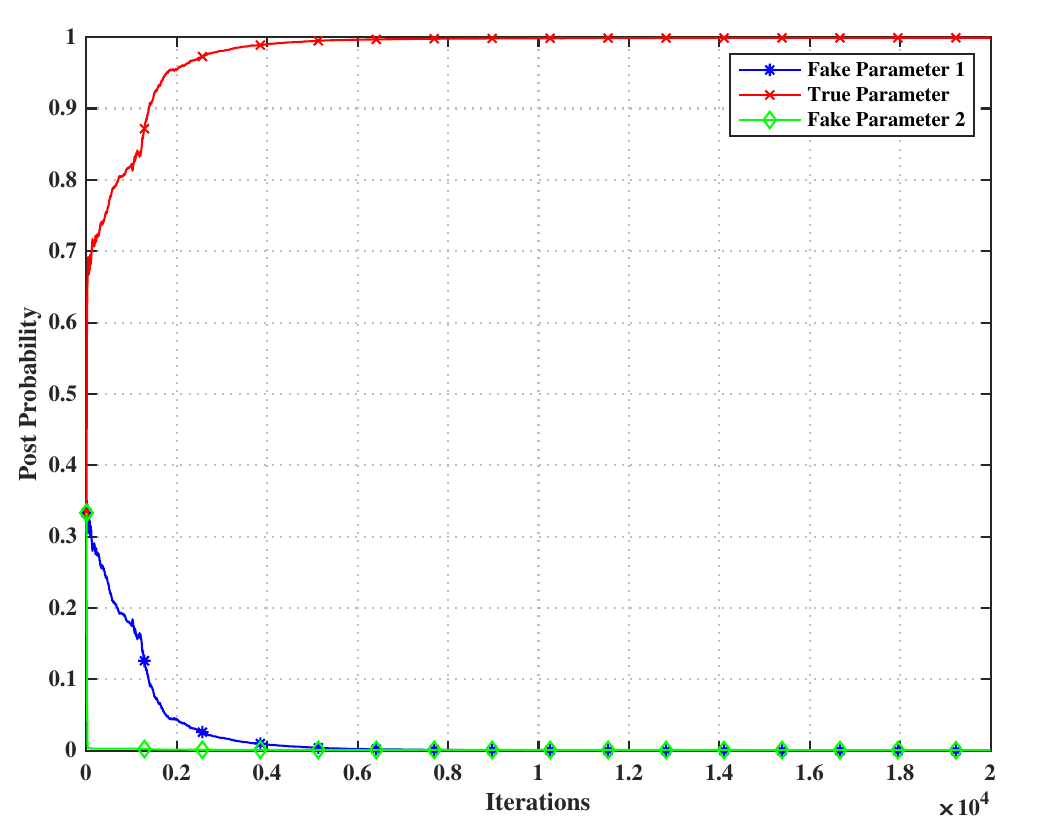}
		\caption{The average belief  of five agents for three candidate  parameters }
		\label{figure1}
	\end{figure}

\begin{figure}
		\centering
	\includegraphics[width=0.8\columnwidth]{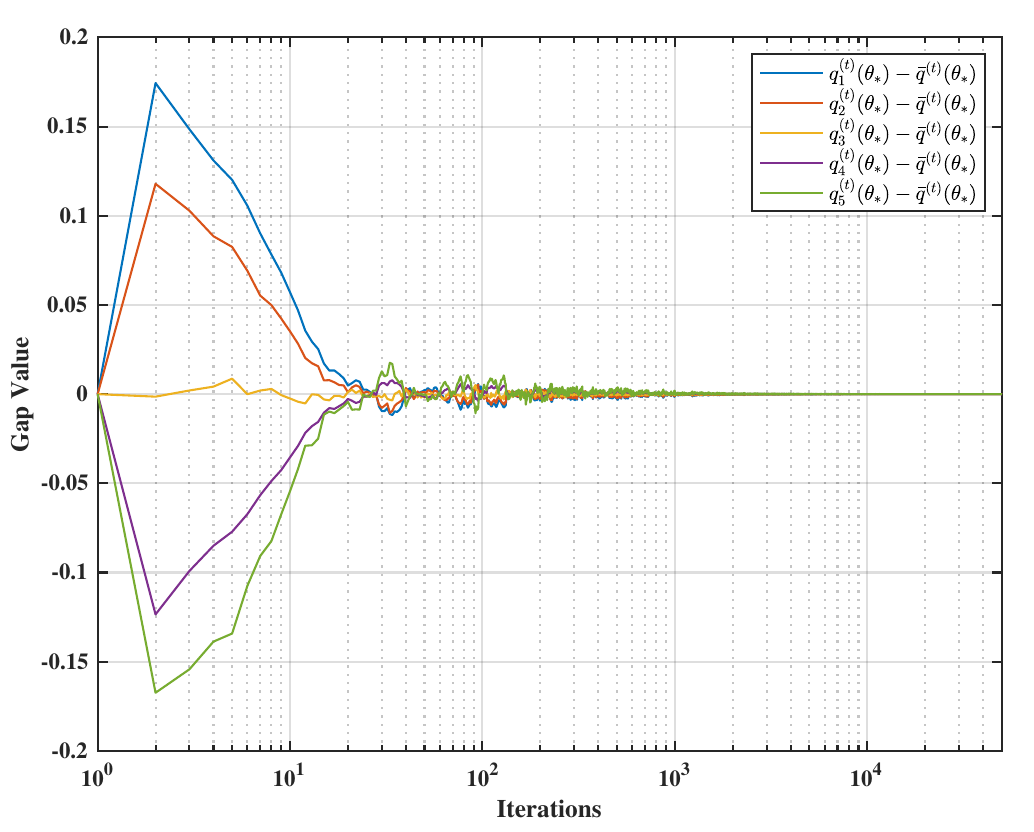}
	\caption{The gap between average belief and each agent's belief of true parameter}
	\label{figure2}
\end{figure}

Furthermore, the adaptive decision sequences of all agents are presented in Figure \ref{figure3}. We can see that five agents' decision reach  consensus to  the   true optimal decision.
		
		\begin{figure}[h]
			\centerline{\includegraphics[width=0.8\columnwidth]{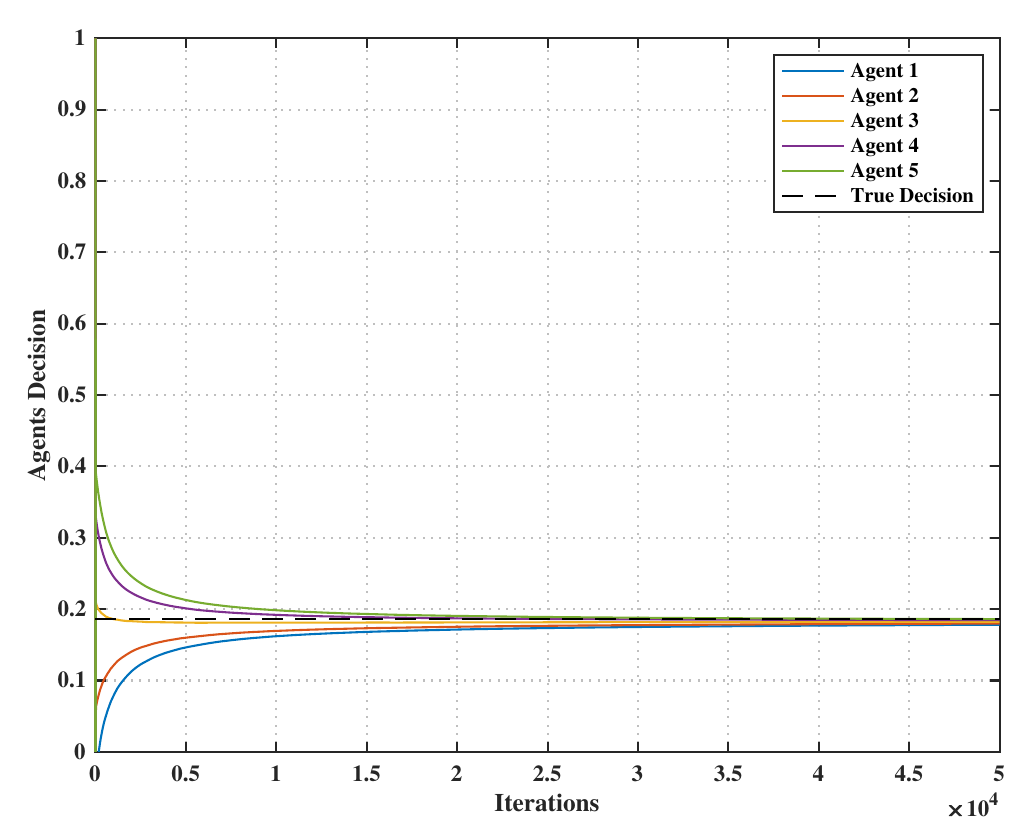}}
			\caption{Decision convergence of all agents under Algorithm \ref{alg:CDSA} }
			\label{figure3}
		\end{figure}
		
		{\bf The impact of stepsizes.}  Besides, we  implement Algorithm \ref{alg:CDSA}  with   different stepsizes to explore their impact on the algorithm convergence. The   beliefs of the true parameter    with stepsizes $\alpha^{(t)}=\frac{1}{t+3}, \frac{1}{t+5}, \frac{10}{t+80}$ are shown  in Figure \ref{alpha_com}. Since $\frac{10}{t+80}>\frac{1}{t+3}>\frac{1}{t+5}$ for any $t\geq6$, and $\sum_{t=1}^{T}\frac{10}{t+80}>\sum_{t=1}^{T}\frac{1}{t+3}>\sum_{t=1}^{T}\frac{1}{t+5}$ for any $T\geq 15$.
Based on  the convergence rate \eqref{belief_rate} of beliefs, we can obtain that as $T\rightarrow\infty$, algorithm implement with stepsize $\frac{10}{t+80}$  converge faster than others. Whereas at the beginning when $T$ is small, due to $\sum_{t=1}^{T}\frac{1}{t+3}>\sum_{t=1}^{T}\frac{10}{t+80}$, algorithm with stepsize $\frac{1}{t+3}$ performs better. The theoretical results match the numerical results in Figure \ref{alpha_com}. Generally speaking, algorithm with bigger stepsize leads to faster convergence  rate  as data information used is much more efficient than prior information due to Equation \eqref{b_up}.

\begin{figure}
	\centering
	\includegraphics[width=0.8\columnwidth]{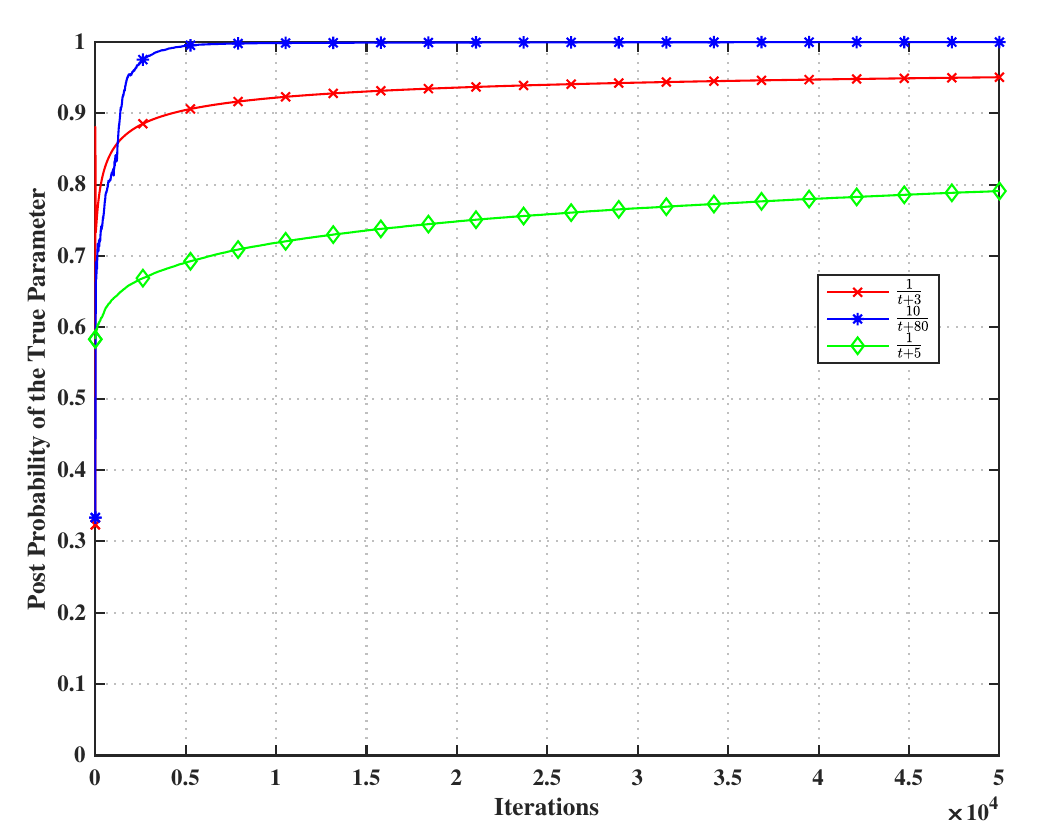}
	\caption{The impact of   stepsizes    on the convergence rate of the true parameter's belief}
	\label{alpha_com}
\end{figure}
%
%

		
		{\bf Different distributed consensus protocol comparison.}
		We further carry out simulations to compare the  classical distributed linear consensus protocol \cite{fastdis_ave} with \eqref{q_up} which implements distributed consensus averaging on a reweighting of the log-belief.  The result  demonstrated in Figure \ref{log_com} shows that the log-belief is faster than linear consensus, which  is
		consistent with the theoretical  discussions  in Remark 2.

\begin{figure}
	\centering
	\includegraphics[width=0.8\columnwidth]{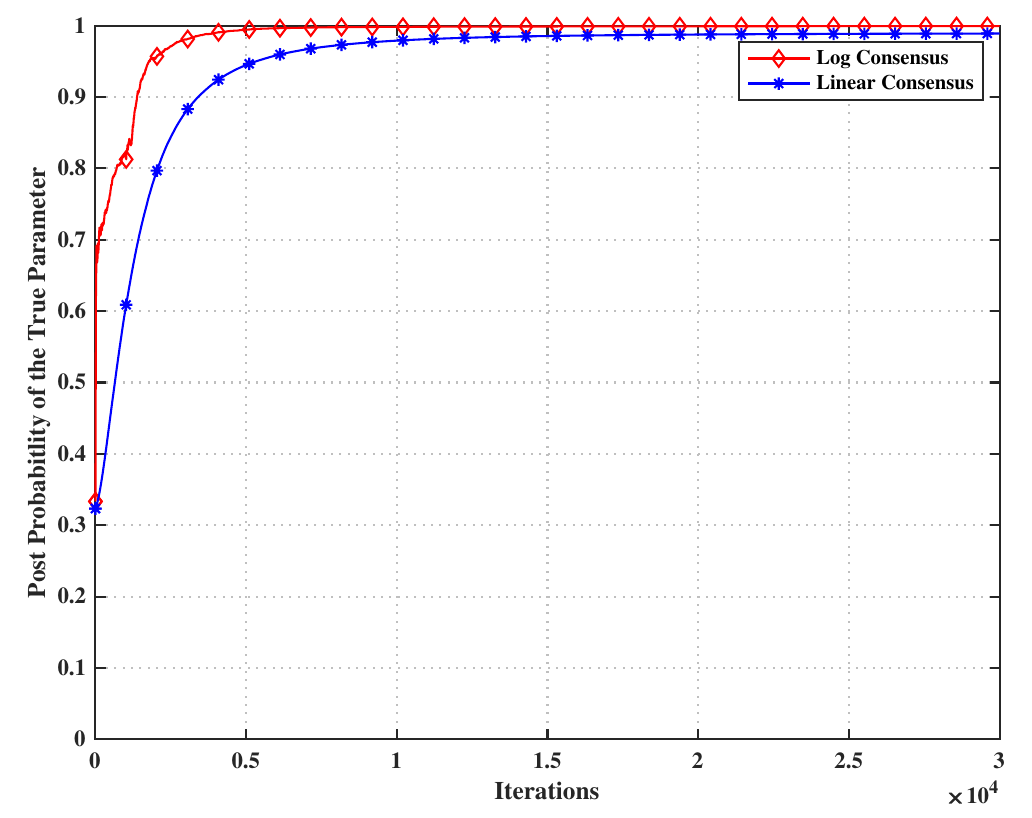}
	\caption{Comparison between   log-belief and  linear consensus }
	\label{log_com}
\end{figure}

	{\bf Different network topology comparison.}
	We demonstrate our algorithm in different distributed communication network and compare the performance under \rm{Erd\"os and R\'enyi} random graph \cite{erdos1960evolution} with different probability in network size of $N=30$. Since high probability of ER graph indicates more connectivity of network, we can obtain that the convergence under higher $p$ of ER graph is faster than that under lower $p$ of ER graph. The result is shown in Figure \ref{fig:topologycom}.
	
	\begin{figure}
		\centering
		\includegraphics[width=0.8\linewidth]{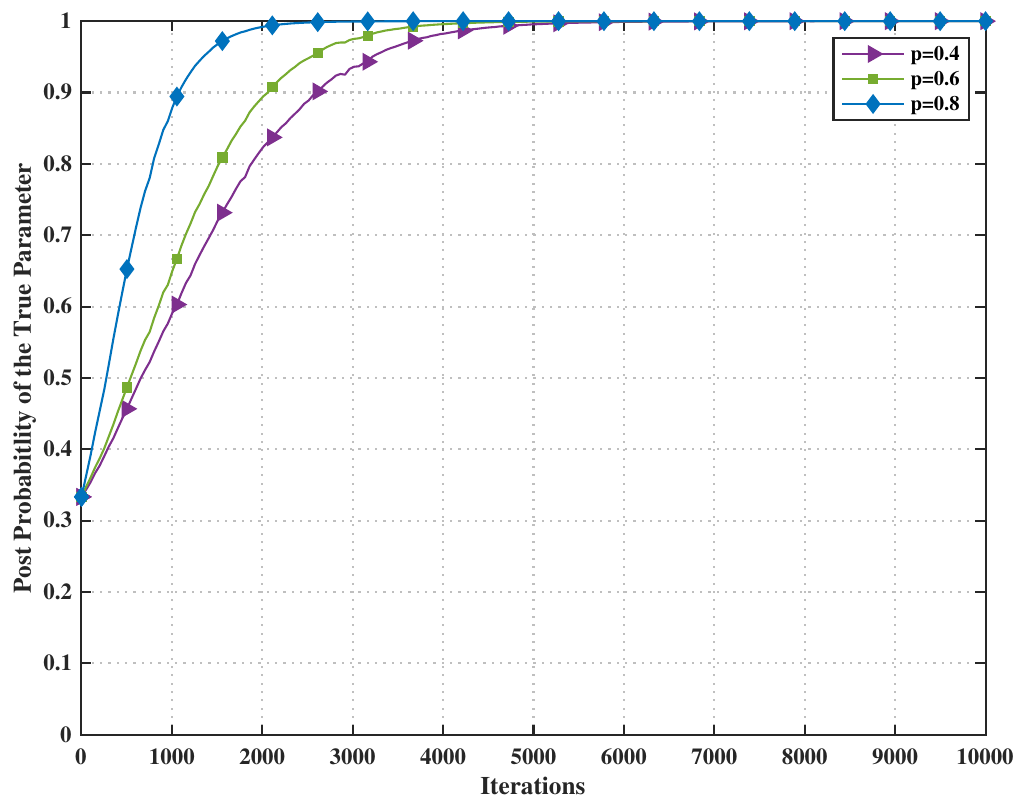}
		\caption{Comparison of Erd\"os-R\'enyi graph with different probability in network size of $N=30$.}
		\label{fig:topologycom}
\end{figure}

		\subsection{Source Seeking}\label{source_pro}
		
		In addition, we conduct  experiments on the  source seeking problems, which are of potential applications in gas leak detection and environmental protection \cite{chen2020combining, hutchinson2017review}.  Here we consider the steady-state plume model in two settings: ideal point source seeking without affect of ground, and point source above ground. 
		
		{\bf Ideal source seeking.}  Classical source seeking problems aim to find the source of atmospheric hazardous material and make the robot reach the source location eventually.  Consider distributed Unmanned Aerial Vehicle Networks (UAVNs) with five devices that are capable of sampling air quality, processing the data, and making decisions of the source localization based on the observations.

		
		Let $x=(x_{(1)},x_{(2)})$ denote  the localization of sampling air and $\theta=(\beta_{(1)},\beta_{(2)}) $  be   the pollution source localization.
		Under stable source strength and static conditions, the pollution source forms a stable field which is the Gauss model of continuous point source diffusion in unbounded space \cite{gotaas1972model} that can be formulated as
		\begin{align}
			c&(x=(x_{(1)},x_{(2)}) ;\theta=(\beta_{(1)},\beta_{(2)}))\notag\\
			& =c_0\exp{\left(-\frac{(x_{(1)}-\beta_{(1)})^2}{2\sigma_1^2}-\frac{(x_{(2)}-\beta_{(2)})^2}{2\sigma_2^2}\right)},\label{pollu}
		\end{align}
		where $c_0$ is the initial constant concentration of pollution source; $\sigma_1$ and $\sigma_2$ denote the lateral diffusion parameter and the longitudinal diffusion parameter, respectively.
		
		Five UAVs try to find the source by optimizing the aggregation function collaboratively
		\begin{equation}
			\min_{x}\frac{1}{5}\sum_{i=1}^{5}-c_i(x;\theta_*). \label{sp}
		\end{equation}
	where $c_i$ is defined in \eqref{pollu} with different $x_i$, which is the localization of different agents. The initial localizations  of five UAVs  are $(-2,0), (-0.5,3), (2,4), (4,-1),(1,-3)$, respectively. Three possible pollution source is $\theta_1=(0,0), \theta_2=(4,3), \theta_3=(2,-2)$, where $\theta_*=\theta_1$. Set $c_0=100, \sigma_1=\sigma_2=2$, and $\epsilon_i\sim N(0,1)$.
	
	The five UAVs use Algorithms \ref{alg:CDSA} to identify the true location of the target and adaptively move towards the center of the pollution source by detecting the concentration value at their current location. The  motion trajectories of the five  participants  are shown in Figure \ref{fig1}. It  can be  observed that all sensing and actuation devices first achieve consensus and then cooperatively locate the real pollution source. The experimental results align with theoretical analysis, indicating a faster convergence speed for consensus compared to the optimization convergence speed \cite[Remark 3]{mywork1}.
		\begin{figure}[h]
			\centerline{\includegraphics[width=0.9 \columnwidth]{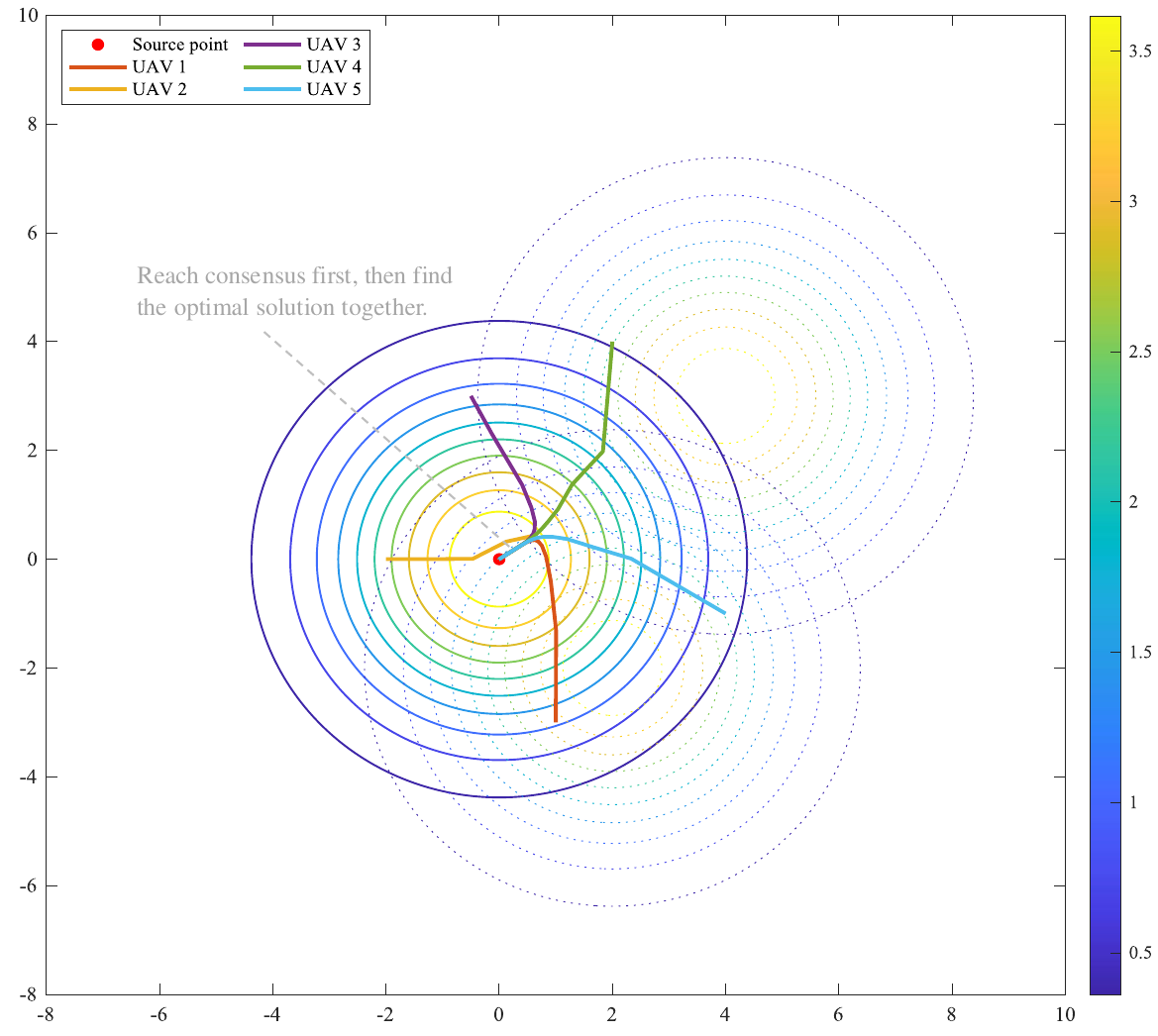}}
			\caption{Motion trajectories  of agents in ideal source seeking problem}
			\label{fig1}
		\end{figure}

 {\bf Point source above ground.} The influence of the ground surface on concentration distributions is incorporated by enforcing a zero-material-flux boundary condition at the terrain interface\cite{venkatram2003gaussian}. In this scenario, the concentration becomes  
	\begin{align}
		c(x=&(y,z); \theta=h)=\frac{Q}{2 \pi \sigma_{y} \sigma_{z} U} \exp \left[-\frac{y^2}{2 \sigma_{y}^2}\right]\notag\\
		&\times\left\{\exp \left[-\frac{\left(z-h\right)^2}{2 \sigma_{z}^2}\right]+\exp \left[-\frac{\left(z+h\right)^2}{2 \sigma_{z}^2}\right]\right\}, \label{h_s}
	\end{align}
 
where $Q$ is the source strength;  $U$ represents the time-averaged  wind speed at source height; $\sigma_y$ and $\sigma_z$ denote diffusion parameters in $y$ and $z$ directions and $h$ is  the height of the source above ground. Figure \ref{Schematic} is the schematic diagram 

	\begin{figure}[h]
	\centerline{\includegraphics[width=0.9 \columnwidth]{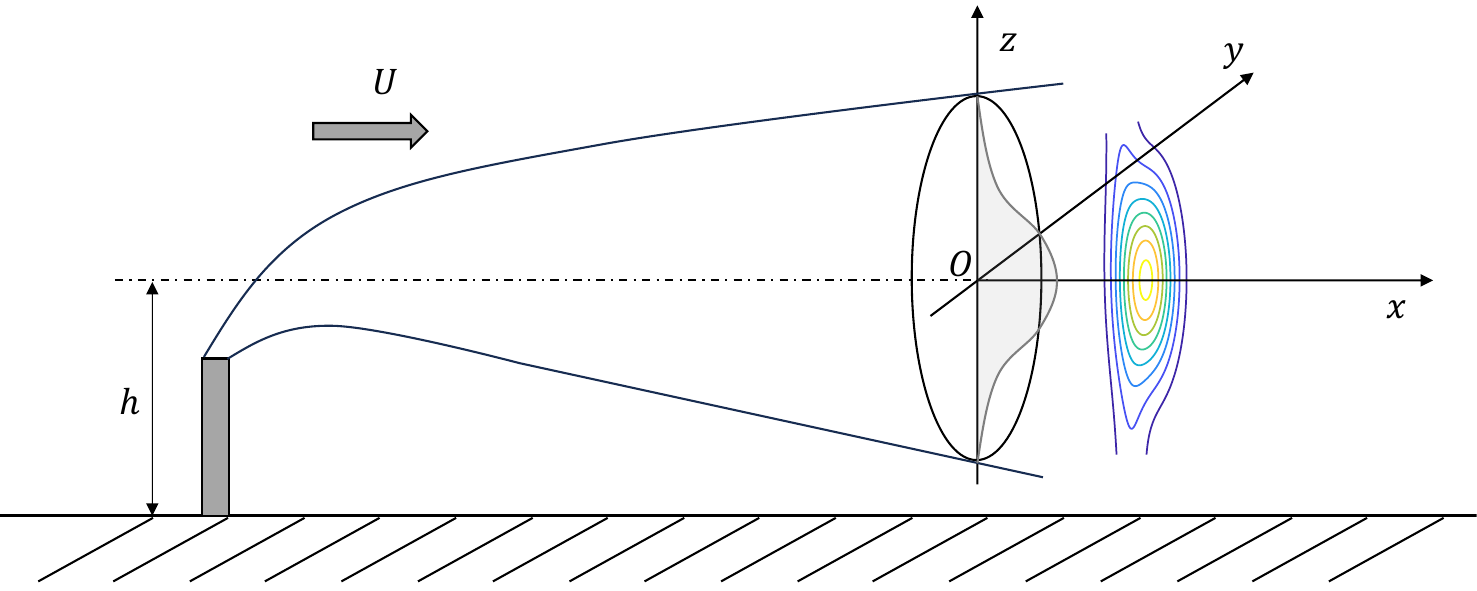}}
	\caption{Schematic diagram of air pollution model by point source above ground}
	\label{Schematic}
\end{figure}

The same as before, five UAVs try to find the unknown parameter source height $h$ by optimizing the aggregation function (\ref{sp}) with $c_i$ defined in (\ref{h_s}) under different $x_i$. Here $x_i=(y_i,z_i)$ is the projection of the UAVs position coordinates in the $yOz$ plane. The initial localizations  of five UAVs  are $(-5,2), (-2,10), (0,8), (3,5),(3.5,0)$ and the possible source height is $\theta_1=3$, $\theta_2=0.5$, $\theta_3=6$. Set $Q=500, \sigma_{y}=\sigma_{z}=2,$ and $U=3$. 

In this scenario, the goal of UAVs is arriving at consensus height which should be the source height. We conduct Algorithm \ref{alg:CDSA} to solve this problem and it really achieve the source height seeking goal as shown in Figure \ref{h_consentrate}. 

	\begin{figure}[h]
	\centerline{\includegraphics[width=0.9 \columnwidth]{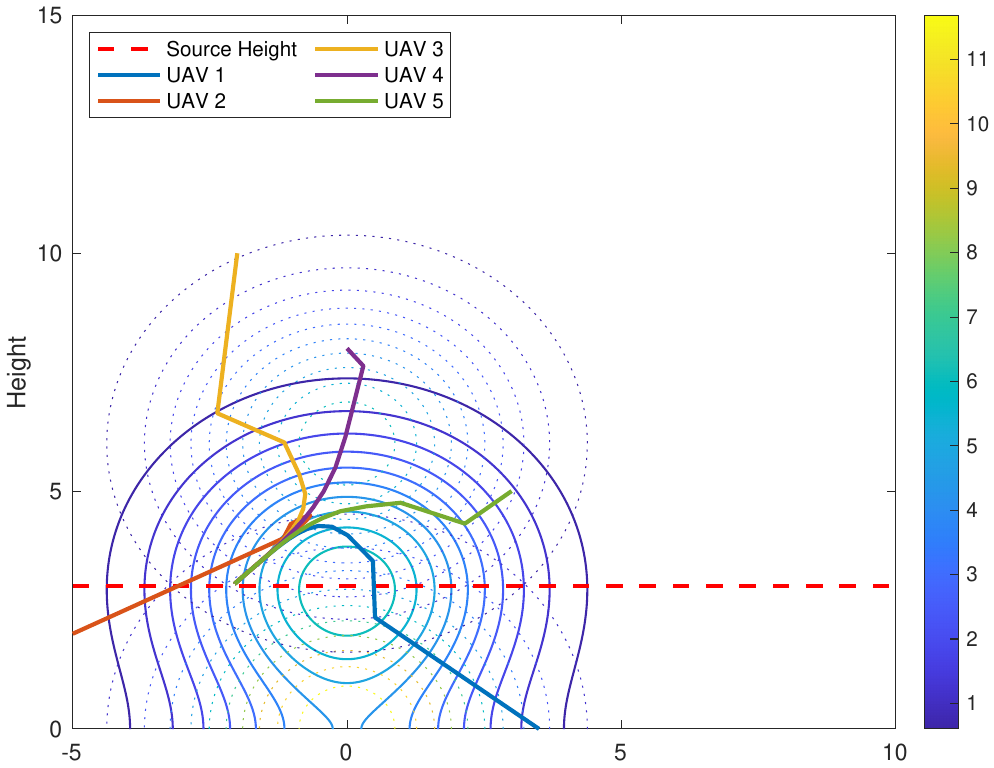}}
	\caption{Motion trajectories  of agents in point source seeking problem above ground}
	\label{h_consentrate}
\end{figure}


		\section{Conclusion}
		This work  has provided valuable insights for addressing  parametric  uncertainty in distributed optimization problems and simultaneously  finding  the optimal solution.
	To be specific, we have designed a novel distributed  fractional Bayesian  learning algorithm to resolve the  bidirectional coupled problem.  We then   prove that agents'  beliefs about the unknown  parameter converge to a common belief,  and that the decision variables also converge to the optimal solution almost surely.  It is worth noting from the  numerical  experiments that  by utilizing the   consensus protocol which  averages on a reweighting of the log-belief,  we have
	attained faster than normal distributed linear consensus protocol.   In future, we will further investigate the bidirectional  coupled  distributed optimization problems with continuous unknown model parameters.  In addition,  it is of interests to  consider the communication-efficient and other  distributed optimization methods  to such problems.
		
		\section*{References}
		\bibliographystyle{unsrt}
		\bibliography{references}
 \appendices

	\section{Proof of Lemma \ref{lemma_hatX}}
	\label{appe_hatX}
		\begin{proof}
		For any $t\geq0$, in order to bound $	\|\boldsymbol{x}^{(t)}-\boldsymbol{1}x^*(\tilde{\boldsymbol{q}})\|^2$, we firstly consider bounding $\|x_i^{(t)}-\alpha^{(t)}\nabla_xF_i(x_i^{(t)},\boldsymbol{q}_i^{(t)})-x^*(\tilde{\boldsymbol{q}})\|^2$ for all $i\in\mathcal{N}$.				\begin{align}
			&\|x_i^{(t)}-\alpha^{(t)}\nabla_xF_i(x_i^{(t)},\boldsymbol{q}_i^{(t)})-x^*(\tilde{\boldsymbol{q}})\|^2\notag\\
			&=\|x_i^{(t)}-x^*(\tilde{\boldsymbol{q}})\|^2+[\alpha^{(t)}]^2\|\nabla_xF_i(x_i^{(t)},\boldsymbol{q}_i^{(t)})\|^2\notag\\
			&~-2\alpha^{(t)}\nabla_xF_i(x_i^{(t)},\boldsymbol{q}_i^{(t)})^T(x_i^{(t)}-x^*(\tilde{\boldsymbol{q}}))\notag\\
			&\leq\|x_i^{(t)}-x^*(\tilde{\boldsymbol{q}})\|^2+[\alpha^{(t)}]^2\|\nabla_xF_i(x_i^{(t)},\boldsymbol{q}_i^{(t)})\|^2\notag\\
			&~-\!2\alpha^{(t)}\!\big(\nabla_xF_i(x_i^{(t)},\boldsymbol{q}_i^{(t)})\!-\!\nabla_xF_i(x^*(\tilde{\boldsymbol{q}}),\boldsymbol{q}_i^{(t)})\big)^T\!(x_i^{(t)}\!-x^*(\tilde{\boldsymbol{q}}))\notag\\
			&~+2\alpha^{(t)}\|\nabla_xF_i(x^*(\tilde{\boldsymbol{q}}),\boldsymbol{q}_i^{(t)})\|\|x_i^{(t)}-x^*(\tilde{\boldsymbol{q}})\|\notag\\
			&\leq(1-2\alpha^{(t)}\mu)\|x_i^{(t)}-x^*(\tilde{\boldsymbol{q}})\|^2+[\alpha^{(t)}]^2\|\nabla_xF_i(x_i^{(t)},\boldsymbol{q}_i^{(t)})\|^2\notag\\
			&\quad+2\alpha^{(t)}\|\nabla_xF_i(x^*(\tilde{\boldsymbol{q}}),\boldsymbol{q}_i^{(t)})\|\|x_i^{(t)}-x^*(\tilde{\boldsymbol{q}})\|\label{boundx_x}
		\end{align}
		where the last inequality follows by the strong convexity of $F_i(x,\boldsymbol{q})$ with $x$ in Lemma  \ref{lemma_allproperty}.
		
		Then similarly to the derivation of  \eqref{MJ} and \eqref{vec_F_bound}, we have
		\begin{align}
			\|\nabla_x&F_i(x_i^{(t)},\boldsymbol{q}_i^{(t)})\|^2\leq\notag\\
			&2M \|\nabla_x \boldsymbol{J}_i(x^*(\tilde{\boldsymbol{q}}), \boldsymbol{\theta})\|^2+2M^2L^2\|x_i^{(t)}-x^*(\tilde{\boldsymbol{q}})\|^2 \label{b1}
		\end{align}
		and
		\begin{align}
			\|\nabla_xF_i(x^*(\tilde{\boldsymbol{q}}),\boldsymbol{q}_i^{(t)})\|^2\leq M \|\nabla_x \boldsymbol{J}_i(x^*(\tilde{\boldsymbol{q}}), \boldsymbol{\theta})\|^2\label{b2}.
		\end{align}
		Substituting \eqref{b1} and \eqref{b2} into \eqref{boundx_x}, we can obtain
		\begin{align}
			\|x_i^{(t)}-&\alpha^{(t)}\nabla_xF_i(x_i^{(t)},\boldsymbol{q}_i^{(t)})-x^*(\tilde{\boldsymbol{q}})\|^2\notag\\
			&\leq(1-2\alpha^{(t)}\mu+2M^2L^2[\alpha^{(t)}]^2)\|x_i^{(t)}-x^*(\tilde{\boldsymbol{q}})\|^2\notag\\
			&~+2\alpha^{(t)}\|\nabla_x \boldsymbol{J}_i(x^*(\tilde{\boldsymbol{q}}), \boldsymbol{\theta})\|\sqrt{M}\|x_i^{(t)}-x^*(\tilde{\boldsymbol{q}})\|\notag\\
			&~+2M[\alpha^{(t)}]^2 \|\nabla_x \boldsymbol{J}_i(x^*(\tilde{\boldsymbol{q}}), \boldsymbol{\theta})\|^2.\notag
		\end{align}
		
		Since $\{\alpha^{(t)}\}_{t\geq 0}$ is a decreasing stepsize to zero, then there exists a constant $T>0$ such that for all $t\geq T$, $\alpha^{(t)}\leq\frac{\mu}{2M^2L^2}$.
		Hence, for any  $t\geq T$,
		\begin{align}
			\|x_i^{(t)}-&\alpha^{(t)}\nabla_xF_i(x_i^{(t)},\boldsymbol{q}_i^{(t)})-x^*(\tilde{\boldsymbol{q}})\|^2\notag\\
			&\leq(1-\alpha^{(t)}\mu)\|x_i^{(t)}-x^*(\tilde{\boldsymbol{q}})\|^2\notag\\
			&~+2\alpha^{(t)}\|\nabla_x \boldsymbol{J}_i(x^*(\tilde{\boldsymbol{q}}), \boldsymbol{\theta})\|\sqrt{M}\|x_i^{(t)}-x^*(\tilde{\boldsymbol{q}})\|\notag\\
			&~+\alpha^{(t)}\frac{\mu}{ML^2} \|\nabla_x \boldsymbol{J}_i(x^*(\tilde{\boldsymbol{q}}), \boldsymbol{\theta})\|^2\notag\\
			&\leq \|x_i^{(t)}-x^*(\tilde{\boldsymbol{q}})\|^2-\alpha^{(t)}\Big[\mu\|x_i^{(t)}-x^*(\tilde{\boldsymbol{q}})\|^2 \notag\\
			&~-2\sqrt{M}\|\nabla_x \boldsymbol{J}_i(x^*(\tilde{\boldsymbol{q}}), \boldsymbol{\theta})\|\|x_i^{(t)}-x^*(\tilde{\boldsymbol{q}})\|\notag\\
			&~-\frac{\mu}{ML^2} \|\nabla_x \boldsymbol{J}_i(x^*(\tilde{\boldsymbol{q}}), \boldsymbol{\theta})\|^2\Big]\label{xxx}.
		\end{align}
		Let us define 
		\begin{align}
			\mathcal{X}_i\triangleq\{p\geq0: \mu p^2&-2\sqrt{M}\|\nabla_x \boldsymbol{J}_i(x^*(\tilde{\boldsymbol{q}}), \boldsymbol{\theta})\|p\notag\\
			&-\frac{\mu}{ML^2} \|\nabla_x \boldsymbol{J}_i(x^*(\tilde{\boldsymbol{q}}), \boldsymbol{\theta})\|^2\leq0\}, \label{setX}
		\end{align}
		which is non-empty and compact. If $\|x_i^{(t)}-x^*(\tilde{\boldsymbol{q}})\|\notin \mathcal{X}$, we conclude from \eqref{xxx} that
		\begin{align}
			\|x_i^{(t)}-\alpha^{(t)}\nabla_xF_i(x_i^{(t)},\boldsymbol{q}_i^{(t)})-x^*(\tilde{\boldsymbol{q}})\|^2\leq \|x_i^{(t)}-x^*(\tilde{\boldsymbol{q}})\|^2.\label{bound1}
		\end{align}
		Otherwise,
		\begin{align}
			\|x_i^{(t)}&-\alpha^{(t)}\nabla_xF_i(x_i^{(t)},\boldsymbol{q}_i^{(t)})-x^*(\tilde{\boldsymbol{q}})\|^2\leq\notag\\
			&\max_{p\in\mathcal{X}_i}\Big\{p^2-\frac{\mu}{2M^2L^2}\big[\mu p^2-2\sqrt{M}\|\nabla_x \boldsymbol{J}_i(x^*(\tilde{\boldsymbol{q}}), \boldsymbol{\theta})\|p\notag\\
			&-\frac{\mu}{ML^2} \|\nabla_x \boldsymbol{J}_i(x^*(\tilde{\boldsymbol{q}}), \boldsymbol{\theta})\|^2\big]\Big\}\notag\\
			&=\max_{p\in\mathcal{X}_i}\Big\{(1-\tfrac{\mu^2}{2ML^2})p^2+\frac{\mu\|\nabla_x \boldsymbol{J}_i(x^*(\tilde{\boldsymbol{q}}), \boldsymbol{\theta})\|}{M^{1.5}L^2}p\notag\\
			&\qquad+\frac{\mu^2\|\nabla_x \boldsymbol{J}_i(x^*(\tilde{\boldsymbol{q}}), \boldsymbol{\theta})\|^2}{2M^3L^4}\Big\}. \label{bound2}
		\end{align}
		From the definition of $\mathcal{X}_i$ , the right zero point of the upward opening parabola in \eqref{setX} is
		\begin{align}
			&p_i^{(r)}=\frac{1}{2\mu}\Bigg(2\sqrt{M}\|\nabla_x \boldsymbol{J}_i(x^*(\tilde{\boldsymbol{q}}), \boldsymbol{\theta})\|\notag\\
			&+\sqrt{4M\|\nabla_x \boldsymbol{J}_i(x^*(\tilde{\boldsymbol{q}}), \boldsymbol{\theta})\|^2+\frac{4\mu^2}{ML^2} \|\nabla_x \boldsymbol{J}_i(x^*(\tilde{\boldsymbol{q}}), \boldsymbol{\theta})\|^2}\Bigg)\notag\\
			&=\left(\frac{\sqrt{M}}{\mu}+\frac{2}{L}\sqrt{\frac{M^2L^2+1}{M}}\right)\|\nabla_x \boldsymbol{J}_i(x^*(\tilde{\boldsymbol{q}}), \boldsymbol{\theta})\|,
		\end{align}
		which means $\mathcal{X}_i=[0, p_i^{(r)}]$. Since the values of quadratic function is bounded in a bounded closed set, we define
		\begin{align}
			&\max_{p\in\mathcal{X}_i}\Big\{(1- \tfrac{\mu^2}{ML^2})p^2+\frac{\mu\|\nabla_x \boldsymbol{J}_i(x^*(\tilde{\boldsymbol{q}}), \boldsymbol{\theta})\|}{M^{1.5}L^2}p\notag\\
			&\qquad+\frac{\mu^2\|\nabla_x \boldsymbol{J}_i(x^*(\tilde{\boldsymbol{q}}), \boldsymbol{\theta})\|^2}{2M^3L^4}\Big\}\triangleq R_i.\label{bound2_2}
		\end{align}
		Combining \eqref{bound1} and \eqref{bound2}, together with \eqref{bound2_2}, we have
		\begin{align}\label{bd-xi}
			\|x_i^{(t)}-&\alpha^{(t)}\nabla_xF_i(x_i^{(t)},\boldsymbol{q}_i^{(t)})-x^*(\tilde{\boldsymbol{q}})\|^2\notag\\
			&\leq \max\{\|x_i^{(t)}-x^*(\tilde{\boldsymbol{q}})\|^2, R_i\},  \quad \forall t\geq T.
		\end{align}
		Recalling from the definition of $\boldsymbol{x}^{(t)}$ and $\mathbb{F}(\boldsymbol{x}^{(t)},\boldsymbol{Q}^{(t)})$ in \eqref{mathbbx} and \eqref{mathbbF} respectively,  in light of relation \eqref{newx_iter} we have
		\begin{align*}
			\|\boldsymbol{x}^{(t+1)}&-\boldsymbol{1}x^*(\tilde{\boldsymbol{q}})\|^2\notag
			\\&=\|W\|^2\|\boldsymbol{x}^{(t)}-\alpha^{(t)}\mathbb{F}(\boldsymbol{x}^{(t)},\boldsymbol{Q}^{(t)})-\boldsymbol{1}x^*(\tilde{\boldsymbol{q}})\|^2\notag\\
			&\leq \|\boldsymbol{x}^{(t)}-\alpha^{(t)}\mathbb{F}(\boldsymbol{x}^{(t)},\boldsymbol{Q}^{(t)})-\boldsymbol{1}x^*(\tilde{\boldsymbol{q}})\|^2
			\\& \overset{\eqref{bd-xi}}{\leq}  \max\{\|\boldsymbol{x}^{(t)}-\boldsymbol{1}x^*(\tilde{\boldsymbol{q}})\|^2, \sum_{i=1}^{N}R_i\},  \quad \forall t\geq T,
		\end{align*}
		where the first inequality holds by the $2$-norm of $W$ is $1$ from Assumption \ref{assum_graph}.
		As a result,
		\begin{align} \label{bd-xT} \|\boldsymbol{x}^{(t)}-\!\boldsymbol{1}x^*(\tilde{\boldsymbol{q}})\|^2\!\leq\!\max\Big\{\|\boldsymbol{x}^{(T)}-\boldsymbol{1}x^*(\tilde{\boldsymbol{q}})\|^2, \sum_{i=1}^{N} R_i\Big\}.
		\end{align}

 Note that $\boldsymbol{x}^{(t+1)}=W\left(\boldsymbol{x}^{(t)}-\alpha^{(t)} \nabla_x \mathbb{F}\left(\boldsymbol{x}^{(t)}, \boldsymbol{Q}^{(t)}\right)\right)$ from \eqref{newx_iter} based on the vector form of $\boldsymbol{x}$ and $\mathbb{F}$ in \eqref{mathbbx} and \eqref{mathbbF}. Since  each belief value $\boldsymbol{q}_i^{(t)}(\boldsymbol{\theta})$  is bounded by $1$ for all $t\geq0$ and $i\in\mathcal{N}$, $\boldsymbol{Q}^{(t)}$  defined in  \eqref{big_Q} is bounded. This  together with  the
continuity of $\nabla_x\mathbb{F}$    under Assumption \ref{assum_func},
we conclude that for a fixed constant $T$, $\boldsymbol{x}^{(T)}$ is bounded. This together with \eqref{bd-xT}
proves the lemma.
	\end{proof}

\section{Proof of Lemma \ref{lemma_rec_e}}
\label{appe_rec_e}
\vspace{-5pt}
\begin{small}
	\begin{proof}
	According to the recursion  \eqref{rec-et}, we have
	\begin{align}
		e^{(t+1)}
		&\leq {\delta}^{t+1} e^{(0)}+ c\sum_{\tau=0}^t {\delta}^{t-\tau} [\alpha^{(\tau)}]^2\notag.\end{align}
	Since $\alpha^{(t)}$ is of order $\mathcal{O}(\frac{1}{t})$, without loss of generality we set $\alpha^{(t)}=\frac{\gamma}{t+T}$ with a constant $\gamma>0$ and $T>0$.
	Dividing both side of above inequality by  $[\alpha^{(t)}]^2$, we have
	\begin{align}
		{ e^{(t+1)} \over [\alpha^{(t)}]^2}
		&\leq { {\delta}^{t+1} \over [\alpha^{(t)}]^2} e^{(0)}+ c\sum_{\tau=0}^t {\delta}^{t-\tau} \left[{\alpha^{(\tau)} \over \alpha^{(t)}}\right]^2\notag\\
		&=\underbrace{\frac{e^{(0)}}{\gamma^2}\cdot\frac{\delta^{t+1}}{\frac{1}{(t+T)^2}}}_{Term~4}+ \underbrace{c\sum_{\tau=0}^t\delta^{t-\tau}\left(\frac{t+T}{\tau+T}\right)^2}_{Term~5}\label{term45}.
	\end{align}
\vspace{-5pt}
	As for $Term~4$, since $\frac{1}{\delta}>1$ by $\delta\in(0,1)$,   we can obtain that
	\begin{equation}
		\lim_{t \rightarrow \infty}\frac{\delta^{t+1}}{\frac{1}{(t+T)^2}}=\lim_{t \rightarrow \infty}\frac{(t+T)^2 }{(\frac{1}{\delta})^{t}}=0.\label{Term4}
	\end{equation}
	As for $Term~5$, we have
	\begin{align}
		Term~5&=c\sum_{\tau=0}^t\delta^{t-\tau}\left(1+\frac{t-\tau}{\tau+T}\right)^2\notag\\
		&=c\sum_{\tau=0}^t\delta^{t-\tau}\left(1+\frac{2(t-\tau)}{\tau+T}+\frac{(t-\tau)^2}{(\tau+T)^2}\right)\notag\\
		&\leq c\sum_{\tau=0}^t\delta^{\tau} +\frac{2c}{T}\sum_{\tau=1}^t \tau \delta^{\tau} + \frac{c}{T^2}\sum_{\tau=1}^t \tau^2 \delta^{\tau}.\label{term5}
	\end{align}
	Since $\delta\in(0,1)$, we derive
	\begin{align}
		&\lim_{t \rightarrow \infty} \sum_{\tau=0}^t\delta^{\tau}=\frac{1}{1-\delta},\label{lim1}\\
		&\lim_{t \rightarrow \infty} \sum_{\tau=1}^t \tau \delta^{\tau}=\delta	\sum_{\tau=1}^{\infty} \tau \delta^{\tau-1}= \delta\left( \sum_{\tau=0}^{\infty}\delta^{\tau}\right)^{\prime}=\frac{\delta}{(1-\delta)^2}. \label{lim2}
	\end{align}
	Moreover, due to
	\begin{align}
		\left(\sum_{\tau=1}^t\delta^{\tau}\right)^{\prime\prime}=\sum_{\tau=2}^{t}\tau(\tau-1)\delta^{\tau-2}=\sum_{\tau=2}^t \tau^2\delta^{\tau-2}-\sum_{\tau=2}^t \tau \delta^{\tau-2}\notag,
	\end{align}
	we can obtain that
	\begin{align}
		\sum_{\tau=1}^t\tau^2 \delta^{\tau}= \delta^2\left[	\left(\sum_{\tau=1}^t\delta^{\tau}\right)^{\prime\prime}+\frac{1}{\delta}\sum_{\tau=1}^t \tau \delta^{\tau-1}-1\right]+\delta,\notag
	\end{align}
	and therefore
	\begin{align}
		\lim_{t \rightarrow \infty} \sum_{\tau=1}^t \tau^2 \delta^{\tau}&= \delta^2\left[\frac{2}{(1-\delta)^3}+\frac{1}{\delta(1-\delta)^2}-1\right]+\delta\notag\\
		&=\frac{\delta(1+\delta)}{(1-\delta)^2}+\delta(1-\delta)\label{lim3}.
	\end{align}
	Substituting \eqref{lim1}, \eqref{lim2}, and \eqref{lim3} into \eqref{term5}, we can get the upper bound of $Term~5$. Together with \eqref{Term4} of $Term~4$ and recalling \eqref{term45},  we acheive
	\begin{align}
		0\leq\lim_{t \rightarrow \infty}\frac{e^{(t+1)}}{ [\alpha^{(t)}]^2}\leq c\left[1+\frac{\delta^2(\delta+3)}{T^2(1-\delta)^2}+\frac{2\delta}{T^2(1-\delta)}\right].\notag
	\end{align}
	Thus, $e^{(t+1)}=\mathcal{O}([\alpha^{(t)}]^2)=\mathcal{O}(\frac{1}{t^2})$, which yields the conclusion.
\end{proof}
\end{small}
\vspace{-20pt}
\begin{IEEEbiography}[{\includegraphics[width=1in,height=1.2in,clip,keepaspectratio]{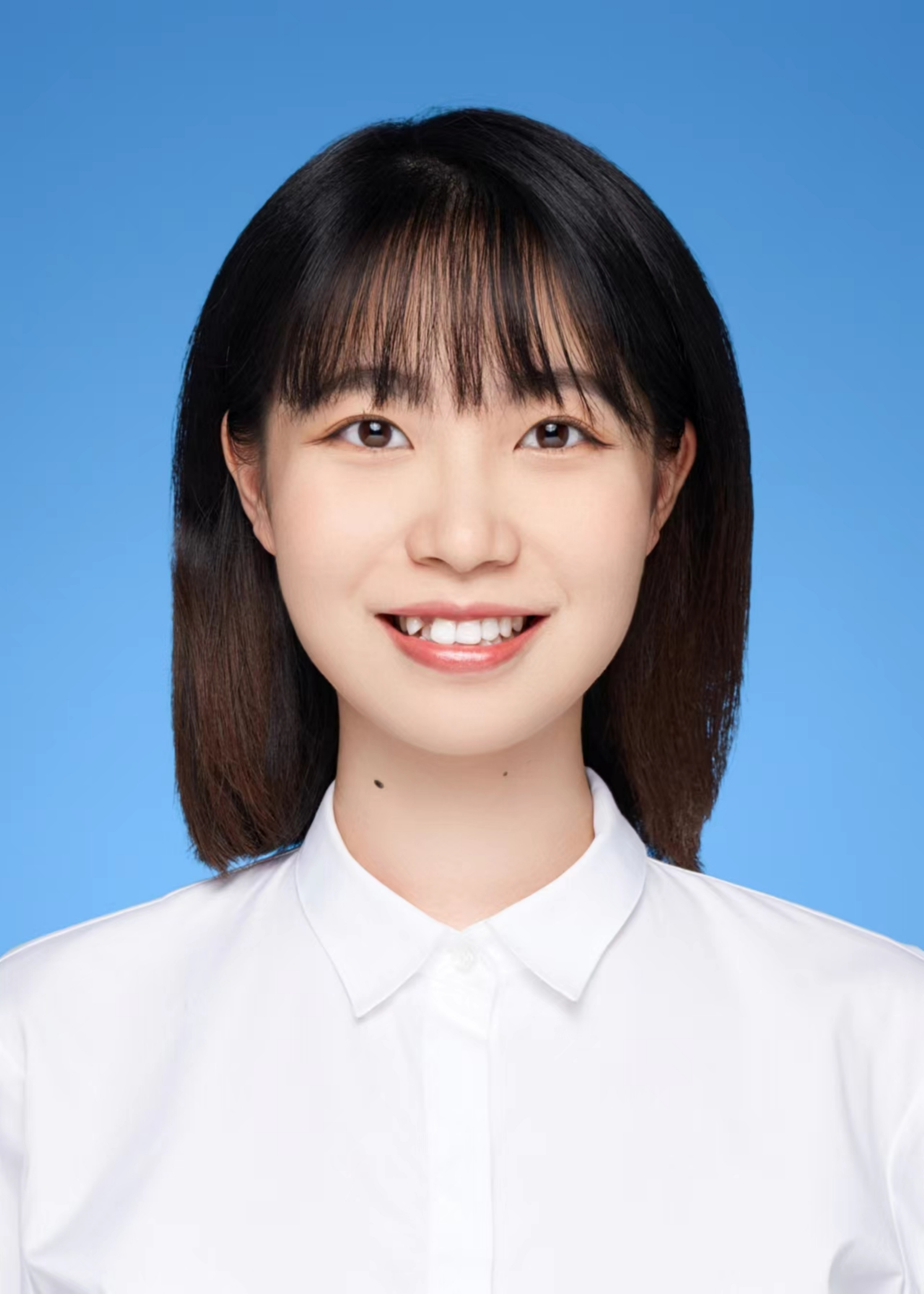}}]{Yaqun Yang} \begin{footnotesize}
 received the B.S. degree in China university of mining and technology, Beijing, China, in 2021. She is currently pursuing the Ph.D. degree with Tongji University, Shanghai, China.
	Her current research interests include distributed optimization with uncertainty and stochastic approximation.
\end{footnotesize}
\end{IEEEbiography}

	\vspace{-36pt}
	\begin{IEEEbiography} [{\includegraphics[width=1in,height=1.2in,clip,keepaspectratio]{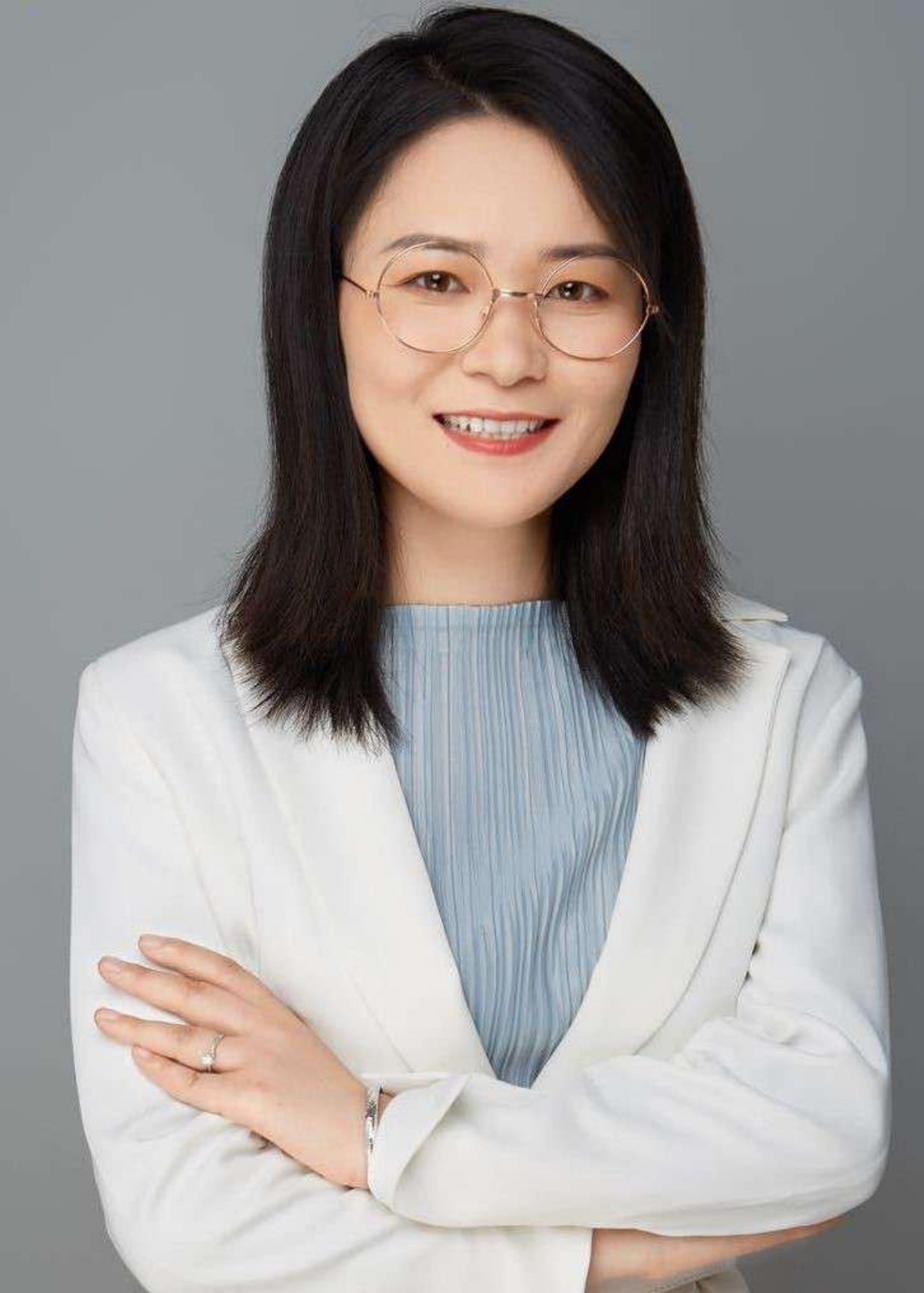}}]{Jinlong Lei} \begin{footnotesize}	(Member, IEEE) received the B.S. degree in automation from the University of Science and Technology of China, Hefei, China, in 2011, and the Ph.D. degree in operations research and cybernetics from the Institute of Systems Science, Academy of Mathematics and Systems Science, Chinese Academy of Sciences, Beijing, China, in 2016. She is currently a Professor with the Department of Control Science and Engineering, Tongji University, Shanghai, China. She was a postdoctoral fellow with the Department of Industrial and Manufacturing Engineering, Pennsylvania State University from 2016 to 2019. Her research interests include stochastic approximation, optimization and game-theoretic problems in networked regimes complicated by uncertainty.
			\end{footnotesize}
	\end{IEEEbiography}
\vspace{-31pt}
\begin{IEEEbiography}[{\includegraphics[width=1in,height=1.2in,clip,keepaspectratio]{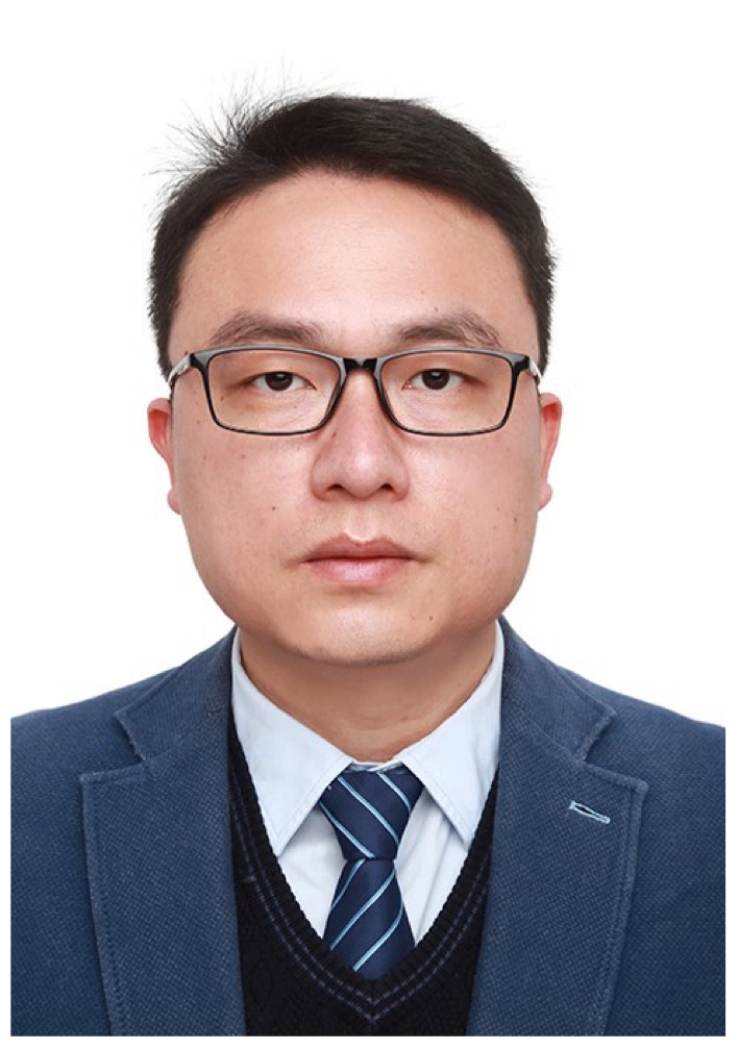}}]{Guanghui Wen}\begin{footnotesize}
 (Senior Member, IEEE) received the Ph.D. degree in mechanical systems and control from Peking University, Beijing, China, in 2012.
	Currently, he is a Young Endowed Chair Professor with the Department of Systems Science, Southeast University, Nanjing, China. His current research interests include autonomous intelligent systems, complex networked systems, distributed control and optimization, resilient control, and distributed reinforcement learning.
	Prof. Wen was the recipient of the National Science Fund for Distinguished Young Scholars, and Australian Research Council Discovery Early Career Researcher Award. He is a reviewer for American Mathematical Review and is an active reviewer for many journals. He currently serves as an Associate Editor of the IEEE Transactions on Industrial Informatics, the IEEE Transactions on Neural Networks and Learning Systems, the IEEE Transactions on Intelligent Vehicles, the IEEE Journal of Emerging and Selected Topics in Industrial Electronics, the IEEE Transactions on Systems, Man and Cybernetics: Systems, the IEEE Open Journal of the Industrial Electronics Society, and the Asian Journal of Control. Prof. Wen has been named a Highly Cited Researcher by Clarivate Analytics since 2018.
		\end{footnotesize}
\end{IEEEbiography}
\vspace{-35pt}

\begin{IEEEbiography}[{\includegraphics[width=1in,height=1.2in,clip,keepaspectratio]{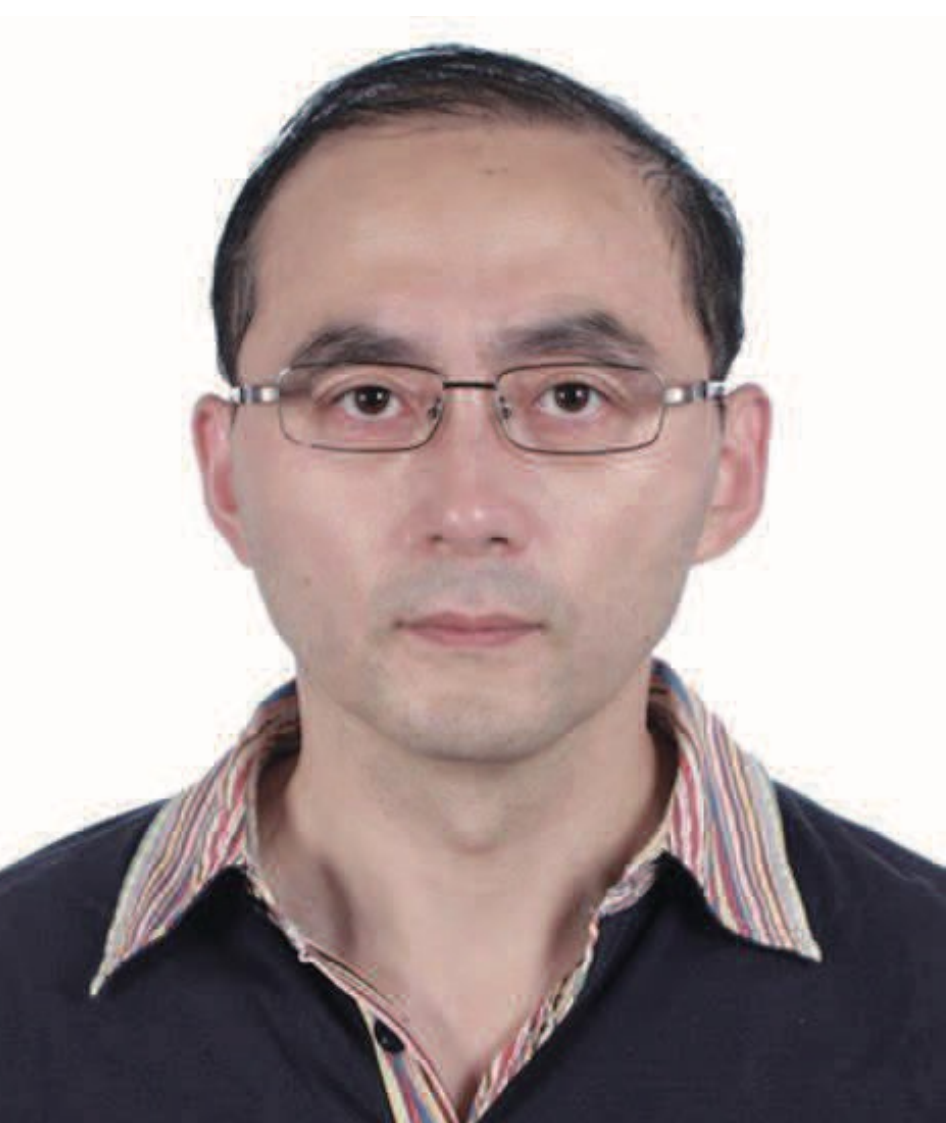}}]{Yiguang Hong} \begin{footnotesize}
(Fellow, IEEE) received the B.S. and M.S. degrees from Peking University, Beijing, China, and the Ph.D. degree from the Chinese Academy of Sciences (CAS), Beijing, China. He is currently a professor and deputy director of the Shanghai Research Institute for Intelligent Autonomous Systems, Tongji University, Shanghai, China, and an adjunct professor of Academy of Mathematics and Systems Science, CAS. He served as the director of the Key Lab of Systems and Control, CAS, and the director of the Information Technology Division, National Center for Mathematics and Interdisciplinary Sciences, CAS. His current research interests include nonlinear control, multi-agent systems, distributed optimization and game, machine learning, and social networks.

Dr. Hong is a Fellow of IEEE, a Fellow of the Chinese Association for Artificial Intelligence (CAAI), and a Fellow of the Chinese Association of Automation (CAA). Moreover, he was a recipient of the Guan Zhaozhi Award at the Chinese Control Conference, the Young Author Prize of the IFAC World Congress, the Young Scientist Award of CAS, the Youth Award for Science and Technology of China, and the National Natural Science Prize of China. Additionally, he was a Board of Governor of IEEE Control Systems Society. He serves or served as the Associate Editor for many journals, including the IEEE Transactions on Automatic Control, IEEE Transactions on Control of Network Systems, and IEEE Control System Magazine.
		\end{footnotesize}
\end{IEEEbiography}

	\end{document}